\newtheorem{theorem}{Theorem}[section]
\theoremstyle{plain}
\newtheorem{lemma}[theorem]{Lemma}
\newtheorem{proposition}[theorem]{Proposition}
\newtheorem{remark}[theorem]{Remark}
\numberwithin{equation}{section}
\begin{document}
\title[A static supercritical KGMP system on a closed manifold]{Semiclassical states for a static supercritical Klein-Gordon-Maxwell-Proca
system on a closed Riemannian manifold}
\author{M\'{o}nica Clapp}
\address{Instituto de Matem\'{a}ticas, Universidad Nacional Aut\'{o}noma de M\'{e}xico,
Circuito Exterior CU, 04510 M\'{e}xico DF, Mexico}
\email{monica.clapp@im.unam.mx}
\author{Marco Ghimenti}
\address{Dipartimento di Matematica Applicata, Universit\`{a} di Pisa, Via Buonarroti
1/c 56127, Pisa, Italy}
\email{marco.ghimenti@dma.unipi.it}
\author{Anna Maria Micheletti}
\address{Dipartimento di Matematica Applicata, Universit\`{a} di Pisa, Via Buonarroti
1/c 56127, Pisa, Italy}
\email{a.micheletti@dma.unipi.it}
\thanks{Research supported by CONACYT grant 129847 and UNAM-DGAPA-PAPIIT grant
IN106612 (Mexico), and MIUR projects PRIN2009: ``Variational and Topological
Methods in the Study of Nonlinear Phenomena" and ``Critical Point Theory and
Perturbative Methods for Nonlinear Differential Equations". (Italy).}
\subjclass[2010]{}
\date{\today}
\keywords{Static Klein-Gordon-Maxwell-Proca system, semiclassical states, Riemannian
manifold, supercritical nonlinearity, warped product, harmonic morphism,
Lyapunov-Schmidt reduction}
\maketitle

\begin{abstract}
We establish the existence of semiclassical states for a nonlinear
Klein-Gordon-Maxwell-Proca system in static form, with Proca mass $1,$ on a
closed Riemannian manifold.

Our results include manifolds of arbitrary dimension and allow supercritical
nonlinearities. In particular, we exhibit a large class of $3$-dimensional
manifolds on which the system has semiclassical solutions for every exponent
$p\in(2,\infty).$ The solutions we obtain concentrate at closed submanifolds
of positive dimension as the singular perturbation parameter goes to cero.

\end{abstract}

\section{Introduction}

Let $(\mathfrak{M},\mathfrak{g})$ be a closed (i.e. compact and without
boundary) smooth Riemannian manifold of dimension $m\geq2$. Given real numbers
$\varepsilon>0,$ $q>0,$ $\omega\in\mathbb{R}$ and $p\in(2,\infty),$ and a
real-valued $\mathcal{C}^{1}$-function $\alpha$ such that $\alpha
(x)>\omega^{2}$ on $\mathfrak{M}$, we consider the system
\begin{equation}
\left\{
\begin{array}
[c]{cc}%
-\varepsilon^{2}\Delta_{\mathfrak{g}}\mathfrak{u}+\alpha(x)\mathfrak{u}%
=\mathfrak{u}^{p-1}+\omega^{2}(q\mathfrak{v}-1)^{2}\mathfrak{u} & \text{on
}\mathfrak{M}\text{,}\\
-\Delta_{\mathfrak{g}}\mathfrak{v}+(1+q^{2}\mathfrak{u}^{2})\mathfrak{v}%
=q\mathfrak{u}^{2} & \text{on }\mathfrak{M}\text{,}\\
\mathfrak{u},\mathfrak{v}\in H_{\mathfrak{g}}^{1}(\mathfrak{M}),\text{\quad
}\mathfrak{u},\mathfrak{v}>0. &
\end{array}
\right.  \label{eq:P-1}%
\end{equation}
The space $H_{\mathfrak{g}}^{1}(\mathfrak{M})$ is the completion of
$\mathcal{C}^{\infty}(\mathfrak{M})$ with respect to the norm defined by
$\Vert v\Vert_{\mathfrak{g}}^{2}:=\int_{\mathfrak{M}}(\left\vert
\nabla_{\mathfrak{g}}v\right\vert ^{2}+v^{2})d\mu_{\mathfrak{g}}$.

Solutions to this system correspond to standing waves of a
Klein-Gordon-Maxwell-Proca (KGMP) system in static form (i.e. one in which the
external Proca field is time-independent) with Proca mass $1$.

KGMP-systems are massive versions of the more classical electrostatic
Klein-Gordon-Maxwell (KGM)\ systems: KGM-systems are KGMP-systems with Proca
mass $0,$ i.e. the second equation in (\ref{eq:P-1})\ is replaced by
\[
-\Delta_{\mathfrak{g}}\mathfrak{v}+q^{2}\mathfrak{u}^{2}\mathfrak{v}%
=q\mathfrak{u}^{2}.
\]
Note that $\mathfrak{v}=1/q$ solves this last equation and reduces the
KGM-system to a single Schr\"{o}dinger equation in $\mathfrak{u}.$ So for the
system on a closed manifold the Proca formalism is more interesting and more
appropriate. We refer to \cite{HW}\ for a detailed discussion on KGMP-systems
and their physical meaning.

For $\varepsilon=1$ existence of solutions to system (\ref{eq:P-1}), which are
stable with respect to the phase $\omega,$ was established by Druet and Hebey
\cite{DH} and Hebey and Truong \cite{HT} for manifolds of dimension $m=3$ and
$4,$ and subcritical ($2<p<\frac{2m}{m-2}$) or critical ($p=\frac{2m}{m-2}%
$)\ nonlinearities, under certain assumptions. For critical systems in
dimension $3$\ Hebey and Wei \cite{HW} showed the existence of standing waves
with multispike amplitudes, which are unstable with respect to the phase, i.e.
they blow up with $k$ singularities as the phase $\omega$ aproaches some phase
$\omega_{0}.$

Here we are interested in semiclassical states, i.e. in solutions to system
(\ref{eq:P-1})\ for $\varepsilon$ small. The existence of semiclassical states
for similar systems in flat domains $\Omega$ in $\mathbb{R}^{m}$ has been
investigated e.g. in \cite{DW1,DW2,R}. On closed $3$-dimensional manifolds,
the existence of semiclassical states to system (\ref{eq:P-1}), which
concentrate at a single point as $\varepsilon\rightarrow0$, was established in
\cite{GMkg} and \cite{GMP} for subcritical exponents $p\in(2,6).$

The results we present in this paper apply to manifolds of arbitrary dimension
and include supercritical nonlinearities $p>2_{m}^{\ast},$ where $2_{m}^{\ast
}:=\frac{2m}{m-2}$ is the critical Sobolev exponent in dimension $m\geq3$ and
$2_{2}^{\ast}:=\infty.$ In particular, we shall exhibit a large class of
$3$-dimensional manifolds on which the system (\ref{eq:P-1}) has semiclassical
solutions for every exponent $p\in(2,\infty).$ The solutions $\mathfrak{u}$ we
obtain concentrate at closed submanifolds of $\mathfrak{M}$ of positive
dimension. Moreover, for fixed $\varepsilon$, they are stable with respect to
the phase in the sense of \cite{DH}.

Our approach consists in reducing system (\ref{eq:P-1}) to a system of a
similar type on a manifold $M$ of lower dimension but with the same exponent
$p.$ This way, if $n:=\dim M<\dim\mathfrak{M}=:m$ and $p\in\lbrack2_{m}^{\ast
},2_{n}^{\ast}),$ then $p$ is subcritical for the new system but it is
critical or supercritical for the original one. Moreover, solutions of the new
system which concentrate at a point in $M$ as $\varepsilon\rightarrow0$ will
give rise to solutions of the original system concentrating at a closed
submanifold of $\mathfrak{M}$\ of dimension $m-n$ as $\varepsilon
\rightarrow0.$

This approach was introduced by Ruf and Srikanth in \cite{RS}, where a Hopf
map is used to obtain the reduction. Reductions may also be performed by means
of other maps which preserve the Laplace-Beltrami operator, or by considering
warped products, or by a combination of both, see \cite{CGM, RS2} and the
references therein. We describe these reductions in the following two subsections.

\subsection{Warped products}

If $(M,g)$ and $(N,h)$ are closed smooth Riemannian manifolds of dimensions
$n$ and $k$ respectively, and $f:M\rightarrow(0,\infty)$ is a $\mathcal{C}%
^{1}$-map, the \emph{warped product} $M\times_{f^{2}}N$ is the cartesian
product $M\times N$ equipped with the Riemannian metric $\mathfrak{g}%
:=g+f^{2}h$.

For example, if $M$ is a closed Riemannian submanifold of $\mathbb{R}^{\ell
}\times\left(  0,\infty\right)  ,$ then%
\[
\mathfrak{M}:=\{(y,z)\in\mathbb{R}^{\ell}\times\mathbb{R}^{k+1}:(y,\left\vert
z\right\vert )\in M\},
\]
with the induced euclidian metric, is isometric to the warped product
$M\times_{f^{2}}\mathbb{S}^{k}$, where $\mathbb{S}^{k}$ is the standard
$k$-sphere and $f(x_{1},\ldots,x_{\ell+1})=x_{\ell+1}.$

Let $\pi_{M}:M\times_{f^{2}}N\rightarrow M$ be the projection. A
straightforward computation gives the following result, cf. \cite{DLD}.

\begin{proposition}
\label{prop:wp}Let $\beta:M\rightarrow\mathbb{R}$ and $\alpha=\beta\circ
\pi_{M}.$ Then $u_{\varepsilon},v_{\varepsilon}:M\rightarrow\mathbb{R}$ solve
\begin{equation}
\left\{
\begin{array}
[c]{cc}%
-\varepsilon^{2}\text{\emph{div}}_{g}\left(  f^{k}(x)\nabla_{g}u\right)
+f^{k}(x)\beta(x)u=f^{k}(x)u^{p-1}+\omega^{2}f^{k}(x)(qv-1)^{2}u & \text{on
}M,\\
-\text{\emph{div}}_{g}\left(  f^{k}(x)\nabla_{g}v\right)  +f^{k}(x)\left(
1+qu^{2}\right)  v=qf^{k}(x)u^{2} & \text{on }M,
\end{array}
\right.  \label{eq:div}%
\end{equation}
iff $\mathfrak{u}_{\varepsilon}:=u_{\varepsilon}\circ\pi_{M},$ $\mathfrak{v}%
_{\varepsilon}:=v_{\varepsilon}\circ\pi_{M}:M\times_{f^{2}}N\rightarrow
\mathbb{R}$\ solve
\begin{equation}
\left\{
\begin{array}
[c]{cc}%
-\varepsilon^{2}\Delta_{\mathfrak{g}}\mathfrak{u}+\alpha(x)\mathfrak{u}%
=\mathfrak{u}^{p-1}+\omega^{2}(q\mathfrak{v}-1)^{2}\mathfrak{u} & \text{on
}M\times_{f^{2}}N,\\
-\Delta_{\mathfrak{g}}\mathfrak{v}+\left(  1+q\mathfrak{u}^{2}\right)
\mathfrak{v}=q\mathfrak{u}^{2} & \text{on }M\times_{f^{2}}N.
\end{array}
\right.  \label{eq:wp}%
\end{equation}

\end{proposition}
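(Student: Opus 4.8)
The plan is to verify Proposition~\ref{prop:wp} by a direct computation of the Laplace--Beltrami operator $\Delta_{\mathfrak{g}}$ on the warped product $M\times_{f^{2}}N$ acting on functions that are pulled back from the base $M$ via $\pi_{M}$. First I would recall the coordinate expression for $\Delta_{\mathfrak{g}}$: if $(x^{1},\dots,x^{n})$ are local coordinates on $M$ and $(t^{1},\dots,t^{k})$ are local coordinates on $N$, then with $\mathfrak{g}=g+f^{2}h$ one has $\sqrt{\det\mathfrak{g}}=f^{k}\sqrt{\det g}\,\sqrt{\det h}$, and the inverse metric is block diagonal with blocks $g^{ij}$ and $f^{-2}h^{ab}$. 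Plugging this into the general formula $\Delta_{\mathfrak{g}}\mathfrak{w}=\tfrac{1}{\sqrt{\det\mathfrak{g}}}\partial_{\mu}\bigl(\sqrt{\det\mathfrak{g}}\,\mathfrak{g}^{\mu\nu}\partial_{\nu}\mathfrak{w}\bigr)$ and using $\mathfrak{w}=w\circ\pi_{M}$, so that all $t$-derivatives vanish, the $N$-block contributes nothing and the $M$-block gives $\tfrac{1}{f^{k}\sqrt{\det g}}\partial_{i}\bigl(f^{k}\sqrt{\det g}\,g^{ij}\partial_{j}w\bigr)=\tfrac{1}{f^{k}}\operatorname{div}_{g}(f^{k}\nabla_{g}w)$. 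Hence $\Delta_{\mathfrak{g}}(w\circ\pi_{M})=\bigl(\tfrac{1}{f^{k}}\operatorname{div}_{g}(f^{k}\nabla_{g}w)\bigr)\circ\pi_{M}$.

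Next I would substitute this identity into system~\eqref{eq:wp}. Since $\alpha=\beta\circ\pi_{M}$ and $\mathfrak{u}=u\circ\pi_{M}$, $\mathfrak{v}=v\circ\pi_{M}$, every term in~\eqref{eq:wp} is the pullback under $\pi_{M}$ of the corresponding term of the system
\begin{equation*}
\left\{
\begin{array}{c}
-\varepsilon^{2}\tfrac{1}{f^{k}}\operatorname{div}_{g}(f^{k}\nabla_{g}u)+\beta(x)u=u^{p-1}+\omega^{2}(qv-1)^{2}u,\\
-\tfrac{1}{f^{k}}\operatorname{div}_{g}(f^{k}\nabla_{g}v)+(1+qu^{2})v=qu^{2},
\end{array}
\right.
\end{equation*}
on $M$. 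Because $\pi_{M}$ is surjective, the pullback map $w\mapsto w\circ\pi_{M}$ is injective on functions, so~\eqref{eq:wp} holds for $\mathfrak{u},\mathfrak{v}$ if and only if this last system holds for $u,v$; multiplying both equations by $f^{k}(x)$ turns it into~\eqref{eq:div}. For the ``only if'' direction one also notes conversely that if $\mathfrak{u},\mathfrak{v}$ solve~\eqref{eq:wp} they must be constant along the $N$-fibers — but here this is not needed, since we are only asserting the equivalence for functions of the special product form $u\circ\pi_{M},v\circ\pi_{M}$, so the two statements are literally the same equation read on $M$ versus on $M\times_{f^{2}}N$.

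Finally I would check the regularity and function-space bookkeeping: $u_{\varepsilon},v_{\varepsilon}\in H_{g}^{1}(M)$ with $u_{\varepsilon},v_{\varepsilon}>0$ correspond exactly to $\mathfrak{u}_{\varepsilon},\mathfrak{v}_{\varepsilon}\in H_{\mathfrak{g}}^{1}(M\times_{f^{2}}N)$ with $\mathfrak{u}_{\varepsilon},\mathfrak{v}_{\varepsilon}>0$, because $\int_{M\times_{f^{2}}N}(|\nabla_{\mathfrak{g}}\mathfrak{w}|^{2}+\mathfrak{w}^{2})\,d\mu_{\mathfrak{g}}=\operatorname{vol}_{h}(N)\int_{M}(|\nabla_{g}w|^{2}+w^{2})f^{k}\,d\mu_{g}$ with $f$ bounded away from $0$ and $\infty$ by compactness, so the norms are equivalent up to the weight $f^{k}$, and positivity and sign are preserved under composition with the surjection $\pi_{M}$. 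I do not expect a genuine obstacle here — as the text says, this is ``a straightforward computation''; the only point demanding mild care is the correct handling of the warping factor $f^{k}$ in the divergence form of the operator, and confirming that the equivalence is meant fiberwise-constant-to-base rather than a general statement about all solutions on the warped product.
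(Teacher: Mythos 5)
Your computation is correct and is precisely the "straightforward computation" the paper alludes to without writing out (it only cites \cite{DLD}): the block-diagonal form of $\mathfrak{g}=g+f^{2}h$ gives $\Delta_{\mathfrak{g}}(w\circ\pi_{M})=\bigl(f^{-k}\operatorname{div}_{g}(f^{k}\nabla_{g}w)\bigr)\circ\pi_{M}$, after which multiplying by $f^{k}$ converts \eqref{eq:wp} into \eqref{eq:div}. Nothing is missing; your remark that the equivalence is only asserted for functions of the pulled-back form $w\circ\pi_{M}$ is exactly how the proposition is stated.
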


Note that the exponent $p$ is the same for both systems. So if $p\in
(2_{n+k}^{\ast},2_{n}^{\ast})$ then $p$ is subcritical for (\ref{eq:div}) but
supercritical for (\ref{eq:wp}). Moreover, if the functions $u_{\varepsilon}$
concentrate at a point $\xi_{0}\in M$ as $\varepsilon\rightarrow0$, then the
functions $\mathfrak{u}_{\varepsilon}:=u_{\varepsilon}\circ\pi_{M}$
concentrate at the submanifold $\pi_{M}^{-1}(\xi_{0})\cong(N,f^{2}(\xi_{0})h)$
as $\varepsilon\rightarrow0$.

\subsection{Harmonic morphisms}

\label{subsec:hm}Let $(\mathfrak{M},\mathfrak{g})$ and $(M,g)$ be closed
Riemannian manifolds of dimensions $m$ and $n$ respectively. A \emph{harmonic
morphism} is a horizontally conformal submersion $\pi:\mathfrak{M}\rightarrow
M$ with dilation $\lambda:\mathfrak{M}\rightarrow\lbrack0,\infty)$ which
satisfies%
\begin{equation}
(n-2)\mathcal{H}(\nabla_{\mathfrak{g}}\ln\lambda)+(m-n)\kappa^{\mathcal{V}}=0,
\label{eq:hm}%
\end{equation}
where $\kappa^{\mathcal{V}}$ is the mean curvature of the fibers of $\pi$ and
$\mathcal{H}$ is the projection of the tangent space of $\mathfrak{M}$ onto
the space orthogonal to the fibers, see \cite{bw}.

So for $n=2$ a harmonic morphism is just a horizontally conformal submersion
$\pi:\mathfrak{M}\rightarrow M$ with minimal fibers. Typical examples are the
Hopf fibration $\mathbb{S}^{3}\rightarrow\mathbb{S}^{2}$ whose fiber is
$\mathbb{S}^{1}$, and the induced fibration $\mathbb{R}P^{3}\rightarrow
\mathbb{S}^{2}$ with fiber $\mathbb{R}P^{1}$, see \cite[Example 2.4.15]{bw}.
They are, in fact, Riemannian submersions (i.e. $\lambda\equiv1$).

Harmonic morphisms preserve the Laplace-Beltrami operator, i.e.%
\[
\Delta_{\mathfrak{g}}(u\circ\pi)=\lambda^{2}\left[  (\Delta_{g}u)\circ
\pi\right]
\]
for every $\mathcal{C}^{2}$-function $u:M\rightarrow\mathbb{R}.$ This fact
yields the following result.

\begin{proposition}
\label{prop:hm}Assume there exist $\beta:M\rightarrow\mathbb{R}$ and
$\mu:M\rightarrow(0,\infty)$ such that $\beta\circ\pi=\alpha$ and $\mu\circ
\pi=\lambda^{2}$. Then $u_{\varepsilon},v_{\varepsilon}:M\rightarrow
\mathbb{R}$ solve the system
\begin{equation}
\left\{
\begin{array}
[c]{cc}%
-\varepsilon^{2}\Delta_{{g}}u+\frac{\beta(x)}{\mu(x)}u=\frac{1}{\mu(x)}%
u^{p-1}+\frac{\omega^{2}}{\mu(x)}(qv-1)^{2}u & \text{on }M,\\
-\Delta_{g}v+\frac{1}{\mu(x)}\left(  1+qu^{2}\right)  v=\frac{q}{\mu(x)}u^{2}
& \text{on }M,
\end{array}
\right.  \label{eq:M}%
\end{equation}
iff $\mathfrak{u}_{\varepsilon}:=u_{\varepsilon}\circ\pi_{M},$ $\mathfrak{v}%
_{\varepsilon}:=v_{\varepsilon}\circ\pi_{M}:\mathfrak{M}\rightarrow\mathbb{R}$
solve the system
\begin{equation}
\left\{
\begin{array}
[c]{cc}%
-\varepsilon^{2}\Delta_{\mathfrak{g}}\mathfrak{u}+\alpha(x)\mathfrak{u}%
=\mathfrak{u}^{p-1}+\omega^{2}(q\mathfrak{v}-1)^{2}\mathfrak{u} & \text{on
}\mathfrak{M},\\
-\Delta_{\mathfrak{g}}\mathfrak{v}+\left(  1+q\mathfrak{u}^{2}\right)
\mathfrak{v}=q\mathfrak{u}^{2} & \text{on }\mathfrak{M}.
\end{array}
\right.  \label{eq:Mfrac}%
\end{equation}

\end{proposition}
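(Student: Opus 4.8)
The plan is to verify the equivalence by a direct computation, exploiting the single identity that characterizes harmonic morphisms, namely $\Delta_{\mathfrak{g}}(u\circ\pi)=\lambda^{2}\bigl[(\Delta_{g}u)\circ\pi\bigr]$ for every $\mathcal{C}^{2}$-function $u$ on $M$. Suppose first that $u_{\varepsilon},v_{\varepsilon}$ solve \eqref{eq:M}. Set $\mathfrak{u}_{\varepsilon}:=u_{\varepsilon}\circ\pi$ and $\mathfrak{v}_{\varepsilon}:=v_{\varepsilon}\circ\pi$. Applying the morphism identity to the first equation of \eqref{eq:M} and composing with $\pi$, I obtain
\[
-\varepsilon^{2}\Delta_{\mathfrak{g}}\mathfrak{u}_{\varepsilon}
=-\varepsilon^{2}\lambda^{2}\bigl[(\Delta_{g}u_{\varepsilon})\circ\pi\bigr]
=\lambda^{2}\Bigl[\Bigl(-\tfrac{\beta}{\mu}u_{\varepsilon}+\tfrac{1}{\mu}u_{\varepsilon}^{p-1}+\tfrac{\omega^{2}}{\mu}(qv_{\varepsilon}-1)^{2}u_{\varepsilon}\Bigr)\circ\pi\Bigr].
\]
Now I use the hypotheses $\beta\circ\pi=\alpha$ and $\mu\circ\pi=\lambda^{2}$: since $\mu$ is strictly positive on $M$, the function $(1/\mu)\circ\pi=1/\lambda^{2}$ makes sense on $\mathfrak{M}$, and multiplying $\lambda^{2}$ against each $1/\mu$-factor cancels it. Because composition with $\pi$ is an algebra homomorphism (it respects products and powers), $(u_{\varepsilon}^{p-1})\circ\pi=\mathfrak{u}_{\varepsilon}^{p-1}$ and $((qv_{\varepsilon}-1)^{2}u_{\varepsilon})\circ\pi=(q\mathfrak{v}_{\varepsilon}-1)^{2}\mathfrak{u}_{\varepsilon}$, and $(\beta\circ\pi)\,\mathfrak{u}_{\varepsilon}=\alpha\,\mathfrak{u}_{\varepsilon}$. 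Collecting terms yields exactly the first equation of \eqref{eq:Mfrac}. The second equation is handled identically: apply the morphism identity to $-\Delta_{g}v_{\varepsilon}$, compose with $\pi$, multiply through by the surviving $\lambda^{2}=\mu\circ\pi$ to clear every $1/\mu$, and use that composition with $\pi$ commutes with products to get $-\Delta_{\mathfrak{g}}\mathfrak{v}_{\varepsilon}+(1+q\mathfrak{u}_{\varepsilon}^{2})\mathfrak{v}_{\varepsilon}=q\mathfrak{u}_{\varepsilon}^{2}$.

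For the converse, I run the same computation in reverse. Assume $\mathfrak{u}_{\varepsilon}=u_{\varepsilon}\circ\pi$ and $\mathfrak{v}_{\varepsilon}=v_{\varepsilon}\circ\pi$ solve \eqref{eq:Mfrac}. Since $\pi$ is a submersion it is surjective, so two functions on $M$ that agree after composing with $\pi$ must be equal; hence it suffices to show that the composition with $\pi$ of each equation in \eqref{eq:M} holds. Using the morphism identity to rewrite $\Delta_{\mathfrak{g}}\mathfrak{u}_{\varepsilon}$ as $\lambda^{2}\bigl[(\Delta_{g}u_{\varepsilon})\circ\pi\bigr]=(\mu\circ\pi)\bigl[(\Delta_{g}u_{\varepsilon})\circ\pi\bigr]$, substituting $\alpha=\beta\circ\pi$, and dividing the first equation of \eqref{eq:Mfrac} by $\lambda^{2}=\mu\circ\pi$ (legitimate because $\mu>0$), every term becomes the $\pi$-composition of the corresponding term in the first equation of \eqref{eq:M}; by surjectivity of $\pi$ that equation holds on $M$. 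The second equation follows the same way. One small regularity point to note: to legitimately apply the morphism identity one wants $u_{\varepsilon},v_{\varepsilon}\in\mathcal{C}^{2}$, but elliptic regularity applied to \eqref{eq:M} (respectively to \eqref{eq:Mfrac}, whose solutions are then lifts) upgrades $H^{1}$-solutions to smooth ones since $\beta,\mu$ are $\mathcal{C}^{1}$ and $\mu$ is bounded away from $0$, so this is not a genuine obstruction.

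There is essentially no hard analytic step here; the proposition is a bookkeeping identity and the only thing to be careful about is that the nonlinear terms — the powers $\mathfrak{u}^{p-1}$ and the coupling $(q\mathfrak{v}-1)^{2}\mathfrak{u}$, $q\mathfrak{u}^{2}$, $q^{2}\mathfrak{u}^{2}\mathfrak{v}$ — all pull back correctly, which is immediate because $u\mapsto u\circ\pi$ is a ring homomorphism on functions. The mild subtlety, and the only place the structure of \eqref{eq:hm} is implicitly used, is that the first equation of \eqref{eq:M} carries the conformal factor $1/\mu=1/\lambda^{2}$ on \emph{both} the potential term and the right-hand side in exactly the pattern that lets a single multiplication by $\lambda^{2}$ clear all of them simultaneously; this is what forces the particular form of \eqref{eq:M}, and verifying that this pattern is consistent with the equivalence is the one place where a reader should pause. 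Everything else is routine.
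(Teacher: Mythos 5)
Your proof is correct and is exactly the argument the paper intends: the paper derives Proposition \ref{prop:hm} directly from the identity $\Delta_{\mathfrak{g}}(u\circ\pi)=\lambda^{2}\left[(\Delta_{g}u)\circ\pi\right]$ together with $\mu\circ\pi=\lambda^{2}$ and $\beta\circ\pi=\alpha$, just as you do. The converse via surjectivity of the submersion and the regularity remark are fine; nothing is missing.
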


Again, if $p\in(2_{m}^{\ast},2_{n}^{\ast}),$ the system (\ref{eq:M}) is
subcritical and the system (\ref{eq:Mfrac}) is supercritical and, if the
functions $u_{\varepsilon}$ concentrate at a point $\xi_{0}\in M$ as
$\varepsilon\rightarrow0$, the functions $\mathfrak{u}_{\varepsilon
}:=u_{\varepsilon}\circ\pi_{M}$ concentrate at the $(m-n)$-dimensional
submanifold $\pi_{M}^{-1}(\xi_{0})\ $of $\mathfrak{M}$ as $\varepsilon
\rightarrow0.$

\subsection{The main result for the general system}

Propositions \ref{prop:wp} and \ref{prop:hm} suggest studying a more general KGMP-system.

Let $(M,g)$ be a closed Riemannian manifold of dimension $n=2$ or $3$,
$\ a,b,c\in\mathcal{C}^{1}(M,\mathbb{R})$ be strictly positive functions,
$\varepsilon,q\in(0,\infty),$ $p\in(2,2_{n}^{\ast}),$ and $\omega\in
\mathbb{R}$ be such that $a(x)>\omega^{2}b(x)$ on $M$. We consider the
subcritical system
\begin{equation}
\left\{
\begin{array}
[c]{cc}%
-\varepsilon^{2}\text{div}_{g}\left(  c(x)\nabla_{g}u\right)
+a(x)u=b(x)u^{p-1}+b(x)\omega^{2}(qv-1)^{2}u & \text{in }M,\\
-\text{div}_{g}\left(  c(x)\nabla_{g}v\right)  +b(x)(1+q^{2}u^{2})v=b(x)qu^{2}
& \text{in }M,\\
u,v\in H_{g}^{1}(M),\quad u,v>0. &
\end{array}
\right.  \label{eq:P}%
\end{equation}

\begin{theorem}
\label{thm:main}Let $K$ be a $\mathcal{C}^{1}$-stable critical set of the
function $\Gamma:M\rightarrow\mathbb{R}$ given by
\[
\Gamma(x):=\frac{c(x)^{\frac{n}{2}}a(x)^{\frac{p}{p-2}-\frac{n}{2}}%
}{b(x)^{\frac{2}{p-2}}}.
\]
Then, for $\varepsilon$ small enough, the system \emph{(\ref{eq:P})} has a
solution $(u_{\varepsilon},v_{\varepsilon})$ such that $u_{\varepsilon}$
concentrates at a point $\xi_{0}\in K$ as $\varepsilon\rightarrow0.$
\end{theorem}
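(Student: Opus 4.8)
The plan is to carry out a Lyapunov--Schmidt finite-dimensional reduction, in the spirit of \cite{GMkg,GMP} but adapted to the weighted operator $\mathrm{div}_g(c\nabla_g\cdot)$ and to $n\in\{2,3\}$. First I would dispose of the second equation: for every $u\in H^1_g(M)$ the linear elliptic equation $-\mathrm{div}_g(c\nabla_g v)+b(1+q^2u^2)v=bqu^2$ has a unique solution $v=\Psi(u)\in H^1_g(M)$; the maximum principle gives $0\le q\Psi(u)<1$, and $\Psi$ is of class $\mathcal C^1$ with $\|\Psi(u)\|_g\le C\min\{\|u\|_g^2,1\}$ together with the corresponding bound on $\Psi'(u)$. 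As in the standard reduction of such systems (cf.\ \cite{DH,GMkg}), the pair $(u,\Psi(u))$ solves (\ref{eq:P}) if and only if $u$ is a positive critical point of
\[
J_\varepsilon(u):=\tfrac12\int_M\bigl(\varepsilon^2c|\nabla_g u|^2+au^2\bigr)d\mu_g-\tfrac1p\int_M b|u|^p\,d\mu_g-\tfrac{\omega^2}{2}\int_M b(q\Psi(u)-1)^2u^2\,d\mu_g .
\]
On functions that concentrate at scale $\varepsilon$ one has $\Psi(u)\to0$, so the genuinely nonlocal part of the last term, and of its derivatives, is of lower order; the leading part of $J_\varepsilon$ is a Schr\"odinger-type energy (with effective linear coefficient $a-\omega^2 b$, positive by hypothesis).

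\emph{Ansatz and reduction.} Let $W$ be the unique positive radial solution of the fixed limit problem $-\Delta W+W=W^{p-1}$ on $\mathbb{R}^n$, which is nondegenerate: the kernel of $-\Delta+1-(p-1)W^{p-2}$ in $H^1(\mathbb{R}^n)$ is $\mathrm{span}\{\partial_1W,\dots,\partial_nW\}$. Freezing the coefficients at $\xi\in M$, rescaling $W$ to an amplitude and a width determined by $a(\xi),b(\xi),c(\xi),\omega$, transplanting it to $M$ through $g$-normal coordinates centred at $\xi$ and truncating, I obtain an approximate solution $W_{\varepsilon,\xi}\in H^1_g(M)$, depending smoothly on $\xi$, with $\|J_\varepsilon'(W_{\varepsilon,\xi})\|$ uniformly $o(\varepsilon^{n/2})$ in the dual norm. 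Set $K_{\varepsilon,\xi}:=\mathrm{span}\{\partial_{\xi_1}W_{\varepsilon,\xi},\dots,\partial_{\xi_n}W_{\varepsilon,\xi}\}$, with $\langle\cdot,\cdot\rangle_\varepsilon$-orthogonal complement $K_{\varepsilon,\xi}^\perp$, where $\langle u,w\rangle_\varepsilon:=\int_M(\varepsilon^2c\nabla_g u\cdot\nabla_g w+auw)\,d\mu_g$. By the nondegeneracy of $W$, the operator $P_{\varepsilon,\xi}^\perp J_\varepsilon''(W_{\varepsilon,\xi})P_{\varepsilon,\xi}^\perp$ is uniformly invertible on $K_{\varepsilon,\xi}^\perp$ for $\varepsilon$ small (the nonlocal terms being lower order do not destroy this), so the contraction mapping principle provides, for each $\xi$, a unique $\phi_{\varepsilon,\xi}\in K_{\varepsilon,\xi}^\perp$, of class $\mathcal C^1$ in $\xi$, with $P_{\varepsilon,\xi}^\perp J_\varepsilon'(W_{\varepsilon,\xi}+\phi_{\varepsilon,\xi})=0$ and $\|\phi_{\varepsilon,\xi}\|_\varepsilon=o(\varepsilon^{n/2})$ uniformly in $\xi$. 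As usual, if $\xi_\varepsilon$ is a critical point on $M$ of the reduced functional $\widetilde J_\varepsilon(\xi):=J_\varepsilon(W_{\varepsilon,\xi}+\phi_{\varepsilon,\xi})$, then $u_\varepsilon:=W_{\varepsilon,\xi_\varepsilon}+\phi_{\varepsilon,\xi_\varepsilon}$ is a critical point of $J_\varepsilon$.

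\emph{Expansion and conclusion.} Using the above estimates and the elementary scaling of the ground-state energy, I would expand, in $\mathcal C^1(M)$,
\[
\widetilde J_\varepsilon(\xi)=\varepsilon^n\bigl(C_0\,\Gamma(\xi)+o(1)\bigr),\qquad C_0:=\tfrac{p-2}{2p}\int_{\mathbb{R}^n}W^p>0 ,
\]
where the factor $\varepsilon^n$ is the volume element created by the rescaling, the point-dependent factor is, up to the universal constant $C_0$, the function $\Gamma$, and the $\omega$-dependent nonlocal contribution enters only in the $o(\varepsilon^n)$ remainder. Since $K$ is a $\mathcal C^1$-stable critical set of $\Gamma$, the $\mathcal C^1$-closeness of $\varepsilon^{-n}\widetilde J_\varepsilon$ to $C_0\Gamma$ forces, for $\varepsilon$ small, a critical point $\xi_\varepsilon$ of $\widetilde J_\varepsilon$ with $\mathrm{dist}_g(\xi_\varepsilon,K)\to0$. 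Then $(u_\varepsilon,v_\varepsilon):=(W_{\varepsilon,\xi_\varepsilon}+\phi_{\varepsilon,\xi_\varepsilon},\,\Psi(u_\varepsilon))$ solves (\ref{eq:P}); positivity of $u_\varepsilon$ follows by testing the equation against $u_\varepsilon^-$ and the strong maximum principle, whence $v_\varepsilon=\Psi(u_\varepsilon)>0$ and $q v_\varepsilon<1$; and, since $\phi_{\varepsilon,\xi_\varepsilon}$ is negligible compared with the transplanted bump $W_{\varepsilon,\xi_\varepsilon}$, the function $u_\varepsilon$ concentrates, as $\varepsilon\to0$, at any subsequential limit $\xi_0\in K$ of $\xi_\varepsilon$.

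\emph{Main obstacle.} The hard part will be the uniform-in-$\xi$ $\mathcal C^1$ control in the reduction: producing $\phi_{\varepsilon,\xi}$ with the correct decay rate, differentiating it in $\xi$, and — above all — showing that the nonlocal field $v=\Psi(u)$ affects $\widetilde J_\varepsilon$ and $\partial_\xi\widetilde J_\varepsilon$ only at order $o(\varepsilon^n)$, so that the leading term is the clean $C_0\Gamma(\xi)$ and its $\mathcal C^1$-stability transfers to $\widetilde J_\varepsilon$; this needs, in particular, pointwise Green-function bounds for $\Psi(W_{\varepsilon,\xi})$ on $M$. The Riemannian corrections coming from the use of normal coordinates are a secondary, standard difficulty, harmless at leading order because the profile concentrates at scale $\varepsilon$; the cases $n=2$ and $n=3$ are treated uniformly since $p\in(2,2_n^{\ast})$ keeps the reduced problem subcritical.
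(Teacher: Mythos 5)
Your proposal follows essentially the same route as the paper: the Benci--Fortunato reduction via the map $\Psi$ with $0<q\Psi(u)<1$, a Lyapunov--Schmidt scheme built on the rescaled nondegenerate ground state of the frozen-coefficient limit problem transplanted in normal coordinates, a contraction argument on the orthogonal complement of the kernel directions, and a $\mathcal{C}^1$-uniform expansion of the reduced energy equal to $\bigl(\tfrac12-\tfrac1p\bigr)\Gamma(\xi)\int_{\mathbb{R}^n}U^p+o(1)$ (after the paper's $\varepsilon^{-n}$ normalization), concluded by the $\mathcal{C}^1$-stability of $K$. The technical core you flag as the main obstacle --- showing that the nonlocal terms involving $\Psi$ enter both the fixed-point estimates and the reduced energy and its $\xi$-derivative only at order $o(1)$ --- is precisely what the paper carries out in Sections 3--5 (via integral estimates on $\Psi$ and $\Psi'$ rather than pointwise Green-function bounds), so the plan is correct and matches the paper's proof.
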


Recall that $K$ is a $\mathcal{C}^{1}$\emph{-stable critical set} of a
function $f\in\mathcal{C}^{1}(M,\mathbb{R})$ if $K\subset\left\{  x\in
M:\nabla_{g}f(x)=0\right\}  $ and for any $\mu>0$ there exists $\delta>0$ such
that, if $h\in\mathcal{C}^{1}(M,\mathbb{R})$ with
\[
\max_{d_{g}(x,K)\leq\mu}|f(x)-h(x)|+|\nabla_{g}f(x)-\nabla_{g}h(x)|\leq
\delta,
\]
then $h$ has a critical point $x_{0}$ with $d_{g}(x_{0},K)\leq\mu$. Here
$d_{g}$ denotes the geodesic distance associated to the Riemannian metric $g$.

\subsection{The main results for the KGMP-system}

Theorem \ref{thm:main}, together with Propositions \ref{prop:wp} and
\ref{prop:hm}, yields the following results.

\begin{theorem}
\label{cor:wp}Let $\mathfrak{M}$\ be the warped product $M\times_{f^{2}}N$ of
two closed Riemannian manifolds $(M,g)$ and $(N,h)$ with $n:=\dim M=2$ or $3.$
Set $k:=\dim N,$ and let $p\in(2,\infty)$ if $n=2$ and $p\in(2,6)$ if $n=3.$
Assume there exists $\beta\in\mathcal{C}^{1}(M,\mathbb{R})$ such that
$\alpha=\beta\circ\pi_{M}$ and let $K$ be a $\mathcal{C}^{1}$-stable critical
set for the function $\Gamma:=f^{k}\beta^{\frac{p}{p-2}-\frac{n}{2}}$ on $M.$
Then, for $\varepsilon$ small enough, the KGMP-system \emph{(\ref{eq:P-1})}
has a solution $(\mathfrak{u}_{\varepsilon},\mathfrak{v}_{\varepsilon})$ such
that $\mathfrak{u}_{\varepsilon}$ concentrates at the submanifold $\pi
_{M}^{-1}\left(  \xi_{0}\right)  \cong(N,f^{2}(\xi_{0})h)$ for some $\xi
_{0}\in K$ as $\varepsilon\rightarrow0$.
\end{theorem}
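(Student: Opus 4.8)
The plan is to read off Theorem~\ref{cor:wp} from Theorem~\ref{thm:main} by recognizing the reduced system of Proposition~\ref{prop:wp} as an instance of the general subcritical system (\ref{eq:P}). First I would apply Proposition~\ref{prop:wp}: since $\alpha=\beta\circ\pi_M$, the pair $(\mathfrak{u}_\varepsilon,\mathfrak{v}_\varepsilon)=(u_\varepsilon\circ\pi_M,v_\varepsilon\circ\pi_M)$ solves the KGMP-system (\ref{eq:P-1}) on $\mathfrak{M}=M\times_{f^2}N$ if and only if $(u_\varepsilon,v_\varepsilon)$ solves (\ref{eq:div}) on $M$. Hence it suffices to produce, for $\varepsilon$ small, a solution of (\ref{eq:div}) whose first component concentrates at a point of $K$.

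Next I would observe that (\ref{eq:div}) is of the form (\ref{eq:P}), with
\[
c(x)=b(x)=f(x)^k,\qquad a(x)=f(x)^k\beta(x).
\]
These are $\mathcal{C}^1$ functions, since $f\in\mathcal{C}^1(M,(0,\infty))$ and $\beta\in\mathcal{C}^1(M,\mathbb{R})$, and they are strictly positive, since $f>0$ and, evaluating $\alpha=\beta\circ\pi_M$ along a fiber of $\pi_M$, the standing assumption $\alpha>\omega^2$ on $\mathfrak{M}$ forces $\beta>\omega^2\ge0$ on $M$; the latter also yields the inequality $a=f^k\beta>\omega^2 f^k=\omega^2 b$ required in (\ref{eq:P}). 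Since $p\in(2,2_n^\ast)$ in both cases $n=2$ and $n=3$, the hypotheses of Theorem~\ref{thm:main} hold for (\ref{eq:div}). For these coefficients the exponent of $f^k$ in the function $\Gamma$ of Theorem~\ref{thm:main} equals $\tfrac n2+\bigl(\tfrac{p}{p-2}-\tfrac n2\bigr)-\tfrac2{p-2}=1$, whence
\[
\Gamma(x)=\frac{\bigl(f^k\bigr)^{n/2}\bigl(f^k\beta\bigr)^{\frac{p}{p-2}-\frac n2}}{\bigl(f^k\bigr)^{\frac2{p-2}}}=f(x)^k\,\beta(x)^{\frac{p}{p-2}-\frac n2},
\]
which is exactly the function $\Gamma=f^k\beta^{\frac{p}{p-2}-\frac n2}$ in the statement, so $K$ is a $\mathcal{C}^1$-stable critical set of this $\Gamma$.

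Theorem~\ref{thm:main} then provides, for $\varepsilon$ small, a solution $(u_\varepsilon,v_\varepsilon)$ of (\ref{eq:div}) with $u_\varepsilon$ concentrating at some $\xi_0\in K$ as $\varepsilon\to0$, and by Proposition~\ref{prop:wp} the lifted pair $(\mathfrak{u}_\varepsilon,\mathfrak{v}_\varepsilon)$ solves (\ref{eq:P-1}) on $M\times_{f^2}N$. It remains to upgrade the pointwise concentration of $u_\varepsilon$ at $\xi_0$ to concentration of $\mathfrak{u}_\varepsilon$ at the fiber $\pi_M^{-1}(\xi_0)$. Since $\mathfrak{u}_\varepsilon=u_\varepsilon\circ\pi_M$ is constant on every fiber and $\pi_M$ is a Riemannian submersion for the warped-product metric $\mathfrak{g}=g+f^2h$, the $\varepsilon$-rescaled bubble of $u_\varepsilon$ about $\xi_0$ pulls back to a rescaled profile of $\mathfrak{u}_\varepsilon$ about $\pi_M^{-1}(\xi_0)$, and this fiber, with the metric induced by $\mathfrak{g}$, is isometric to $(N,f^2(\xi_0)h)$. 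I expect this last transfer to be the only step requiring genuine care: one has to fix the precise meaning of ``concentration at a submanifold'' — say, via $\varepsilon$-rescaled profiles in Fermi coordinates around $\pi_M^{-1}(\xi_0)$ — and check that, after composition with $\pi_M$, it is equivalent to the point-concentration notion used in Theorem~\ref{thm:main}. The remaining content is the elementary algebra above together with routine bookkeeping.
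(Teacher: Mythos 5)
Your proposal is correct and is essentially the paper's own argument: the authors derive Theorem \ref{cor:wp} by exactly this route, identifying the reduced system (\ref{eq:div}) as the case $c=b=f^{k}$, $a=f^{k}\beta$ of (\ref{eq:P}), checking that $\Gamma$ collapses to $f^{k}\beta^{\frac{p}{p-2}-\frac{n}{2}}$, and invoking Theorem \ref{thm:main} together with Proposition \ref{prop:wp}. In fact you are somewhat more careful than the paper, which states the fiber-concentration transfer as an immediate consequence without formalizing the notion of concentration at a submanifold.
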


\begin{theorem}
\label{cor:hm}Assume there exist a closed Riemannian manifold $M$ with
$n:=\dim M=2$ or $3$ and a harmonic morphism $\pi:\mathfrak{M}\rightarrow M$
whose dilation $\lambda$ is such that $\mu\circ\pi=\lambda^{2}$. Assume
further that $\alpha=\beta\circ\pi$ with $\beta\in\mathcal{C}^{1}%
(M,\mathbb{R})$. Let $p\in(2,\infty)$ if $n=2$ and $p\in(2,6)$ if $n=3,$ and
let $K$ be a $\mathcal{C}^{1}$-stable critical set for the function
$\Gamma:=\beta^{\frac{p}{p-2}-\frac{n}{2}}\mu^{\frac{n}{2}-1}$ on $M$. Then,
for $\varepsilon$ small enough, the KGMP-system \emph{(\ref{eq:P-1})} has a
solution $(\mathfrak{u}_{\varepsilon},\mathfrak{v}_{\varepsilon})$ such that
$\mathfrak{u}_{\varepsilon}$ concentrates at the submanifold $\pi^{-1}\left(
\xi_{0}\right)  $ of $\mathfrak{M}$ for some $\xi_{0}\in K$ as $\varepsilon
\rightarrow0$.
\end{theorem}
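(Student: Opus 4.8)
This statement is a corollary of Theorem~\ref{thm:main} together with Proposition~\ref{prop:hm}, so the argument I have in mind is one of identification and bookkeeping rather than of new analysis. The first step is to invoke Proposition~\ref{prop:hm}: by hypothesis $\pi:\mathfrak{M}\to M$ is a harmonic morphism with dilation $\lambda$ satisfying $\mu\circ\pi=\lambda^{2}$, and $\alpha=\beta\circ\pi$, so a pair $(u_{\varepsilon},v_{\varepsilon})$ on $M$ solves system \eqref{eq:M} if and only if $(\mathfrak{u}_{\varepsilon},\mathfrak{v}_{\varepsilon}):=(u_{\varepsilon}\circ\pi,\,v_{\varepsilon}\circ\pi)$ solves \eqref{eq:Mfrac}, i.e.\ the KGMP-system \eqref{eq:P-1}. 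Hence it is enough to produce a positive solution of \eqref{eq:M} whose first component concentrates at a point of $K$ as $\varepsilon\to0$.

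The second step is to recognise \eqref{eq:M} as a particular case of the general system \eqref{eq:P}, obtained by the choice
\[
c(x):=1,\qquad b(x):=\frac{1}{\mu(x)},\qquad a(x):=\frac{\beta(x)}{\mu(x)} .
\]
Before applying Theorem~\ref{thm:main} I would check that its hypotheses hold for these data. Since $\mu>0$ on $M$, we have $\lambda^{2}=\mu\circ\pi>0$ on $\mathfrak{M}$, so $\pi$ is a submersion in the strict sense and $\mu$, and therefore $a$, $b$, $c$, are strictly positive and of class $\mathcal{C}^{1}$ (here I also use $\beta\in\mathcal{C}^{1}(M,\mathbb{R})$). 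A submersion from the compact manifold $\mathfrak{M}$ onto $M$ is surjective (its image is open and closed in $M$), so $\alpha>\omega^{2}$ on $\mathfrak{M}$ and $\alpha=\beta\circ\pi$ force $\beta>\omega^{2}$ on $M$, that is $a>\omega^{2}b$, and in particular $a,b>0$; finally $n=\dim M\in\{2,3\}$ and $p\in(2,2^{\ast}_{n})$, so $p$ is subcritical for \eqref{eq:P}. Thus \eqref{eq:P} with these coefficients is of the type covered by Theorem~\ref{thm:main}.

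The third step is to compute the function $\Gamma$ of Theorem~\ref{thm:main} for these coefficients and to match it with the one in the statement:
\[
\frac{c^{\,n/2}\,a^{\frac{p}{p-2}-\frac{n}{2}}}{b^{\frac{2}{p-2}}}
=\Bigl(\tfrac{\beta}{\mu}\Bigr)^{\frac{p}{p-2}-\frac{n}{2}}\mu^{\frac{2}{p-2}}
=\beta^{\frac{p}{p-2}-\frac{n}{2}}\,\mu^{\frac{2}{p-2}-\bigl(\frac{p}{p-2}-\frac{n}{2}\bigr)}
=\beta^{\frac{p}{p-2}-\frac{n}{2}}\,\mu^{\frac{n}{2}-1},
\]
where the last equality uses $\tfrac{2}{p-2}-\tfrac{p}{p-2}=-1$. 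This is precisely $\Gamma=\beta^{\frac{p}{p-2}-\frac{n}{2}}\mu^{\frac{n}{2}-1}$, so $K$ is a $\mathcal{C}^{1}$-stable critical set of the $\Gamma$ attached to \eqref{eq:P}. Theorem~\ref{thm:main} then provides, for $\varepsilon$ small, a solution $(u_{\varepsilon},v_{\varepsilon})$ of \eqref{eq:P} (equivalently \eqref{eq:M}) with $u_{\varepsilon}$ concentrating at some $\xi_{0}\in K$ as $\varepsilon\to0$. Transporting this back by Proposition~\ref{prop:hm}, the pair $(\mathfrak{u}_{\varepsilon},\mathfrak{v}_{\varepsilon})=(u_{\varepsilon}\circ\pi,\,v_{\varepsilon}\circ\pi)$ is a positive $H^{1}_{\mathfrak{g}}$-solution of \eqref{eq:P-1}, and the concentration of $u_{\varepsilon}$ at $\xi_{0}$ yields concentration of $\mathfrak{u}_{\varepsilon}$ at the fiber $\pi^{-1}(\xi_{0})$, a closed submanifold of $\mathfrak{M}$ of dimension $m-n$. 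The only points demanding care are the algebraic identification of the coefficients and the accompanying check of the admissibility conditions of Theorem~\ref{thm:main}---in particular, that surjectivity of $\pi$ is exactly what transfers the condition $\alpha>\omega^{2}$ to $\beta>\omega^{2}b$; all the genuine analytic content is already contained in Theorem~\ref{thm:main}.
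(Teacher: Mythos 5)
Your proposal is correct and is exactly the argument the paper intends: Theorem \ref{cor:hm} is stated as a direct consequence of Proposition \ref{prop:hm} and Theorem \ref{thm:main}, and your identification $c\equiv1$, $a=\beta/\mu$, $b=1/\mu$ together with the exponent computation $\frac{2}{p-2}-\bigl(\frac{p}{p-2}-\frac{n}{2}\bigr)=\frac{n}{2}-1$ supplies precisely the bookkeeping the paper leaves implicit. The additional checks (surjectivity of the submersion transferring $\alpha>\omega^{2}$ to $\beta>\omega^{2}$, positivity and regularity of the coefficients) are welcome details consistent with the paper's framework.
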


This last result applies, in particular, to the standard $3$-sphere
$\mathfrak{M}=\mathbb{S}^{3}$ and the real projective space $\mathfrak{M}%
=\mathbb{R}P^{3}$ for all $p\in(2,\infty)$ with $\mu=\lambda\equiv1,$ see
subsection \ref{subsec:hm}.

The rest of the paper is devoted to the proof of Theorem \ref{thm:main}. In
section \ref{sec:outline} we reduce the system to a single equation and give
the outline of the proof of Theorem \ref{thm:main}, which follows the
well-known Lyapunov-Schmidt reduction procedure. In section \ref{sec:redux} we
establish the Lyapunov-Schmidt reduction and in section \ref{sec:red-exp} we
derive the expansion of the reduced energy functional. Section \ref{sec:tech}%
\ is devoted to the proof of some technical results.

\section{Outline of the proof of Theorem \ref{thm:main}}

\label{sec:outline}

\subsection{Reduction to a single equation}

First, we reduce the system to a single equation. To overcome the problems
caused by the competition between $u$ and $v$, using an idea of Benci and
Fortunato \cite{BF}, we consider the map $\Psi:H_{g}^{1}(M)\rightarrow
H_{g}^{1}(M)$ defined by the equation
\begin{equation}
-\text{div}_{g}\left(  c(x)\nabla_{g}\Psi(u)\right)  +b(x)(1+q^{2}u^{2}%
)\Psi(u)=b(x)qu^{2}. \label{eq:e1}%
\end{equation}
It follows from standard variational arguments that $\Psi$ is well-defined in
$H_{g}^{1}(M)$.

Using the maximum principle and regularity theory it is not hard to prove
that
\begin{equation}
0<\Psi(u)<1/q\text{\qquad for all }u\in H_{g}^{1}(M).\label{psipos}%
\end{equation}
For the proofs of the following two lemmas we refer to \cite{DH}.

\begin{lemma}
\label{lem:e1}The map $\Psi:H_{g}^{1}(M)\rightarrow H_{g}^{1}(M)$ is of class
$\mathcal{C}^{1},$ and its differential $V_{u}:=\Psi^{\prime}(u)$ at $u$ is
defined by
\begin{equation}
-\text{\emph{div}}_{g}\left(  c(x)\nabla_{g}V_{u}[h]\right)  +b(x)\left(
1+q^{2}u^{2}\right)  V_{u}[h]=2b(x)qu(1-q\Psi(u))h \label{eq:e2}%
\end{equation}
for every $h\in H_{g}^{1}(M).$ Moreover,
\[
0\leq\Psi^{\prime}(u)[u]\leq\frac{2}{q}\text{\qquad for all }u\in H_{g}%
^{1}(M).
\]

\end{lemma}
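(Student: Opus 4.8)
The plan is to obtain $\Psi$ from the implicit function theorem in Hilbert space. For $u\in H_g^1(M)$ let $L_u$ denote the operator $w\mapsto-\text{div}_{g}(c(x)\nabla_g w)+b(x)(1+q^2u^2)w$, understood via the bounded bilinear form
\[
a_u(w,\varphi):=\int_M\bigl(c(x)\,\nabla_g w\cdot\nabla_g\varphi+b(x)(1+q^2u^2)\,w\varphi\bigr)\,d\mu_g .
\]
Since $b$ and $c$ are continuous and strictly positive on the compact manifold $M$, $a_u$ is bounded and coercive on $H_g^1(M)$, so by Lax--Milgram $L_u$ is an isomorphism with bounded inverse. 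Denoting by $B_u$ the functional $\varphi\mapsto\int_M b(x)q\,u^2\varphi\,d\mu_g$ (bounded on $H_g^1(M)$ because $H_g^1(M)\hookrightarrow L^6(M)$ when $n=2,3$), equation \emph{(\ref{eq:e1})} reads $L_u\Psi(u)=B_u$, that is, $\Psi(u)=L_u^{-1}B_u$, which re-establishes that $\Psi$ is well defined.

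Next I would set $G(u,w):=L_uw-B_u$ and check that $G$ is a $\mathcal{C}^1$ map from $H_g^1(M)\times H_g^1(M)$ into the dual $H_g^1(M)^{\ast}$ (equivalently into $H_g^1(M)$, after Riesz). Its only terms that are nonlinear in $(u,w)$ come from $\int_M b q^2 u^2 w\varphi\,d\mu_g$ and $\int_M b q\, u^2\varphi\,d\mu_g$, which are polynomial in $(u,w)$; using the Sobolev embeddings available in dimension $n\le 3$ and the boundedness of $b$, one verifies that these define $\mathcal{C}^1$ maps, so that $\partial_w G(u,w)=L_u$ and
\[
\langle\partial_u G(u,w)[h],\varphi\rangle=\int_M\bigl(2b(x)q^2u\,w\,h-2b(x)q\,u\,h\bigr)\varphi\,d\mu_g .
\]
Since $\partial_w G(u,w)=L_u$ is invertible for every $u$, the implicit function theorem applies to $G(u,w)=0$ at each point $(u,\Psi(u))$; because $L_uw=B_u$ has a unique solution, the local branch coincides with $\Psi$, so $\Psi\in\mathcal{C}^1$ with $L_u\bigl(\Psi'(u)[h]\bigr)=-\partial_u G(u,\Psi(u))[h]$, which is exactly \emph{(\ref{eq:e2})} for $V_u:=\Psi'(u)$.

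It remains to estimate $V_u[u]$. Putting $h=u$ in \emph{(\ref{eq:e2})} gives
\[
-\text{div}_{g}\bigl(c(x)\nabla_g V_u[u]\bigr)+b(x)(1+q^2u^2)\,V_u[u]=2b(x)q\,u^2\bigl(1-q\Psi(u)\bigr),
\]
whose right-hand side is $\ge 0$ by \emph{(\ref{psipos})}; testing this equation against the negative part of $V_u[u]$ and using the coercivity of $a_u$ forces $V_u[u]\ge 0$. For the upper bound, the constant $2/q$ satisfies $-\text{div}_{g}(c(x)\nabla_g(2/q))+b(x)(1+q^2u^2)(2/q)=\tfrac{2b(x)}{q}+2b(x)q\,u^2$, hence $w:=\tfrac{2}{q}-V_u[u]$ satisfies
\[
-\text{div}_{g}\bigl(c(x)\nabla_g w\bigr)+b(x)(1+q^2u^2)\,w=\tfrac{2b(x)}{q}+2b(x)q^2u^2\,\Psi(u)\ \ge\ 0 ,
\]
and the same weak-maximum-principle argument gives $w\ge 0$, i.e. $V_u[u]\le\tfrac{2}{q}$.

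The computations above are routine; the one point requiring care is the verification that $G$ is $\mathcal{C}^1$, i.e. the $\mathcal{C}^1$-differentiability of the quadratic and cubic superposition operators appearing in $G$, which is exactly where the restriction $n\le 3$ enters, through the Sobolev embedding $H_g^1(M)\hookrightarrow L^4(M)$. As this is classical (cf. \cite{DH}), I would dispatch it briefly.
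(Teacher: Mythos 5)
Your proof is correct; the paper gives no argument for this lemma, deferring entirely to Druet--Hebey \cite{DH}, and your route --- Lax--Milgram for the invertibility of $L_u$, the implicit function theorem applied to $G(u,w)=L_uw-B_u$ for the $\mathcal{C}^{1}$ statement and the formula \emph{(\ref{eq:e2})}, then the weak maximum principle applied to $V_u[u]$ and to $\tfrac{2}{q}-V_u[u]$ for the two bounds --- is precisely the standard argument carried out there. I see no gaps.
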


\begin{lemma}
\label{lem:e2}The map $\Theta:H_{g}^{1}(M)\rightarrow\mathbb{R}$ given by
\[
\Theta(u):=\frac{1}{2}\int_{M}b(x)(1-q\Psi(u))u^{2}d\mu_{g}%
\]
is of class $\mathcal{C}^{1}$ and
\[
\Theta^{\prime}(u)[h]=\int_{M}b(x)(1-q\Psi(u))^{2}uh\,d\mu_{g}\text{\qquad for
all }u,h\in H_{g}^{1}(M).
\]

\end{lemma}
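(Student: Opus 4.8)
The plan is to verify the two assertions of Lemma~\ref{lem:e2} directly. For the $\mathcal{C}^1$ regularity of $\Theta$, the key observation is that $\Theta$ is the composition of the maps $u\mapsto(u,\Psi(u))$ (which is $\mathcal{C}^1$ by \eqref{psipos} and Lemma~\ref{lem:e1}) with the smooth bilinear-type functional $(u,w)\mapsto\frac{1}{2}\int_M b(x)(1-qw)u^2\,d\mu_g$. Since $b\in\mathcal{C}^1(M)$ is bounded, this last functional is clearly of class $\mathcal{C}^1$ on $H^1_g(M)\times H^1_g(M)$ (using the Sobolev embedding $H^1_g(M)\hookrightarrow L^3(M)$ in dimension $n\le 3$ to control the cubic terms), so $\Theta\in\mathcal{C}^1(H^1_g(M),\mathbb{R})$ follows by the chain rule. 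Differentiating, for $h\in H^1_g(M)$ one gets
\[
\Theta'(u)[h]=\int_M b(x)(1-q\Psi(u))uh\,d\mu_g-\frac{q}{2}\int_M b(x)\Psi'(u)[h]\,u^2\,d\mu_g .
\]

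The main point is then to show that this reduces to the claimed formula, i.e. that
\[
-\frac{q}{2}\int_M b(x)\Psi'(u)[h]\,u^2\,d\mu_g=-q\int_M b(x)(1-q\Psi(u))\Psi(u)uh\,d\mu_g .
\]
To see this I would use the two defining equations \eqref{eq:e1} and \eqref{eq:e2} as a pairing trick. Test \eqref{eq:e2} (the equation for $V_u[h]=\Psi'(u)[h]$) against the function $\Psi(u)$ and integrate by parts, then test \eqref{eq:e1} (the equation for $\Psi(u)$) against $V_u[h]$ and integrate by parts; the left-hand sides of the two resulting identities coincide, namely both equal
\[
\int_M c(x)\nabla_g V_u[h]\cdot\nabla_g\Psi(u)\,d\mu_g+\int_M b(x)(1+q^2u^2)V_u[h]\,\Psi(u)\,d\mu_g .
\]
Equating the right-hand sides gives
\[
\int_M 2b(x)qu(1-q\Psi(u))h\,\Psi(u)\,d\mu_g=\int_M b(x)qu^2\,V_u[h]\,d\mu_g ,
\]
which is exactly the identity needed above (divide by $2$). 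Substituting back into the expression for $\Theta'(u)[h]$ collapses the two terms into $\int_M b(x)[(1-q\Psi(u))-q(1-q\Psi(u))\Psi(u)]uh\,d\mu_g=\int_M b(x)(1-q\Psi(u))^2uh\,d\mu_g$, as claimed.

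I do not expect any serious obstacle here: the regularity is a routine chain-rule argument given Lemma~\ref{lem:e1}, and the only mildly delicate step is the symmetric testing of \eqref{eq:e1} and \eqref{eq:e2}, which is justified because $\Psi(u),V_u[h]\in H^1_g(M)$ are legitimate test functions for each other's (weak) equations and $c,b,u^2\in L^\infty$ after invoking elliptic regularity for $\Psi(u)$ together with the Sobolev embedding; the integrations by parts produce no boundary terms since $M$ is closed. The one thing to be careful about is keeping track of the factor $2$ and the signs, but this is purely bookkeeping.
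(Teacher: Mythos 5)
Your argument is correct and is essentially the standard proof: the paper itself does not prove Lemma \ref{lem:e2} but refers to Druet--Hebey \cite{DH}, where the derivative formula is obtained exactly as you do, by the chain rule together with the symmetric testing of \eqref{eq:e1} against $V_u[h]$ and of \eqref{eq:e2} against $\Psi(u)$, which yields $\int_M b\,q u^2 V_u[h]\,d\mu_g=\int_M 2b\,qu(1-q\Psi(u))\Psi(u)h\,d\mu_g$ and collapses the two terms into $\int_M b(1-q\Psi(u))^2uh\,d\mu_g$; your bookkeeping of the factor $2$ and the signs is right. One small inaccuracy: elliptic regularity for $\Psi(u)$ does not give $u^2\in L^{\infty}$ for a general $u\in H^1_g(M)$, but this is not needed --- all the pairings converge because $0<\Psi(u)<1/q$ and $H^1_g(M)\hookrightarrow L^q(M)$ for the relevant exponents in dimension $2$ or $3$, so the weak formulations may be tested against each other directly.
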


Next, we introduce the functionals $I_{\varepsilon},J_{\varepsilon
},G_{\varepsilon}:H_{g}^{1}(M)\rightarrow\mathbb{R}$ given by%
\begin{equation}
I_{\varepsilon}(u):=J_{\varepsilon}(u)+\frac{\omega^{2}}{2}G_{\varepsilon
}(u),\label{ieps}%
\end{equation}
where
\[
J_{\varepsilon}(u):=\frac{1}{2\varepsilon^{2}}\int\limits_{M}\left[
\varepsilon^{2}c(x)|\nabla_{g}u|^{2}+d(x)u^{2}\right]  d\mu_{g}-\frac
{1}{p\varepsilon^{2}}\int\limits_{M}b(x)\left(  u^{+}\right)  ^{p}d\mu_{g}%
\]
with $d(x):=a(x)-\omega^{2}b(x)$, and
\[
G_{\varepsilon}(u):=\frac{q}{\varepsilon^{2}}\int_{M}b(x)\Psi(u)u^{2}d\mu_{g}.
\]
From Lemma \ref{lem:e2} we deduce that
\[
\frac{1}{2}G_{\varepsilon}^{\prime}(u)[\varphi]=\frac{1}{\varepsilon^{2}}%
\int_{M}b(x)[2q\Psi(u)-q^{2}\Psi^{2}(u)]u\varphi\,d\mu_{g}.
\]
Hence,
\[
I_{\varepsilon}^{\prime}(u)\varphi=\frac{1}{\varepsilon^{2}}\int
_{M}\varepsilon^{2}c(x)\nabla_{g}u\nabla_{g}\varphi+a(x)u\varphi
-b(x)(u^{+})^{p-1}\varphi-b(x)\omega^{2}(1-q\Psi(u))^{2}u\varphi\,d\mu_{g}.
\]

Therefore, if $u$ is a critical point of the functional $I_{\varepsilon},$
then $u$ solves the problem%
\begin{equation}
\left\{
\begin{array}
[c]{l}%
-\varepsilon^{2}\text{div}_{g}\left(  c(x)\nabla_{g}u\right)  +(a(x)-\omega
^{2}b(x))u+\omega^{2}qb(x)\Psi(u)(2-q\Psi(u))u=b(x)(u^{+})^{p-1},\\
u\in H_{g}^{1}(M).
\end{array}
\right.  \label{p}%
\end{equation}
If $u\neq0$ by the maximum principle and regularity theory we have that $u>0$.
Thus the pair $(u,\Psi(u))$ is a solution of the system (\ref{eq:P}). This
reduces the existence problem for the system (\ref{eq:P}) to showing that the
functional $I_{\varepsilon}$ has a nontrivial critical point.

\subsection{The limit problems}

Theorem \ref{thm:main}\ concerns manifolds of dimensions $2$ and $3.$ To
simplify the exposition we shall treat in full detail only the case $n=2$.
Everything can be extended in a straightforward way to the case $n=3,$ except
for the estimates in section \ref{sec:tech}. These estimates, however, were
computed in the appendix of \cite{GMP} for $n=3$.

Henceforth, we assume that $\dim M=2.$ We fix $r>0$ smaller than the
injectivity radius of $M.$ We identify the tangent space of $M$ at $\xi$ with
$\mathbb{R}^{2}$ and denote by $B(x,r)$ the ball in $\mathbb{R}^{2}$ centered
at $x$ of radius $r$ and by $B_{g}(\xi,r)$ the ball in $M$ centered at $\xi$
of radius $r$, with respect to the distance induced by the Riemannian metric
$g.$ The exponential map $\exp_{\xi}:B(0,r)\rightarrow B_{g}(\xi,r)$ provides
local coordenates on $M,$ which are called normal coordinates. We denote by
$g_{\xi}$ the Riemannian metric at $\xi$ given in normal coordinates by the
matrix $\left(  g_{ij}\right)  .$ We denote the inverse matrix by
$(g^{ij}(z)):=\left(  g_{ij}(z)\right)  ^{-1}$ and write $\left\vert g_{\xi
}(z)\right\vert :=\det\left(  g_{ij}(z)\right)  .$ Then, we have that
\begin{align}
g^{ij}(\varepsilon z)  &  =\delta_{ij}+\frac{\varepsilon^{2}}{2}\sum
_{r,k=1}^{n}\frac{\partial^{2}g^{ij}}{\partial z_{r}\partial z_{k}}%
(0)z_{r}z_{k}+O(\varepsilon^{3}|z|^{3})=\delta_{ij}+o(\varepsilon
),\label{eq:espg1}\\
\left\vert g(\varepsilon z)\right\vert ^{\frac{1}{2}}  &  =1-\frac
{\varepsilon^{2}}{4}\sum_{i,r,k=1}^{n}\frac{\partial^{2}g^{ii}}{\partial
z_{r}\partial z_{k}}(0)z_{r}z_{k}+O(\varepsilon^{3}|z|^{3})=1+o(\varepsilon).
\label{eq:espg2}%
\end{align}
Here $\delta_{ij}$ denotes the Kronecker symbol.

For $p\in(2,\infty)$ and $\xi\in M,$ set%
\[
{A(\xi):=\frac{a(\xi)}{c(\xi)},}\text{\qquad}{B(\xi):=\frac{b(\xi)}{c(\xi
)},\qquad,\gamma(\xi):=}\left(  {\frac{a(\xi)}{b(\xi)}}\right)  ^{\frac
{1}{p-2}}{.}%
\]
We consider the problem
\[
-c(\xi)\Delta V+a(\xi)V=b(\xi)V^{p-1},\text{\qquad}V\in H^{1}(\mathbb{R}^{2}),
\]
and denote by $V^{\xi}$ its unique positive spherically symmetric solution.
This problem is equivalent to
\[
-\Delta V+A(\xi)V=B(\xi)V^{p-1},\text{\qquad}V\in H^{1}(\mathbb{R}^{2}).
\]
The function $V^{\xi}$ and its derivatives decay exponentially at infinity.
$V^{\xi}$ can be written as
\[
V^{\xi}(z)=\gamma(\xi)U(\sqrt{A(\xi)}z),
\]
where $U$ is the unique positive spherically symmetric solution to
\[
-\Delta U+U=U^{p-1},\text{\qquad}U\in H^{1}(\mathbb{R}^{2}).
\]

For $\xi\in M$ and $\varepsilon>0$ we define $W_{\varepsilon,\xi}\in H_{g}%
^{1}(M)$ by
\[
W_{\varepsilon,\xi}(x):=\left\{
\begin{array}
[c]{ll}%
V^{\xi}\left(  \frac{1}{\varepsilon}\exp_{\xi}^{-1}(x)\right)  \chi\left(
\exp_{\xi}^{-1}(x)\right)  & \text{if }x\in B_{g}(\xi,r),\\
0 & \text{otherwise,}%
\end{array}
\right.
\]
where $\chi\in\mathcal{C}^{\infty}(\mathbb{R}^{n})$ is a radial cut-off
function such that $\chi(z)=1$ if $\left\vert z\right\vert \leq r/2$ and
$\chi(z)=0$ if $\left\vert z\right\vert \geq r.$ Setting $V_{\varepsilon
}(z):=V\left(  \frac{z}{\varepsilon}\right)  $ and $y:=\exp_{\xi}^{-1}x$ we
have that
\[
W_{\varepsilon,\xi}(\exp_{\xi}(y))=V^{\xi}\left(  \frac{y}{\varepsilon
}\right)  \chi(y)=V_{\varepsilon}^{\xi}(y)\chi(y),
\]
so the function $W_{\varepsilon,\xi}$ is simply the function $V^{\xi}$
rescaled, cut off and read in normal coordinates at $\xi$ in $M.$

Similarly, for $i=1,2$ we define
\[
Z_{\varepsilon,\xi}^{i}(x)=\left\{
\begin{array}
[c]{ll}%
\psi_{\xi}^{i}\left(  \frac{1}{\varepsilon}\exp_{\xi}^{-1}(x)\right)
\chi\left(  \exp_{\xi}^{-1}(x)\right)   & \text{if }x\in B_{g}(\xi,r),\\
0 & \text{otherwise,}%
\end{array}
\right.
\]
where
\[
\psi_{\xi}^{i}(\eta)=\frac{\partial}{\partial\eta_{i}}V^{\xi}(\eta)=\gamma
(\xi)\sqrt{A(\xi)}\frac{\partial U}{\partial\eta_{i}}(\sqrt{A(\xi)}\eta).
\]
The functions $\psi_{\xi}^{i}$ are solutions of the linearized equation
\[
-\Delta\psi+A(\xi)\psi=(p-1)B(\xi)\left(  V^{\xi}\right)  ^{p-2}\psi\text{
\ \ in }\mathbb{R}^{2}.
\]

\begin{proposition}
\label{prop:Zi}There is a positive constant $C$ such that
\[
\left\langle Z_{\varepsilon,\xi}^{h},Z_{\varepsilon,\xi}^{k}\right\rangle
_{\varepsilon}=C\delta_{hk}+o(1),
\]
as $\varepsilon\rightarrow0.$
\end{proposition}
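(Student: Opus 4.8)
The plan is to compute the inner product $\langle Z^h_{\varepsilon,\xi},Z^k_{\varepsilon,\xi}\rangle_\varepsilon$ directly by changing to normal coordinates at $\xi$ and rescaling by $\varepsilon$. Recall that the relevant inner product is
\[
\langle u,w\rangle_\varepsilon=\frac{1}{\varepsilon^2}\int_M\bigl[\varepsilon^2 c(x)\nabla_g u\cdot\nabla_g w+d(x)uw\bigr]\,d\mu_g,
\]
so after writing $Z^i_{\varepsilon,\xi}=\psi^i_\xi(\exp_\xi^{-1}(\cdot)/\varepsilon)\chi(\exp_\xi^{-1}(\cdot))$, passing to the variable $y=\exp_\xi^{-1}(x)$ and then setting $y=\varepsilon z$, the integral becomes (up to the cut-off) an integral over $\mathbb{R}^2$ of the form
\[
\int_{\mathbb{R}^2}\Bigl[c(\exp_\xi(\varepsilon z))\,g^{rs}(\varepsilon z)\partial_r\psi^h_\xi(z)\partial_s\psi^k_\xi(z)+d(\exp_\xi(\varepsilon z))\,\psi^h_\xi(z)\psi^k_\xi(z)\Bigr]|g(\varepsilon z)|^{1/2}\,dz+o(1),
\]
where the $o(1)$ absorbs the contribution of the region $|z|\ge r/(2\varepsilon)$ on which $\chi$ and its derivatives are nonconstant — this is controlled by the exponential decay of $V^\xi$ and its derivatives.

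Next I would pass to the limit $\varepsilon\to0$: by the expansions (\ref{eq:espg1})--(\ref{eq:espg2}), $g^{rs}(\varepsilon z)\to\delta_{rs}$ and $|g(\varepsilon z)|^{1/2}\to1$ uniformly on compacta, and $c(\exp_\xi(\varepsilon z))\to c(\xi)$, $d(\exp_\xi(\varepsilon z))\to d(\xi)$, so dominated convergence (justified again by exponential decay) gives
\[
\langle Z^h_{\varepsilon,\xi},Z^k_{\varepsilon,\xi}\rangle_\varepsilon=\int_{\mathbb{R}^2}\bigl[c(\xi)\nabla\psi^h_\xi\cdot\nabla\psi^k_\xi+d(\xi)\psi^h_\xi\psi^k_\xi\bigr]\,dz+o(1).
\]
Now I would use the linearized equation $-\Delta\psi^k_\xi+A(\xi)\psi^k_\xi=(p-1)B(\xi)(V^\xi)^{p-2}\psi^k_\xi$; dividing by $c(\xi)$ is unnecessary if instead I multiply the equation by $c(\xi)$, which yields $-c(\xi)\Delta\psi^k_\xi+a(\xi)\psi^k_\xi=(p-1)b(\xi)(V^\xi)^{p-2}\psi^k_\xi$. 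Integrating $c(\xi)\nabla\psi^h_\xi\cdot\nabla\psi^k_\xi$ by parts and noting $d(\xi)=a(\xi)-\omega^2 b(\xi)$, I get
\[
\langle Z^h_{\varepsilon,\xi},Z^k_{\varepsilon,\xi}\rangle_\varepsilon=\int_{\mathbb{R}^2}\bigl[(p-1)b(\xi)(V^\xi)^{p-2}-\omega^2 b(\xi)\bigr]\psi^h_\xi\psi^k_\xi\,dz+o(1).
\]

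It then remains to show this integral equals $C\delta_{hk}$ for some positive constant $C$ independent of $\xi$ and $h,k$. Writing $V^\xi(z)=\gamma(\xi)U(\sqrt{A(\xi)}z)$ and $\psi^i_\xi(z)=\gamma(\xi)\sqrt{A(\xi)}\,(\partial_i U)(\sqrt{A(\xi)}z)$, I would substitute $w=\sqrt{A(\xi)}z$; the radial symmetry of $U$ makes $\int_{\mathbb{R}^2}f(|w|)\partial_h U\,\partial_k U\,dw$ vanish for $h\ne k$ (the integrand is odd in one variable) and for $h=k$ the value is the same for $h=1$ and $h=2$, giving the Kronecker structure. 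Collecting the powers of $\gamma(\xi)$ and $A(\xi)$ and using the relation between $a,b,c,A,B,\gamma$, the $\xi$-dependent prefactors should cancel, leaving a universal positive constant $C=\frac12\int_{\mathbb{R}^2}\bigl[(p-1)U^{p-2}-\omega^2(\text{appropriate constant})\bigr]|\nabla U|^2\,dw$; positivity follows because $\psi^i_\xi$ is, up to scaling, the Morse-index-zero part of the kernel of the linearized operator, so the quadratic form is positive there — concretely, one checks $\int[(p-1)b(\xi)(V^\xi)^{p-2}-\omega^2 b(\xi)](\psi^i_\xi)^2>0$ either by a direct computation using $-c(\xi)\Delta\psi^i_\xi+d(\xi)\psi^i_\xi=[(p-1)b(\xi)(V^\xi)^{p-2}-\omega^2 b(\xi)]\psi^i_\xi$ (so the integral equals $\|\psi^i_\xi\|^2_\varepsilon$-type limit, manifestly positive) or by the standard nondegeneracy facts for $U$. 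The main obstacle is the bookkeeping: carefully justifying the cut-off error is $o(1)$ and tracking the $\gamma(\xi)$, $A(\xi)$ powers so that the prefactor is genuinely $\xi$-independent; neither step is deep, but both must be done cleanly.
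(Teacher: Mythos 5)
Your core computation coincides with the paper's proof: write $Z^i_{\varepsilon,\xi}$ in rescaled normal coordinates, use (\ref{eq:espg1})--(\ref{eq:espg2}), the continuity of $c$ and $d$, and the exponential decay of $V^\xi$ and its derivatives (which controls both the cut-off region and the passage to the limit) to obtain
\[
\left\langle Z_{\varepsilon,\xi}^{h},Z_{\varepsilon,\xi}^{k}\right\rangle_{\varepsilon}=c(\xi)\int_{\mathbb{R}^{2}}\nabla\psi_{\xi}^{h}\cdot\nabla\psi_{\xi}^{k}\,dz+d(\xi)\int_{\mathbb{R}^{2}}\psi_{\xi}^{h}\psi_{\xi}^{k}\,dz+o(1).
\]
The paper stops here, since the Kronecker structure follows immediately from the radial symmetry of $U$ (for $h\neq k$ both integrands are odd in one variable, and the two diagonal entries agree by the symmetry exchanging $z_{1}$ and $z_{2}$), and positivity is obvious because $c,d>0$. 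Your additional integration by parts against the linearized equation is correct but buys nothing, and it leads you to one false assertion: the $\xi$-dependent prefactors do \emph{not} cancel. Substituting $\psi_{\xi}^{i}(z)=\gamma(\xi)\sqrt{A(\xi)}\,\partial_{i}U(\sqrt{A(\xi)}z)$ and $w=\sqrt{A(\xi)}z$ gives
\[
\left\langle Z_{\varepsilon,\xi}^{h},Z_{\varepsilon,\xi}^{k}\right\rangle_{\varepsilon}=\gamma(\xi)^{2}\left[a(\xi)\int_{\mathbb{R}^{2}}\nabla\partial_{h}U\cdot\nabla\partial_{k}U\,dw+d(\xi)\int_{\mathbb{R}^{2}}\partial_{h}U\,\partial_{k}U\,dw\right]+o(1),
\]
which manifestly depends on $\xi$ for general $a,b,c$ (and the same $\xi$-dependence appears in your post-integration-by-parts form, since $b(\xi)\gamma(\xi)^{p-2}=a(\xi)$). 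The proposition --- and indeed the last line of the paper's own proof --- should be read as $C(\xi)\delta_{hk}+o(1)$ with $C$ continuous and positive; by compactness of $M$ this is bounded above and below by positive constants, which is all that is used later (namely $\Vert Z^{l}_{\varepsilon,\xi}\Vert_{\varepsilon}=O(1)$ and nondegeneracy of the Gram matrix). Finally, your appeal to a Morse-index argument for positivity is a red herring: the clean justification is the one you give parenthetically, which simply undoes the integration by parts and returns to $\int_{\mathbb{R}^2} c(\xi)|\nabla\psi_{\xi}^{i}|^{2}+d(\xi)(\psi_{\xi}^{i})^{2}\,dz>0$, i.e.\ to the form the paper never left.
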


\begin{proof}
From the Taylor expansions of $g^{ij}(\varepsilon z)$, $|g(\varepsilon
z)|^{\frac{1}{2}}$, $a(\exp_{\xi}(\varepsilon z))$ and $c(\exp_{\xi
}(\varepsilon z))$ we obtain
\begin{align*}
&  \left\langle Z_{\varepsilon,\xi}^{h},Z_{\varepsilon,\xi}^{k}\right\rangle
_{\varepsilon}=\frac{1}{\varepsilon^{2}}\int_{M}\varepsilon^{2}c(x)\nabla
_{g}Z_{\varepsilon,\xi}^{h}(x)\nabla_{g}Z_{\varepsilon,\xi}^{k}%
(x)+d(x)Z_{\varepsilon,\xi}^{h}(x)Z_{\varepsilon,\xi}^{k}(x)d\mu_{g}\\
&  =\int_{B(0,r/\varepsilon)}\sum_{ij}c(\exp_{\xi}(\varepsilon z))g_{\xi}%
^{ij}(\varepsilon z)\frac{\partial}{\partial z_{i}}(\psi_{\xi}^{h}%
(z)\chi(\varepsilon z))\frac{\partial}{\partial z_{j}}(\psi_{\xi}^{h}%
(z)\chi(\varepsilon z))|g_{\xi}(\varepsilon z)|^{\frac{1}{2}}dz\\
&  \qquad+\int_{B(0,r/\varepsilon)}d(\exp_{\xi}(\varepsilon z))\psi_{\xi}%
^{h}(z)\psi_{\xi}^{h}(z)\chi^{2}(\varepsilon z)|g_{\xi}(\varepsilon
z)|^{\frac{1}{2}}dz\\
&  =c(\xi)\int_{\mathbb{R}^{2}}\nabla\psi_{\xi}^{h}\nabla\psi_{\xi}%
^{h}dz+d(\xi)\int_{\mathbb{R}^{2}}\psi_{\xi}^{h}\psi_{\xi}^{k}dz+o(1)=C\delta
_{hk}+o(1),
\end{align*}
as claimed.
\end{proof}

Next, we compute the derivatives of $W_{\varepsilon,\xi}$ with respect to
$\xi$ in normal coordinates. Fix $\xi_{0}\in M$. We write the points $\xi\in
B_{g}(\xi_{0},r)$ as
\[
\xi=\xi(y)=\exp_{\xi_{0}}(y)\text{\qquad with }y\in B(0,r).
\]
We define
\[
\mathcal{E}(y,x)=\exp_{\xi(y)}^{-1}(x)=\exp_{\exp_{\xi_{0}}(y)}^{-1}(x),
\]
where $x\in B_{g}(\xi(y),r)$ and $y\in B(0,r)$. Then we can write
\begin{align*}
W_{\varepsilon,\xi(y)}(x) &  =\gamma(\xi(y))U_{\varepsilon}(\sqrt{A(\xi
(y))}\exp_{\xi(y)}^{-1}(x))\chi(\exp_{\xi(y)}^{-1}(x))\\
&  =\tilde{\gamma}(y)U_{\varepsilon}(\sqrt{\tilde{A}(y)}\mathcal{E}%
(y,x))\chi(\mathcal{E}(y,x))
\end{align*}
where $\tilde{A}(y)=A(\exp_{\xi_{0}}(y))$ and $\tilde{\gamma}(y)=\gamma
(\exp_{\xi_{0}}(y))$. Thus we have
\begin{align*}
\left.  \frac{\partial}{\partial y_{s}}W_{\varepsilon,\xi(y)}\right\vert
_{y=0} &  =\left(  \left.  \frac{\partial}{\partial y_{s}}\tilde{\gamma
}(y)\right\vert _{y=0}\right)  U\left(  \frac{1}{\varepsilon}\sqrt{\tilde
{A}(0)}\mathcal{E}(0,x)\right)  \chi(\mathcal{E}(0,x))\\
&  +\tilde{\gamma}(0)U\left(  \frac{1}{\varepsilon}\sqrt{\tilde{A}%
(0)}\mathcal{E}(0,x)\right)  \left.  \frac{\partial}{\partial y_{s}}%
\chi\left(  \mathcal{E}_{k}(y,x)\right)  \right\vert _{y=0}\\
&  +\tilde{\gamma}(0)\chi(\mathcal{E}(0,x))\left.  \frac{\partial}{\partial
y_{s}}U\left(  \frac{1}{\varepsilon}\sqrt{\tilde{A}(y)}\mathcal{E}%
(y,x)\right)  \right\vert _{y=0}.
\end{align*}
If $x=\exp_{\xi_{0}}\varepsilon z$, $\xi_{0}=\xi(0)$, then $\mathcal{E}%
(0,x)=\varepsilon z$ and we have
\begin{align}
\left.  \frac{\partial}{\partial y_{s}}W_{\varepsilon,\xi(y)}\right\vert
_{y=0} &  =\left(  \left.  \frac{\partial}{\partial y_{s}}\tilde{\gamma
}(y)\right\vert _{y=0}\right)  U(\sqrt{\tilde{A}(0)}z)\chi(\varepsilon
z)\nonumber\\
&  +\tilde{\gamma}(0)U\left(  \sqrt{\tilde{A}(0)}z\right)  \frac{\partial\chi
}{\partial\eta_{k}}(\varepsilon z)\left.  \frac{\partial}{\partial y_{s}%
}\mathcal{E}_{k}(y,\exp_{\xi_{0}}\varepsilon z)\right\vert _{y=0}%
\label{eq:derWeps-1}\\
&  +\tilde{\gamma}(0)\chi(\varepsilon z)\frac{\sqrt{\tilde{A}(0)}}%
{\varepsilon}\frac{\partial U}{\partial\eta_{k}}\left(  \sqrt{\tilde{A}%
(0)}z\right)  \left.  \frac{\partial}{\partial y_{s}}\mathcal{E}_{k}%
(y,\exp_{\xi_{0}}\varepsilon z)\right\vert _{y=0}.\nonumber
\end{align}
We also recall the following Taylor expansions:
\begin{equation}
\frac{\partial}{\partial y_{h}}\mathcal{E}_{k}(0,\exp_{\xi_{0}}\varepsilon
z)=-\delta_{hk}+O(\varepsilon^{2}|z|^{2}).\label{eq:espE}%
\end{equation}

\subsection{Outline of the proof of Theorem \ref{thm:main}}

Let $H_{\varepsilon}$ denote the Hilbert space $H_{g}^{1}(M)$ equipped with
the inner product
\[
\left\langle u,v\right\rangle _{\varepsilon}:=\frac{1}{\varepsilon^{2}}\left(
\varepsilon^{2}\int_{M}c(x)\nabla_{g}u\nabla_{g}v\,d\mu_{g}+\int
_{M}d(x)uv\,d\mu_{g}\right)  ,
\]
which induces the norm
\[
\Vert u\Vert_{\varepsilon}^{2}:=\frac{1}{\varepsilon^{2}}\left(
\varepsilon^{2}\int_{M}c(x)|\nabla_{g}u|^{2}d\mu_{g}+\int_{M}d(x)u^{2}d\mu
_{g}\right)  ,
\]
with $d(x):=a(x)-\omega^{2}b(x)>0$. Similarly, let $L_{\varepsilon}^{q}$ be
the Banach space $L_{g}^{q}(M)$ with the norm
\[
|u|_{q,\varepsilon}:=\left(  \frac{1}{\varepsilon^{2}}\int_{M}|u|^{q}d\mu
_{g}\right)  ^{1/q}.
\]

Since we are assuming that $\dim M=2,$ for each $q\geq2$ the embedding
$H_{\varepsilon}\hookrightarrow L_{\varepsilon}^{q}$ is continuous. In fact,
there is a positive constant $C$, independent of $\varepsilon,$ such that%
\begin{equation}
|u|_{q,\varepsilon}\leq C\left\Vert u\right\Vert _{\varepsilon}\qquad\forall
u\in H_{\varepsilon}, \label{r2}%
\end{equation}
Moreover, this embedding is compact.

Fix $p\in(2,\infty).$ The adjoint operator $i_{\varepsilon}^{\ast
}:L_{\varepsilon}^{p^{\prime}}\rightarrow H_{\varepsilon},$ $p^{\prime}%
:=\frac{p}{p-1},$ to the embedding $i_{\varepsilon}:H_{\varepsilon
}\hookrightarrow L_{\varepsilon}^{p}$ is defined by
\begin{align*}
u=i_{\varepsilon}^{\ast}(v)\Leftrightarrow &  \left\langle u,\varphi
\right\rangle _{\varepsilon}=\frac{1}{\varepsilon^{2}}\int_{M}v\varphi
\quad\forall\varphi\in H_{\varepsilon}\\
\Leftrightarrow &  -\varepsilon^{2}\text{div}_{g}\left(  c(x)\nabla
_{g}u\right)  +d(x)u=v,\quad u\in H_{g}^{1}(M).
\end{align*}
One has that
\begin{equation}
\left\Vert i_{\varepsilon}^{\ast}(v)\right\Vert _{\varepsilon}\leq
C|v|_{p^{\prime},\varepsilon}\qquad\forall v\in L_{\varepsilon}^{p^{\prime}},
\label{eq:istar}%
\end{equation}
where the constant $C$ does not depend on $\varepsilon.$

Using the adjoint operator we can rewrite problem (\ref{p}) as
\begin{equation}
u=i_{\varepsilon}^{\ast}\left[  b(x)f(u)+\omega^{2}b(x)g(u)\right]  ,\qquad
u\in H_{\varepsilon}, \label{p1}%
\end{equation}
where
\[
f(u):=\left(  u^{+}\right)  ^{p-1}\text{\qquad and\qquad}g(u):=\left(
q^{2}\Psi^{2}(u)-2q\Psi(u)\right)  u.
\]

Let%
\[
K_{\varepsilon,\xi}:=\text{Span }\left\{  Z_{\varepsilon,\xi}^{1}%
,Z_{\varepsilon,\xi}^{2}\right\}
\]
and%
\[
K_{\varepsilon,\xi}^{\bot}:=\left\{  \phi\in H_{\varepsilon}:\left\langle
\phi,Z_{\varepsilon,\xi}^{i}\right\rangle _{\varepsilon}=0,\ i=1,2\right\}  .
\]
We denote the projections onto these subspaces by
\[
\Pi_{\varepsilon,\xi}:H_{\varepsilon}\rightarrow K_{\varepsilon,\xi
}\text{\qquad and\qquad}\Pi_{\varepsilon,\xi}^{\bot}:H_{\varepsilon
}\rightarrow K_{\varepsilon,\xi}^{\bot}.
\]

We look for a solution of (\ref{p}) of the form%
\[
u_{\varepsilon}:=W_{\varepsilon,\xi}+\phi\text{\qquad with \ }\phi\in
K_{\varepsilon,\xi}^{\bot}.
\]
This is equivalent to solving the pair of equations
\begin{align}
\Pi_{\varepsilon,\xi}^{\perp}\left\{  W_{\varepsilon,\xi}+\phi-i_{\varepsilon
}^{\ast}\left[  b(x)f\left(  W_{\varepsilon,\xi}+\phi\right)  +\omega
^{2}b(x)g\left(  W_{\varepsilon,\xi}+\phi\right)  \right]  \right\}   &
=0,\label{red1}\\
\Pi_{\varepsilon,\xi}\left\{  W_{\varepsilon,\xi}+\phi-i_{\varepsilon}^{\ast
}\left[  b(x)f\left(  W_{\varepsilon,\xi}+\phi\right)  +\omega^{2}b(x)g\left(
W_{\varepsilon,\xi}+\phi\right)  \right]  \right\}   &  =0. \label{red2}%
\end{align}

The first step of the proof of Theorem \ref{thm:main} is to solve equation
(\ref{red1}). More precisely, for any fixed $\xi\in M$ and $\varepsilon$ small
enough, we will show that there is a function $\phi\in K_{\varepsilon,\xi
}^{\perp}$ such that (\ref{red1}) holds. To do this we consider the linear
operator $L_{\varepsilon,\xi}:K_{\varepsilon,\xi}^{\perp}\rightarrow
K_{\varepsilon,\xi}^{\perp}$ given by
\[
L_{\varepsilon,\xi}(\phi):=\Pi_{\varepsilon,\xi}^{\perp}\left\{
\phi-i_{\varepsilon}^{\ast}\left[  b(x)f^{\prime}\left(  W_{\varepsilon,\xi
}\right)  \phi\right]  \right\}  .
\]
For the proof of the following statement we refer to Lemma 4.1 of \cite{CGM}
(see also Proposition 3.1 of \cite{MP}).

\begin{proposition}
\label{inv} There exist $\varepsilon_{0}>0$ and $C>0$ such that, for every
$\varepsilon\in(0,\varepsilon_{0}),$ $\xi\in M$ and $\phi\in K_{\varepsilon
,\xi}^{\perp},$
\[
\left\Vert L_{\varepsilon,\xi}(\phi)\right\Vert _{\varepsilon}\geq C\Vert
\phi\Vert_{\varepsilon}.
\]

\end{proposition}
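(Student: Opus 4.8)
The plan is to establish the invertibility of $L_{\varepsilon,\xi}$ on $K_{\varepsilon,\xi}^{\perp}$ by a contradiction argument, passing to a rescaled limit problem. Suppose the conclusion fails; then there are sequences $\varepsilon_{n}\to0$, $\xi_{n}\in M$ and $\phi_{n}\in K_{\varepsilon_{n},\xi_{n}}^{\perp}$ with $\Vert\phi_{n}\Vert_{\varepsilon_{n}}=1$ and $\Vert L_{\varepsilon_{n},\xi_{n}}(\phi_{n})\Vert_{\varepsilon_{n}}\to0$. Since $M$ is compact we may assume $\xi_{n}\to\bar{\xi}$. Writing $w_{n}:=L_{\varepsilon_{n},\xi_{n}}(\phi_{n})$, the definition of $L_{\varepsilon,\xi}$ and the fact that $\phi_{n}\perp K_{\varepsilon_{n},\xi_{n}}$ give
\[
\phi_{n}=w_{n}+\Pi_{\varepsilon_{n},\xi_{n}}^{\perp}\,i_{\varepsilon_{n}}^{\ast}\left[b(x)f^{\prime}(W_{\varepsilon_{n},\xi_{n}})\phi_{n}\right]+\text{(a term in }K_{\varepsilon_{n},\xi_{n}}\text{)},
\]
and testing against $\phi_{n}$ itself, using $\langle w_{n},\phi_{n}\rangle_{\varepsilon_{n}}=o(1)$ and the definition of $i_{\varepsilon}^{\ast}$, reduces everything to understanding the bilinear form $\frac{1}{\varepsilon_{n}^{2}}\int_{M}b(x)f^{\prime}(W_{\varepsilon_{n},\xi_{n}})\phi_{n}^{2}\,d\mu_{g}$ relative to $\Vert\phi_{n}\Vert_{\varepsilon_{n}}^{2}=1$.

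The core of the argument is the change of variables $x=\exp_{\xi_{n}}(\varepsilon_{n}z)$ together with the rescaling $\tilde{\phi}_{n}(z):=\phi_{n}(\exp_{\xi_{n}}(\varepsilon_{n}z))$. Using the Taylor expansions (\ref{eq:espg1})--(\ref{eq:espg2}) for the metric and the corresponding expansions of $a,b,c$ at $\xi_{n}$, the rescaled norm $\Vert\phi_{n}\Vert_{\varepsilon_{n}}^{2}$ converges to the $H^{1}(\mathbb{R}^{2})$-type norm $\int_{\mathbb{R}^{2}}(c(\bar\xi)|\nabla\tilde{\phi}|^{2}+d(\bar\xi)\tilde{\phi}^{2})\,dz$ of the weak limit $\tilde{\phi}$ of $\tilde{\phi}_{n}$ (up to a subsequence, with weak $H^{1}_{\mathrm{loc}}$ convergence and, by the exponential decay of $W_{\varepsilon,\xi}$, no loss of mass in the relevant nonlinear terms). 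Since $f^{\prime}(W_{\varepsilon_{n},\xi_{n}})(\exp_{\xi_{n}}(\varepsilon_{n}z))\to(p-1)(V^{\bar\xi}(z))^{p-2}$ uniformly on compact sets and decays exponentially, one passes to the limit in the identity above and concludes that $\tilde{\phi}$ is a weak solution of the linearized limit equation
\[
-c(\bar\xi)\Delta\tilde{\phi}+d(\bar\xi)\tilde{\phi}=\omega^{2}\,(\text{lower order from }g)+(p-1)b(\bar\xi)(V^{\bar\xi})^{p-2}\tilde{\phi}.
\]
A key point is that the $\omega^{2}g$-contribution is of lower order: because $\Psi(u)$ and its derivative are bounded (Lemmas \ref{lem:e1}, \ref{lem:e2}) and $\Psi(W_{\varepsilon,\xi})\to0$ in the relevant sense as $\varepsilon\to0$ (the mass of $W_{\varepsilon,\xi}$ concentrates), the term $g^{\prime}(W_{\varepsilon,\xi})\phi$ contributes nothing to the limit equation, so $\tilde{\phi}$ solves exactly $-\Delta\tilde{\phi}+A(\bar\xi)\tilde{\phi}=(p-1)B(\bar\xi)(V^{\bar\xi})^{p-2}\tilde{\phi}$. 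The nondegeneracy of the ground state $U$ then forces $\tilde{\phi}\in\mathrm{Span}\{\partial_{1}V^{\bar\xi},\partial_{2}V^{\bar\xi}\}$; but the orthogonality $\phi_{n}\perp Z_{\varepsilon_{n},\xi_{n}}^{i}$, rescaled and passed to the limit using Proposition \ref{prop:Zi}, gives $\int_{\mathbb{R}^{2}}(c(\bar\xi)\nabla\tilde{\phi}\nabla\psi_{\bar\xi}^{i}+d(\bar\xi)\tilde{\phi}\psi_{\bar\xi}^{i})\,dz=0$ for $i=1,2$, hence $\tilde{\phi}=0$.

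Finally, $\tilde{\phi}=0$ must be upgraded to a contradiction with $\Vert\phi_{n}\Vert_{\varepsilon_{n}}=1$. Here one splits $\Vert\phi_{n}\Vert_{\varepsilon_{n}}^{2}$ into the integral over $B_{g}(\xi_{n},r)$ and its complement; outside the ball, elliptic estimates and the structure of $i_{\varepsilon}^{\ast}$ show the contribution is $o(1)$, while inside, the rescaled sequence $\tilde{\phi}_{n}$ converges strongly in $L^{2}_{\mathrm{loc}}$ against the exponentially-decaying weight $b(x)f^{\prime}(W_{\varepsilon_{n},\xi_{n}})$, so the quadratic identity $1=o(1)+\frac{1}{\varepsilon_{n}^{2}}\int_{M}b(x)f^{\prime}(W_{\varepsilon_{n},\xi_{n}})\phi_{n}^{2}\to\int_{\mathbb{R}^{2}}(p-1)B(\bar\xi)(V^{\bar\xi})^{p-2}\tilde{\phi}^{2}=0$ yields the contradiction. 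The main obstacle, and where care is genuinely needed, is the uniformity in $\xi$ (the constant $C$ and the threshold $\varepsilon_{0}$ must be independent of $\xi\in M$) and the rigorous control of the tail terms — both the region outside $B_{g}(\xi,r)$ and the Proca-coupling term $g$ — which is exactly what makes this a nonstandard linear estimate rather than the textbook scalar one; for this reason the paper defers the detailed argument to Lemma 4.1 of \cite{CGM}.
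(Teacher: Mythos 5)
Your argument is the standard blow-up/contradiction proof of the coercivity of $L_{\varepsilon,\xi}$ (rescale, pass to the linearized limit problem, invoke nondegeneracy of $U$ and the orthogonality to the $Z_{\varepsilon,\xi}^{i}$, then upgrade $\tilde\phi=0$ to a contradiction with $\Vert\phi_{n}\Vert_{\varepsilon_{n}}=1$ via the identity $1=o(1)+\frac{1}{\varepsilon_{n}^{2}}\int_{M}b\,f^{\prime}(W_{\varepsilon_{n},\xi_{n}})\phi_{n}^{2}$), and this is essentially the approach of the sources the paper itself defers to for this statement (Lemma 4.1 of \cite{CGM} and Proposition 3.1 of \cite{MP}); the paper contains no proof of its own. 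One small correction: the operator $L_{\varepsilon,\xi}$ is defined using only $f^{\prime}(W_{\varepsilon,\xi})$, so your concern about the Proca coupling $g^{\prime}$ entering the limit equation is moot here --- that term appears only in $S_{\varepsilon,\xi}$ and is handled separately in the contraction step --- and likewise the quadratic form $\frac{1}{\varepsilon_{n}^{2}}\int_{M}b\,f^{\prime}(W_{\varepsilon_{n},\xi_{n}})\phi_{n}^{2}$ is supported where $W_{\varepsilon_{n},\xi_{n}}$ lives, so no separate control of $\phi_{n}$ outside $B_{g}(\xi_{n},r)$ is needed.
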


This result allows to use a contraction mapping argument to solve equation
(\ref{red1}). The following statement is proved in section \ref{sec:redux}.

\begin{proposition}
\label{resto} There exist $\varepsilon_{0}>0$ and $C>0$ such that, for each
$\xi\in M$ and each $\varepsilon\in(0,\varepsilon_{0}),$ there exists a unique
$\phi_{\varepsilon,\xi}\in K_{\varepsilon,\xi}^{\perp}$ which solves equation
\emph{(\ref{red1}).} Moreover,
\[
\left\Vert \phi_{\varepsilon,\xi}\right\Vert _{\varepsilon}\leq C\varepsilon.
\]
The map $\xi\mapsto\phi_{\varepsilon,\xi}$ is a $\mathcal{C}^{1}$-map.
\end{proposition}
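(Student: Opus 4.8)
The plan is to solve equation (\ref{red1}) by a standard contraction/fixed-point argument, exploiting the uniform invertibility of $L_{\varepsilon,\xi}$ from Proposition \ref{inv}. Write $N_{\varepsilon,\xi}(\phi):=\Pi_{\varepsilon,\xi}^{\perp}\big\{W_{\varepsilon,\xi}+\phi-i_{\varepsilon}^{\ast}[b(x)f(W_{\varepsilon,\xi}+\phi)+\omega^{2}b(x)g(W_{\varepsilon,\xi}+\phi)]\big\}$, and decompose the bracket by isolating the linear-in-$\phi$ part: set
\[
R_{\varepsilon,\xi}:=\Pi_{\varepsilon,\xi}^{\perp}\big\{W_{\varepsilon,\xi}-i_{\varepsilon}^{\ast}[b(x)f(W_{\varepsilon,\xi})]\big\},
\]
\[
S_{\varepsilon,\xi}(\phi):=\Pi_{\varepsilon,\xi}^{\perp}i_{\varepsilon}^{\ast}\big\{b(x)[f(W_{\varepsilon,\xi}+\phi)-f(W_{\varepsilon,\xi})-f'(W_{\varepsilon,\xi})\phi]+\omega^{2}b(x)g(W_{\varepsilon,\xi}+\phi)\big\}.
\]
Then $N_{\varepsilon,\xi}(\phi)=0$ is equivalent to $L_{\varepsilon,\xi}(\phi)=-R_{\varepsilon,\xi}+S_{\varepsilon,\xi}(\phi)$, i.e. to the fixed-point equation $\phi=T_{\varepsilon,\xi}(\phi):=L_{\varepsilon,\xi}^{-1}(-R_{\varepsilon,\xi}+S_{\varepsilon,\xi}(\phi))$ on $K_{\varepsilon,\xi}^{\perp}$, where $L_{\varepsilon,\xi}^{-1}$ is bounded by $1/C$ uniformly in $\varepsilon,\xi$ thanks to Proposition \ref{inv}.

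The two estimates that make the argument run are: (i) $\|R_{\varepsilon,\xi}\|_{\varepsilon}\leq C\varepsilon$, and (ii) $S_{\varepsilon,\xi}$ is a contraction of small Lipschitz constant on a ball of radius $O(\varepsilon)$. For (i), using (\ref{eq:istar}) it suffices to bound $|W_{\varepsilon,\xi}^{p-1}-(\text{the }L^{p'}\text{-term coming from }-\varepsilon^{2}\mathrm{div}_g(c\nabla W_{\varepsilon,\xi})+d\,W_{\varepsilon,\xi})|_{p',\varepsilon}$; by construction $V^{\xi}$ solves the limit equation on $\mathbb{R}^{2}$ exactly, so the discrepancy comes only from the metric coefficients differing from the Euclidean ones, from the potential $d(x)$ differing from its value $a(\xi)-\omega^2 b(\xi)$ at $\xi$, and from the cut-off $\chi$; after rescaling $x=\exp_\xi(\varepsilon z)$ the Taylor expansions (\ref{eq:espg1}), (\ref{eq:espg2}) and the exponential decay of $V^\xi$ give the $O(\varepsilon)$ bound. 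The projection $\Pi_{\varepsilon,\xi}^{\perp}$ only helps (it is a contraction). For (ii), the quadratic remainder of $f$ satisfies $|f(W+\phi)-f(W)-f'(W)\phi|\lesssim |\phi|^{\min(p-1,2)}(|W|^{p-1-\min(p-1,2)}+|\phi|^{p-1-\min(p-1,2)})$ in the relevant range, which via Hölder and (\ref{r2}), (\ref{eq:istar}) yields $\|\Pi^{\perp}i^*_\varepsilon[b\,(\cdot)]\|_\varepsilon = o(\|\phi\|_\varepsilon)$ on small balls; and the Proca term $g(u)=(q^2\Psi^2(u)-2q\Psi(u))u$ is controlled by Lemma \ref{lem:e1} (so $\Psi$ and $\Psi'$ are bounded $\mathcal{C}^1$ maps with values giving $0\le\Psi<1/q$, $0\le\Psi'(u)[u]\le 2/q$), whence $g$ is Lipschitz with a constant times a power of $\varepsilon$ after rescaling because $W_{\varepsilon,\xi}$ is concentrated — $|\Psi(W_{\varepsilon,\xi})|$ is small in the relevant norms as $\varepsilon\to0$. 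Combining, $\|T_{\varepsilon,\xi}(\phi)\|_\varepsilon\le \frac1C(C'\varepsilon + o(1)\|\phi\|_\varepsilon)$ and $\|T_{\varepsilon,\xi}(\phi_1)-T_{\varepsilon,\xi}(\phi_2)\|_\varepsilon\le \frac12\|\phi_1-\phi_2\|_\varepsilon$ on the ball $\{\|\phi\|_\varepsilon\le \hat C\varepsilon\}$ for $\varepsilon$ small, so the Banach fixed-point theorem gives a unique $\phi_{\varepsilon,\xi}$ in that ball, with $\|\phi_{\varepsilon,\xi}\|_\varepsilon\le C\varepsilon$.

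For the $\mathcal{C}^1$-dependence on $\xi$, the cleanest route is the implicit function theorem applied to the $\mathcal{C}^1$ map $(\varepsilon,\xi,\phi)\mapsto N_{\varepsilon,\xi}(\phi)$ (jointly $\mathcal{C}^1$ in $\xi$ and $\phi$: $\xi\mapsto W_{\varepsilon,\xi}$ and $\xi\mapsto Z_{\varepsilon,\xi}^i$ are $\mathcal{C}^1$ by the computation of $\partial_{y_s}W_{\varepsilon,\xi(y)}$ above and the corresponding expansions, $i_\varepsilon^*$ is linear and bounded, $f$ is $\mathcal{C}^1$, and $\Psi$ is $\mathcal{C}^1$ by Lemma \ref{lem:e1}); here one must be slightly careful because $\Pi_{\varepsilon,\xi}^\perp$ and the target space $K_{\varepsilon,\xi}^\perp$ vary with $\xi$ — the standard fix is to write $N_{\varepsilon,\xi}(\phi)=0$ as an equation in the fixed space $H_\varepsilon$ together with the $2$ scalar constraints $\langle\phi,Z_{\varepsilon,\xi}^i\rangle_\varepsilon=0$, and check that the partial differential in $\phi$ at the solution is an isomorphism (this is exactly Proposition \ref{inv} plus the fact that $S'_{\varepsilon,\xi}(\phi_{\varepsilon,\xi})$ is small). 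The main obstacle is the bookkeeping in estimate (i) — getting the precise $O(\varepsilon)$ rate (rather than merely $o(1)$) requires carefully matching the error terms from the metric, the potential, and the cut-off against the exponential decay of $V^\xi$ — together with verifying that the Proca nonlinearity $g$ genuinely contributes only a higher-order perturbation, which relies on the a priori bounds $0\le\Psi<1/q$ and the smallness of $\Psi(W_{\varepsilon,\xi})$ in the rescaled norms; everything else is routine Lyapunov–Schmidt machinery.
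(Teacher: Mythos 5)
Your proposal is correct and follows essentially the same route as the paper: the same decomposition of (\ref{red1}) into a residual term $R_{\varepsilon,\xi}$ of size $O(\varepsilon)$, a superlinear Taylor remainder of $f$, and the Proca term (which the paper keeps as two separate pieces $N_{\varepsilon,\xi}$ and $S_{\varepsilon,\xi}$ and controls via the quantitative bounds on $\Psi$ and $\Psi'$ in Lemmas \ref{lem:e3} and \ref{lem:e7}), followed by the Banach fixed point theorem on a ball of radius $O(\varepsilon)$ and the implicit function theorem for the $\mathcal{C}^{1}$-dependence on $\xi$. Your fix for the $\xi$-dependent space $K_{\varepsilon,\xi}^{\perp}$ is the same device the paper uses when it defines $G(\xi,u)$ on all of $M\times H_{\varepsilon}$ by inserting the projections $\Pi_{\varepsilon,\xi}^{\perp}$ and $\Pi_{\varepsilon,\xi}$.
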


The second step is to solve equation (\ref{red2}). More precisely, for
$\varepsilon$ small enough we will find a point $\xi$ in $M$ such that
equation (\ref{red2}) is satisfied. To this end we introduce the reduced
energy function $\widetilde{I}_{\varepsilon}:M\rightarrow\mathbb{R}$ defined
by
\[
\widetilde{I}_{\varepsilon}(\xi):=I_{\varepsilon}\left(  W_{\varepsilon,\xi
}+\phi_{\varepsilon,\xi}\right)  ,
\]
where $I_{\varepsilon}$ is the variational functional defined in (\ref{ieps})
whose critical points are the solutions to problem (\ref{p}). It is easy to
verify that $\xi_{\varepsilon}$ is a critical point of $\widetilde
{I}_{\varepsilon}$ if and only if the function $u_{\varepsilon}=W_{\varepsilon
,\xi_{\varepsilon}}+\phi_{\varepsilon,\xi_{\varepsilon}}$ is a critical point
of $I_{\varepsilon}$.

In Lemmas \ref{fine4} and \ref{lem:expJeps} we compute the asymptotic
expansion of the reduced functional $\tilde{I}_{\varepsilon}$ with respect to
the parameter $\varepsilon$. We prove the following result.

\begin{proposition}
\label{lem:tool2}The expansion
\[
\tilde{I}_{\varepsilon}(\xi)=C\frac{c(\xi)^{\frac{n}{2}}a(\xi)^{\frac{p}%
{p-2}-\frac{n}{2}}}{b(\xi)^{\frac{2}{p-2}}}+o(1)=C\Gamma(\xi)+o(1),
\]
holds true $\mathcal{C}^{1}$-uniformly with respect to $\xi$ as $\varepsilon
\rightarrow0$, where $C=\left(  \frac{1}{2}-\frac{1}{p}\right)  \int
_{\mathbb{R}^{n}}U^{p}dz$.
\end{proposition}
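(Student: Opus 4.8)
The strategy is the standard one for Lyapunov--Schmidt expansions: write
$\widetilde I_\varepsilon(\xi)=I_\varepsilon(W_{\varepsilon,\xi}+\phi_{\varepsilon,\xi})
=J_\varepsilon(W_{\varepsilon,\xi}+\phi_{\varepsilon,\xi})
+\tfrac{\omega^2}{2}G_\varepsilon(W_{\varepsilon,\xi}+\phi_{\varepsilon,\xi})$
and treat the two pieces separately, showing that the Schr\"odinger part
$J_\varepsilon$ carries the leading term $C\,\Gamma(\xi)$ while the
$G_\varepsilon$ part together with the error contribution from $\phi_{\varepsilon,\xi}$
is only $o(1)$. First I would expand $J_\varepsilon(W_{\varepsilon,\xi})$: changing
to normal coordinates $x=\exp_\xi(\varepsilon z)$, using the Taylor expansions
(\ref{eq:espg1})--(\ref{eq:espg2}) for $g^{ij}(\varepsilon z)$ and
$|g(\varepsilon z)|^{1/2}$ together with the expansions of $a,b,c$ at $\xi$,
and recalling $V^\xi(z)=\gamma(\xi)U(\sqrt{A(\xi)}\,z)$, one gets
$J_\varepsilon(W_{\varepsilon,\xi})
=\bigl(\tfrac12-\tfrac1p\bigr)\gamma(\xi)^p b(\xi) A(\xi)^{-1}\int_{\mathbb R^2}U^p\,dz+o(1)$
after a further rescaling $z\mapsto z/\sqrt{A(\xi)}$; substituting
$A(\xi)=a(\xi)/c(\xi)$, $\gamma(\xi)=(a(\xi)/b(\xi))^{1/(p-2)}$ and collecting
exponents yields exactly $C\,c(\xi)^{n/2}a(\xi)^{p/(p-2)-n/2}b(\xi)^{-2/(p-2)}$
with $n=2$ and $C=(\tfrac12-\tfrac1p)\int_{\mathbb R^n}U^p\,dz$ (this is the
content of Lemma~\ref{lem:expJeps}). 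The exponential decay of $U$ makes the
cut-off $\chi$ harmless, contributing only exponentially small terms.

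Next I would show the remaining pieces are negligible. For the correction
$\phi_{\varepsilon,\xi}$, since $W_{\varepsilon,\xi}+\phi_{\varepsilon,\xi}$
solves (\ref{red1}) and $\|\phi_{\varepsilon,\xi}\|_\varepsilon\le C\varepsilon$
by Proposition~\ref{resto}, a Taylor expansion of $I_\varepsilon$ around
$W_{\varepsilon,\xi}$ gives
$I_\varepsilon(W_{\varepsilon,\xi}+\phi_{\varepsilon,\xi})
=I_\varepsilon(W_{\varepsilon,\xi})
+I_\varepsilon'(W_{\varepsilon,\xi})[\phi_{\varepsilon,\xi}]
+O(\|\phi_{\varepsilon,\xi}\|_\varepsilon^2)$,
and one checks that $I_\varepsilon'(W_{\varepsilon,\xi})[\phi_{\varepsilon,\xi}]$
is $o(1)$: indeed $W_{\varepsilon,\xi}$ is an approximate solution so
$\|I_\varepsilon'(W_{\varepsilon,\xi})\|$ is small (of order $\varepsilon$),
and $\phi_{\varepsilon,\xi}\perp K_{\varepsilon,\xi}$ kills the components along
$Z_{\varepsilon,\xi}^i$. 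For the Proca part, using Lemma~\ref{lem:e1} and
(\ref{psipos}) one has $0<\Psi(u)<1/q$ and $0\le\Psi'(u)[u]\le 2/q$, so the
nonlinearity $g(u)=(q^2\Psi^2(u)-2q\Psi(u))u$ is controlled; more importantly,
$\Psi(W_{\varepsilon,\xi})$ solves (\ref{eq:e1}) with right-hand side
$b(x)q W_{\varepsilon,\xi}^2$ concentrating at $\xi$ with mass $O(\varepsilon^2)$
in the $L^1$ sense, so $\Psi(W_{\varepsilon,\xi})=O(\varepsilon^2)$ pointwise
on the concentration scale, whence
$G_\varepsilon(W_{\varepsilon,\xi})=\tfrac{q}{\varepsilon^2}\int_M b(x)\Psi(W_{\varepsilon,\xi})W_{\varepsilon,\xi}^2
=O(\varepsilon^2)=o(1)$. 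This is where the detailed estimates of
Lemma~\ref{fine4} (and the appendix of \cite{GMP} for $n=3$) are used.

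Finally, to upgrade the pointwise expansion to a $\mathcal C^1$-uniform one,
I would differentiate $\widetilde I_\varepsilon$ in $\xi$. Using that
$\xi\mapsto\phi_{\varepsilon,\xi}$ is $\mathcal C^1$ (Proposition~\ref{resto})
and the chain rule,
$\partial_{\xi_s}\widetilde I_\varepsilon(\xi)
=I_\varepsilon'(W_{\varepsilon,\xi}+\phi_{\varepsilon,\xi})
\bigl[\partial_{\xi_s}W_{\varepsilon,\xi}+\partial_{\xi_s}\phi_{\varepsilon,\xi}\bigr]$;
since $u_{\varepsilon,\xi}=W_{\varepsilon,\xi}+\phi_{\varepsilon,\xi}$ solves
(\ref{red1}), $I_\varepsilon'(u_{\varepsilon,\xi})$ lies in
$K_{\varepsilon,\xi}$, so only the projection of
$\partial_{\xi_s}W_{\varepsilon,\xi}$ onto $K_{\varepsilon,\xi}$ matters, and
by (\ref{eq:derWeps-1}), (\ref{eq:espE}) this is, to leading order,
$-\psi_\xi^s/\varepsilon$ plus lower-order terms, so that
$\partial_{\xi_s}\widetilde I_\varepsilon(\xi)$ reproduces
$C\,\partial_{\xi_s}\Gamma(\xi)+o(1)$ by the same computation as above
differentiated through. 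The main obstacle is the bookkeeping in this last
step: one must show that the $1/\varepsilon$ factors coming from
differentiating $W_{\varepsilon,\xi}$ are exactly compensated by the
$\varepsilon$-smallness of $I_\varepsilon'(u_{\varepsilon,\xi})$ and of the
off-leading terms, uniformly in $\xi\in M$; this is precisely the content of
Lemma~\ref{fine4}, and it is where the computation genuinely has to be carried
out carefully (and where the $n=3$ case is deferred to \cite{GMP}).
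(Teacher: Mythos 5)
Your proposal follows essentially the same route as the paper: the splitting $I_\varepsilon = J_\varepsilon + \frac{\omega^2}{2}G_\varepsilon$, the leading term extracted from $J_\varepsilon(W_{\varepsilon,\xi})$ in normal coordinates (Lemma \ref{lem:expJeps}, resting on Lemma 5.2 of \cite{CGM}), the $o(1)$ estimates for the $\phi_{\varepsilon,\xi}$-correction and for the Proca term, and the orthogonality argument for the $\mathcal{C}^1$ statement (Lemma \ref{fine4}). The one technical point where you deviate is the claim that $\Psi(W_{\varepsilon,\xi})=O(\varepsilon^2)$ pointwise: in dimension $2$ the logarithmic singularity of the Green function spoils that exact rate, and the paper instead works with the $H_g^1$ bound $\Vert\Psi(W_{\varepsilon,\xi})\Vert_g\leq C\varepsilon^{\vartheta}$, $\vartheta<2$ (Lemma \ref{lem:e3}), combined with H\"{o}lder, obtaining $G_\varepsilon(W_{\varepsilon,\xi})=O(\varepsilon^{2/3})$ --- still $o(1)$, so your conclusion is unaffected.
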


Using the previous propositions we now prove Theorem \ref{thm:main}.

\begin{proof}
[\textbf{Proof of Theorem \ref{thm:main}.}]Since $K$ is a $\mathcal{C}^{1}%
$-stable critical set for $\Gamma,$ by Proposition \ref{lem:tool2} $\tilde
{I}_{\varepsilon}$ has a critical point $\xi_{\varepsilon}\in M$ such that
$d_{g}(\xi_{\varepsilon},K)\rightarrow0$ as $\varepsilon\rightarrow0$. Hence,
$u_{\varepsilon}=W_{\varepsilon,\xi_{\varepsilon}}+\phi_{\varepsilon
,\xi_{\varepsilon}}$ is a solution of (\ref{p}), and the pair $(u_{\varepsilon
},\Psi(u_{\varepsilon}))$ is a solution to the system (\ref{eq:P}) such that
$u_{\varepsilon}$ concentrates at a point $\xi_{0}\in K$ as $\varepsilon
\rightarrow0.$
\end{proof}

\section{The finite dimensional reduction}

\label{sec:redux}This section is devoted to the proof of Proposition
\ref{resto}. We denote by
\begin{equation}
\Vert u\Vert_{g}^{2}:=\int_{M}\left(  |\nabla_{g}u|^{2}+u^{2}\right)  d\mu
_{g}\qquad\text{and}\qquad|u|_{g,q}^{q}:=\int_{M}|u|^{q}d\mu_{g}\label{norms}%
\end{equation}
the standard norms in the spaces $H_{g}^{1}(M)$ and $L^{q}(M).$

Equation (\ref{red1}) is equivalent to
\begin{equation}
L_{\varepsilon,\xi}(\phi)=N_{\varepsilon,\xi}(\phi)+S_{\varepsilon,\xi}%
(\phi)+R_{\varepsilon,\xi},\label{i1}%
\end{equation}
where
\[
N_{\varepsilon,\xi}(\phi):=\Pi_{\varepsilon,\xi}^{\perp}\left\{
i_{\varepsilon}^{\ast}\left[  b(x)\left(  f\left(  W_{\varepsilon,\xi}%
+\phi\right)  -f\left(  W_{\varepsilon,\xi}\right)  -f^{\prime}\left(
W_{\varepsilon,\xi}\right)  \right)  \phi\right]  \right\}  ,
\]%
\[
S_{\varepsilon,\xi}(\phi):=\omega^{2}\Pi_{\varepsilon,\xi}^{\perp}\left\{
i_{\varepsilon}^{\ast}\left[  b(x)\left(  q^{2}\Psi^{2}\left(  W_{\varepsilon
,\xi}+\phi\right)  -2q\Psi\left(  W_{\varepsilon,\xi}+\phi\right)  \right)
\left(  W_{\varepsilon,\xi}+\phi\right)  \right]  \right\}  ,
\]%
\[
R_{\varepsilon,\xi}:=\Pi_{\varepsilon,\xi}^{\perp}\left\{  i_{\varepsilon
}^{\ast}\left[  b(x)f\left(  W_{\varepsilon,\xi}\right)  \right]
-W_{\varepsilon,\xi}\right\}  .
\]
In order to solve equation (\ref{i1}) we will show that the operator
$T_{\varepsilon,\xi}:K_{\varepsilon,\xi}^{\perp}\rightarrow K_{\varepsilon
,\xi}^{\perp}$ defined by
\[
T_{\varepsilon,\xi}(\phi):=L_{\varepsilon,\xi}^{-1}\left(  N_{\varepsilon,\xi
}(\phi)+S_{\varepsilon,\xi}(\phi)+R_{\varepsilon,\xi}\right)
\]
has a fixed point. To this end we prove that $T_{\varepsilon,\xi}$ is a
contraction mapping on suitable ball in $H_{\varepsilon}.$ We start with an
estimate for $R_{\varepsilon,\xi}$.

\begin{lemma}
\label{re1}There exist $\varepsilon_{0}>0$ and $C>0$ such that, for any
$\xi\in M$ and any $\varepsilon\in(0,\varepsilon_{0}),$ the inequality
\[
\left\Vert R_{\varepsilon,\xi}\right\Vert _{\varepsilon}\leq C\varepsilon
\]
holds true.
\end{lemma}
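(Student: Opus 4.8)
The plan is to estimate $R_{\varepsilon,\xi}$ by using the bound \eqref{eq:istar} for the adjoint operator, which reduces the task to controlling the $L^{p'}_\varepsilon$-norm of the "error" term produced by plugging the approximate solution $W_{\varepsilon,\xi}$ into the equation. More precisely, since $\Pi^{\perp}_{\varepsilon,\xi}$ has norm at most $1$ and since $i^{\ast}_{\varepsilon}[d(x)W_{\varepsilon,\xi}/ \text{(something)}]$ essentially recovers $W_{\varepsilon,\xi}$ up to the cut-off and the curvature corrections, I would write
\[
\left\Vert R_{\varepsilon,\xi}\right\Vert_{\varepsilon}
\leq \left\Vert i^{\ast}_{\varepsilon}\!\left[b(x)f(W_{\varepsilon,\xi})\right]-W_{\varepsilon,\xi}\right\Vert_{\varepsilon}
\leq C\,\bigl|\,{-}\varepsilon^{2}\mathrm{div}_{g}(c\nabla_{g}W_{\varepsilon,\xi})+d\,W_{\varepsilon,\xi}-b\,f(W_{\varepsilon,\xi})\,\bigr|_{p',\varepsilon},
\]
using the characterization of $i^{\ast}_{\varepsilon}$ recalled just before \eqref{eq:istar}. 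So everything comes down to showing that the right-hand side is $O(\varepsilon)$.

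To do this I would pass to normal coordinates at $\xi$ and rescale by $z=\exp_{\xi}^{-1}(x)/\varepsilon$, exactly as in the proof of Proposition~\ref{prop:Zi}. On the ball $B(0,r/(2\varepsilon))$ the cut-off $\chi$ equals $1$, so there $W_{\varepsilon,\xi}(\exp_\xi(\varepsilon z))=V^{\xi}_\varepsilon(y)=V^\xi(z)$, and the rescaled operator becomes
\[
-\sum_{ij}\frac{\partial}{\partial z_i}\!\left(c(\exp_\xi(\varepsilon z))\,g^{ij}_\xi(\varepsilon z)\,|g_\xi(\varepsilon z)|^{1/2}\,\frac{\partial V^\xi}{\partial z_j}\right)+ d(\exp_\xi(\varepsilon z))\,|g_\xi(\varepsilon z)|^{1/2}\,V^\xi - b(\exp_\xi(\varepsilon z))\,|g_\xi(\varepsilon z)|^{1/2}\,(V^\xi)^{p-1}.
\]
Here I would use the Taylor expansions \eqref{eq:espg1}, \eqref{eq:espg2} for the metric together with the smoothness of $a,b,c$ (hence of $d$) to replace all coefficients by their values at $\xi$ plus an $O(\varepsilon|z|+\varepsilon^2|z|^2)$ error; the leading term vanishes identically because $V^\xi$ solves $-c(\xi)\Delta V+a(\xi)V=b(\xi)V^{p-1}$ and $d(\xi)+\omega^2 b(\xi)=a(\xi)$. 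What survives is a sum of terms of the form $\varepsilon$ times a polynomial in $z$ times $V^\xi$, $\nabla V^\xi$ or $(V^\xi)^{p-1}$, all of which decay exponentially; taking the $L^{p'}$-norm in $z$ over $\mathbb{R}^2$ (after undoing the $\varepsilon^{2/p'}$ Jacobian factor hidden in $|\cdot|_{p',\varepsilon}$) gives a finite constant, so this piece contributes $O(\varepsilon)$, uniformly in $\xi\in M$ by compactness of $M$.

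The remaining contribution comes from the annulus $r/2\le |\exp_\xi^{-1}(x)|\le r$, where $\chi$ and its derivatives are active; there $V^\xi(z)$ and $\nabla V^\xi(z)$ are evaluated at $|z|\ge r/(2\varepsilon)$ and are therefore exponentially small in $1/\varepsilon$, so this region contributes a term that is $o(\varepsilon^N)$ for every $N$ and is harmless. Combining the two regions yields $\left\Vert R_{\varepsilon,\xi}\right\Vert_\varepsilon\le C\varepsilon$ with $C$ independent of $\xi$. The main obstacle is the bookkeeping in the annulus and, more substantively, making sure the polynomial weights generated by the metric expansions are genuinely integrable against the exponentially decaying profile and its derivatives — i.e. that one really gets the clean factor $\varepsilon$ rather than $\varepsilon|\log\varepsilon|$ or worse; this is where the exponential decay of $U$ and $\nabla U$ (and hence of $V^\xi$) recalled in the text does the work. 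I note that the analogous computation for $n=3$ is carried out in the appendix of \cite{GMP}.
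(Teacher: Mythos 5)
Your overall strategy --- bound $\Vert R_{\varepsilon,\xi}\Vert_{\varepsilon}$ by $C\,\bigl|{-\varepsilon^{2}\mathrm{div}_{g}(c\nabla_{g}W_{\varepsilon,\xi})+d\,W_{\varepsilon,\xi}-b\,f(W_{\varepsilon,\xi})}\bigr|_{p^{\prime},\varepsilon}$ via (\ref{eq:istar}), rescale in normal coordinates, and use the exponential decay of $U$ together with the expansions (\ref{eq:espg1})--(\ref{eq:espg2}) --- is exactly the standard argument behind the citation the paper gives (Lemma 4.2 of \cite{CGM}), and your treatment of the cut-off annulus and of the metric corrections is fine. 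There is, however, a genuine gap at the one step that carries all the weight: the claimed identical vanishing of the leading term. With the paper's definitions, $i_{\varepsilon}^{\ast}$ inverts $-\varepsilon^{2}\mathrm{div}_{g}(c\nabla_{g}\cdot)+d(x)\cdot$ with $d=a-\omega^{2}b$, while $V^{\xi}$ is defined as the solution of $-c(\xi)\Delta V+a(\xi)V=b(\xi)V^{p-1}$. Substituting, the frozen-coefficient term is
\[
-c(\xi)\Delta V^{\xi}+d(\xi)V^{\xi}-b(\xi)(V^{\xi})^{p-1}=(d(\xi)-a(\xi))V^{\xi}=-\omega^{2}b(\xi)V^{\xi},
\]
which is not zero when $\omega\neq0$ and contributes $O(1)$, not $O(\varepsilon)$, to the $|\cdot|_{p^{\prime},\varepsilon}$-norm after rescaling. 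The identity $d(\xi)+\omega^{2}b(\xi)=a(\xi)$ that you invoke does not produce a compensating $+\omega^{2}b\,V^{\xi}$ anywhere in the quantity you are estimating: $R_{\varepsilon,\xi}$ involves only $f$, the $\omega^{2}$-terms of the equation are carried by $S_{\varepsilon,\xi}$, and those involve only $\Psi$, which tends to zero (Lemma \ref{lem:e3}), so they cannot supply the missing $\omega^{2}b\,W_{\varepsilon,\xi}$.

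The cancellation you need holds precisely when $V^{\xi}$ is the ground state of $-c(\xi)\Delta V+d(\xi)V=b(\xi)V^{p-1}$, i.e.\ when $A(\xi)$ and $\gamma(\xi)$ are built from $d=a-\omega^{2}b$ rather than from $a$; that is the potential appearing in $\langle\cdot,\cdot\rangle_{\varepsilon}$ and it is the correct limit of the effective potential $a-\omega^{2}b(1-q\Psi(u))^{2}$. So either redo the computation with that profile --- in which case your argument goes through essentially verbatim and yields the stated $O(\varepsilon)$ bound --- or note explicitly that the lemma, read with the paper's literal definition of $V^{\xi}$, fails for $\omega\neq0$. As written, ``the leading term vanishes identically'' is the assertion that needs proof, and under the stated definitions it is false.
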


\begin{proof}
See Lemma 4.2 in \cite{CGM}.
\end{proof}

Next, we give an estimate for $N_{\varepsilon,\xi}(\phi)$.

\begin{lemma}
\label{ne1}There exist $\varepsilon_{0}>0$, $C>0$ and $\widetilde{C}\in(0,1)$
such that, for any $\xi\in M,$ $\varepsilon\in(0,\varepsilon_{0})$ and $R>0,$
the inequalities
\begin{equation}
\Vert N_{\varepsilon,\xi}(\phi)\Vert_{\varepsilon}\leq C(\Vert\phi
\Vert_{\varepsilon}^{2}+\Vert\phi\Vert_{\varepsilon}^{p-1}),\label{eq:N1}%
\end{equation}
\begin{equation}
\Vert N_{\varepsilon,\xi}(\phi_{1})-N_{\varepsilon,\xi}(\phi_{2}%
)\Vert_{\varepsilon}\leq\widetilde{C}\Vert\phi_{1}-\phi_{2}\Vert_{\varepsilon
},\label{eq:N2}%
\end{equation}
hold true for $\phi,\phi_{1},\phi_{2}\in\left\{  \phi\in H_{\varepsilon}%
:\Vert\phi\Vert_{\varepsilon}\leq R\varepsilon\right\}  .$
\end{lemma}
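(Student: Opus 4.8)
The plan is to estimate $N_{\varepsilon,\xi}(\phi)$ by first peeling off the bounded operators $\Pi_{\varepsilon,\xi}^{\perp}$ and $i_{\varepsilon}^{\ast}$ and reducing everything to an $L^{p'}_{\varepsilon}$-estimate of the purely nonlinear quantity
\[
\mathcal{N}(\phi):=b(x)\bigl[f(W_{\varepsilon,\xi}+\phi)-f(W_{\varepsilon,\xi})-f'(W_{\varepsilon,\xi})\phi\bigr].
\]
Indeed, $\Pi_{\varepsilon,\xi}^{\perp}$ is a projection, hence norm-nonincreasing, and by \eqref{eq:istar} we have $\|N_{\varepsilon,\xi}(\phi)\|_{\varepsilon}\le C\,|\mathcal{N}(\phi)|_{p',\varepsilon}$, with $C$ independent of $\varepsilon$. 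Since $b$ is bounded on $M$, it suffices to control $|f(W+\phi)-f(W)-f'(W)\phi|_{p',\varepsilon}$ where $f(t)=(t^{+})^{p-1}$, $W=W_{\varepsilon,\xi}$.

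The key step is a pointwise estimate on the second-order remainder of $f$. Since $f\in\mathcal{C}^1$ and is convex-type with $f'(t)=(p-1)(t^{+})^{p-2}$, Taylor's formula with integral remainder gives, for $p\ge 3$, $|f(W+\phi)-f(W)-f'(W)\phi|\le C(|W|^{p-3}\phi^2+|\phi|^{p-1})$, and for $2<p<3$ the cruder bound $|f(W+\phi)-f(W)-f'(W)\phi|\le C\,|\phi|^{p-1}$; in either regime one arrives at a bound of the form $C(|W|^{p-2-\theta}|\phi|^{1+\theta}\wedge|\phi|^{p-1})$ that, after raising to the power $p'$, integrating, and applying Hölder, yields
\[
|\mathcal{N}(\phi)|_{p',\varepsilon}\le C\bigl(|\phi|_{p,\varepsilon}^{2}+|\phi|_{p,\varepsilon}^{p-1}\bigr)\le C\bigl(\|\phi\|_{\varepsilon}^{2}+\|\phi\|_{\varepsilon}^{p-1}\bigr),
\]
using the uniform Sobolev embedding \eqref{r2}. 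This proves \eqref{eq:N1}. For the Lipschitz-type estimate \eqref{eq:N2}, I would write $N_{\varepsilon,\xi}(\phi_1)-N_{\varepsilon,\xi}(\phi_2)$ as an integral of the derivative along the segment joining $\phi_1$ to $\phi_2$ and estimate the difference quotient of the map $\phi\mapsto f(W+\phi)-f'(W)\phi$, whose derivative is $f'(W+\phi)-f'(W)=(p-1)[(W+\phi)^{p-2}-W^{p-2}]$; this is controlled pointwise by $C(|W|^{p-3}|\phi|+|\phi|^{p-2})$ (again with the obvious modification when $p<3$), so that applying the same Hölder/Sobolev chain on the ball $\|\phi_i\|_{\varepsilon}\le R\varepsilon$ gives a constant $\widetilde C=\widetilde C(R)\,(\varepsilon^{\sigma}+\varepsilon^{(p-2)\sigma})$ for some $\sigma>0$, which is $<1$ once $\varepsilon_0$ is small. (If one wants $\widetilde C$ independent of $R$ as stated, note that $R$ only enters multiplicatively against a positive power of $\varepsilon$, so shrinking $\varepsilon_0$ depending on $R$ suffices — the statement should be read with $\varepsilon_0=\varepsilon_0(R)$.)

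The main obstacle is the low-exponent regime $2<p<3$, where $f'(t)=(p-1)(t^{+})^{p-2}$ is only Hölder continuous of order $p-2<1$, so the naive Taylor expansion with a $|W|^{p-3}$ term is not integrable near the peak of $W$. There one must use the elementary inequality $|(a+b)^{p-1}-a^{p-1}-(p-1)a^{p-2}b|\le C|b|^{p-1}$ (valid for $a\ge 0$, $1<p-1\le 2$) to get a clean bound by $|\phi|^{p-1}$ alone, losing the quadratic term but still landing inside $L^{p'}_{\varepsilon}$; this is exactly why \eqref{eq:N1} is stated with the two competing powers $\|\phi\|_{\varepsilon}^2$ and $\|\phi\|_{\varepsilon}^{p-1}$. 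A secondary technical point is checking that all constants coming from \eqref{r2}, \eqref{eq:istar} and the boundedness of $b$ are genuinely $\varepsilon$-independent, which is already guaranteed by the uniform estimates recorded in Section \ref{sec:outline}. Once these pointwise inequalities are in hand the rest is a routine Hölder computation, entirely parallel to the corresponding step in \cite{CGM}.
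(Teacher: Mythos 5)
Your proposal is correct and follows essentially the same route as the paper: both reduce to an $L^{p'}_{\varepsilon}$-estimate via the boundedness of $\Pi_{\varepsilon,\xi}^{\perp}$ and the bound \eqref{eq:istar} for $i_{\varepsilon}^{\ast}$, then apply the mean value theorem together with pointwise H\"older-type estimates on $f'$ (split according to $2<p<3$ versus $p\geq 3$, exactly as in the paper's inequality \eqref{eq:stimaf}) and the uniform embedding \eqref{r2}, with \eqref{eq:N1} obtained from the same estimates and the contraction constant made small by restricting to $\Vert\phi_i\Vert_{\varepsilon}\leq R\varepsilon$ with $\varepsilon$ small. Your observation that $\varepsilon_0$ must be allowed to depend on $R$ matches the paper's implicit reading ("provided $\Vert\phi_1\Vert_{\varepsilon}$ and $\Vert\phi_2\Vert_{\varepsilon}$ are small enough"), so no gap there.
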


\begin{proof}
By direct computation we obtain
\begin{equation}
|f^{\prime}(W_{\varepsilon,\xi}+v)-f^{\prime}(W_{\varepsilon,\xi}%
)|\leq\left\{
\begin{array}
[c]{ll}%
CW_{\varepsilon,\xi}^{p-3}|v| & 2<p<3,\\
C(W_{\varepsilon,\xi}^{p-3}|v|+|v|^{p-2}) & p\geq3.
\end{array}
\right.  \label{eq:stimaf}%
\end{equation}
From the mean value theorem and inequality (\ref{eq:istar}) we derive
\[
\Vert N_{\varepsilon,\xi}(\phi_{1})-N_{\varepsilon,\xi}(\phi_{2}%
)\Vert_{\varepsilon}\leq C\left\vert f^{\prime}(W_{\varepsilon,\xi}+\phi
_{2}+t(\phi_{1}-\phi_{2}))-f^{\prime}(W_{\varepsilon,\xi})\right\vert
_{\frac{p}{p-2},\varepsilon}\Vert\phi_{1}-\phi_{2}\Vert_{\varepsilon}.
\]
Using (\ref{eq:stimaf}) we conclude that
\[
C\left\vert f^{\prime}(W_{\varepsilon,\xi}+\phi_{2}+t(\phi_{1}-\phi
_{2}))-f^{\prime}(W_{\varepsilon,\xi})\right\vert _{\frac{p}{p-2},\varepsilon
}<1
\]
provided $\Vert\phi_{1}\Vert_{\varepsilon}$ and $\Vert\phi_{2}\Vert
_{\varepsilon}$ are small enough. The same estimates yield (\ref{eq:N1}).
\end{proof}

Now we estimate $S_{\varepsilon,\xi}(\phi)$.

\begin{lemma}
\label{se1}There exists $\varepsilon_{0}>0$ and $C>0$ such that, for any
$\xi\in M$, $\varepsilon\in(0,\varepsilon_{0})$ and $R>0,$ the inequalities
\begin{equation}
\Vert S_{\varepsilon,\xi}(\phi)\Vert_{\varepsilon}\leq C\varepsilon,
\label{se11}%
\end{equation}
\begin{equation}
\Vert S_{\varepsilon,\xi}(\phi_{1})-S_{\varepsilon,\xi}(\phi_{2}%
)\Vert_{\varepsilon}\leq\ell_{\varepsilon}\Vert\phi_{1}-\phi_{2}%
\Vert_{\varepsilon}, \label{se12}%
\end{equation}
hold true for $\phi,\phi_{1},\phi_{2}\in\left\{  \phi\in H_{\varepsilon}%
:\Vert\phi\Vert_{\varepsilon}\leq R\varepsilon\right\}  ,$ where
$\ell_{\varepsilon}\rightarrow0$ as $\varepsilon\rightarrow0$.
\end{lemma}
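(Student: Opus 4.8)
The plan is to estimate $S_{\varepsilon,\xi}(\phi)$ by combining the boundedness of $i_\varepsilon^\ast$ from \eqref{eq:istar} with pointwise bounds on $\Psi$ coming from \eqref{psipos} and Lemmas \ref{lem:e1}--\ref{lem:e2}. First I would observe that, writing $u=W_{\varepsilon,\xi}+\phi$ and using \eqref{eq:istar},
\[
\Vert S_{\varepsilon,\xi}(\phi)\Vert_{\varepsilon}\leq C\bigl|\,b(x)\bigl(2q\Psi(u)-q^{2}\Psi^{2}(u)\bigr)u\,\bigr|_{p',\varepsilon}.
\]
Since $0<\Psi(u)<1/q$ by \eqref{psipos}, the factor $2q\Psi(u)-q^{2}\Psi^{2}(u)=1-(1-q\Psi(u))^{2}$ is bounded between $0$ and $1$, so pointwise $|S$-integrand$|\leq C|u|$. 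This alone only gives $\Vert S_{\varepsilon,\xi}(\phi)\Vert_\varepsilon\leq C|u|_{p',\varepsilon}\leq C(\Vert W_{\varepsilon,\xi}\Vert_\varepsilon+\Vert\phi\Vert_\varepsilon)$, which is $O(1)$, not $O(\varepsilon)$. To gain the extra power of $\varepsilon$ one must exploit that $\Psi(u)$ is itself small: testing \eqref{eq:e1} against $\Psi(u)$ and using that $u$ is essentially $V^\xi$ rescaled by $\varepsilon$ (so $\int_M u^2\,d\mu_g=O(\varepsilon^2)$) gives $\Vert\Psi(u)\Vert_g^2\leq C\int_M u^2 \leq C\varepsilon^2$, hence $\Vert\Psi(u)\Vert_g\leq C\varepsilon$ and, by the Sobolev embedding in dimension $2$, $|\Psi(u)|_{g,s}\leq C\varepsilon$ for every $s\in[2,\infty)$. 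Then $2q\Psi(u)-q^2\Psi^2(u)\leq 2q\Psi(u)$, so the $S$-integrand is bounded by $Cb(x)\Psi(u)u$, and by Hölder
\[
\bigl|b(x)\Psi(u)u\bigr|_{p',\varepsilon}\leq C\,|\Psi(u)|_{r,\varepsilon}\,|u|_{p,\varepsilon}
\]
for suitable conjugate $r$; converting $|\Psi(u)|_{r,\varepsilon}=\varepsilon^{-2/r}|\Psi(u)|_{g,r}$ and $|u|_{p,\varepsilon}\leq C\Vert u\Vert_\varepsilon\leq C$ and using $|\Psi(u)|_{g,r}\leq C\varepsilon$, one checks the powers of $\varepsilon$ balance to give \eqref{se11}; the cut-off in $W_{\varepsilon,\xi}$ causes no trouble since it only improves decay.

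For the Lipschitz estimate \eqref{se12}, I would write $u_j=W_{\varepsilon,\xi}+\phi_j$ and, using \eqref{eq:istar} again, reduce to estimating the $|\cdot|_{p',\varepsilon}$-norm of
\[
b(x)\Bigl[\bigl(2q\Psi(u_1)-q^2\Psi^2(u_1)\bigr)u_1-\bigl(2q\Psi(u_2)-q^2\Psi^2(u_2)\bigr)u_2\Bigr].
\]
Splitting this difference into a term proportional to $(u_1-u_2)=\phi_1-\phi_2$ times $\bigl(2q\Psi(u_2)-q^2\Psi^2(u_2)\bigr)$, plus a term proportional to $u_1$ times $\bigl[\Psi(u_1)-\Psi(u_2)\bigr]\cdot(\text{bounded})$, I control the first piece by $C|\Psi(u_2)|_{g,r}\,\Vert\phi_1-\phi_2\Vert_\varepsilon$ up to powers of $\varepsilon$, which is $o(1)\Vert\phi_1-\phi_2\Vert_\varepsilon$ by the $O(\varepsilon)$ bound on $\Psi$; for the second piece, the mean value theorem applied to the $\mathcal{C}^1$-map $\Psi$ (Lemma \ref{lem:e1}) together with the explicit equation \eqref{eq:e2} for $V_u=\Psi'(u)$ gives $\Vert\Psi(u_1)-\Psi(u_2)\Vert_g\leq C\Vert\phi_1-\phi_2\Vert_g$ — in fact with a small constant since the right-hand side of \eqref{eq:e2} carries a factor $u$ which is $O(\varepsilon)$ in $L^2$ — and another Hölder/rescaling argument turns this into $\ell_\varepsilon\Vert\phi_1-\phi_2\Vert_\varepsilon$ with $\ell_\varepsilon\to0$.

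The main obstacle is bookkeeping the powers of $\varepsilon$ introduced by the rescaled norms $|\cdot|_{q,\varepsilon}=\varepsilon^{-2/q}|\cdot|_{g,q}$ together with the $\varepsilon$-dependent Sobolev inequality \eqref{r2}: one has to choose the Hölder exponents so that the $\varepsilon^{-2/r}$ coming from $|\Psi(u)|_{r,\varepsilon}$ is more than compensated by the genuine $\varepsilon$-smallness $|\Psi(u)|_{g,r}=O(\varepsilon)$ of $\Psi(u)$, uniformly in $\xi\in M$. Once the correct exponents are fixed, every estimate is a routine application of \eqref{eq:istar}, \eqref{psipos}, Lemmas \ref{lem:e1}--\ref{lem:e2}, Hölder's inequality and the compact embedding $H_\varepsilon\hookrightarrow L^q_\varepsilon$; no new idea beyond the observation that $\Psi(W_{\varepsilon,\xi}+\phi)$ is of order $\varepsilon$ in $H^1_g(M)$ is needed. (In dimension $n=3$ the same scheme works but the Sobolev exponents are constrained, which is why the paper defers those computations to the appendix of \cite{GMP}.)
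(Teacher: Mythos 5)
Your overall strategy coincides with the paper's: bound $\Vert S_{\varepsilon,\xi}(\phi)\Vert_{\varepsilon}$ via \eqref{eq:istar} by the $|\cdot|_{p^{\prime},\varepsilon}$-norm of $b(x)\left(2q\Psi-q^{2}\Psi^{2}\right)\left(W_{\varepsilon,\xi}+\phi\right)$ and exploit the smallness of $\Psi(W_{\varepsilon,\xi}+\phi)$ in Sobolev norms. But both your quantitative input and your choice of H\"older exponents are too weak, and the power counting you defer to ``one checks'' in fact fails. Testing \eqref{eq:e1} against $\Psi(u)$ and using only $\Psi(u)\leq1/q$ gives $\Vert\Psi(u)\Vert_{g}\leq C\varepsilon$; combined with your split $|\Psi(u)u|_{p^{\prime},\varepsilon}\leq|\Psi(u)|_{r,\varepsilon}|u|_{p,\varepsilon}$, which forces $r=\frac{p}{p-2}$, this yields $|\Psi(u)|_{r,\varepsilon}=\varepsilon^{-2/r}|\Psi(u)|_{g,r}=O\left(\varepsilon^{(4-p)/p}\right)$ while $|u|_{p,\varepsilon}=O(1)$. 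For $p\geq4$ this is not even $o(1)$, let alone $O(\varepsilon)$, and the lemma must hold for all $p\in(2,\infty)$ since that is exactly the range the paper needs. The problem is twofold: (i) you place the factor $W_{\varepsilon,\xi}+\phi$ in the rescaled norm $|\cdot|_{p,\varepsilon}$, which is $O(1)$, instead of in a low unrescaled norm $|\cdot|_{g,s}$ with $s$ close to $p^{\prime}$, which is $O(\varepsilon^{2/s})$ and nearly cancels the prefactor $\varepsilon^{-2/p^{\prime}}$; and (ii) the bound $\Vert\Psi(u)\Vert_{g}=O(\varepsilon)$ is not sharp enough: even with the optimal pairing it only produces $\varepsilon^{1-2/t}<\varepsilon$ for every finite $t$.

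The missing ingredient is the paper's Lemma \ref{lem:e3}: an extra H\"older step inside the energy identity for $\Psi$, namely $\int_{M}u^{2}\Psi(u)\leq|\Psi(u)|_{g,t}\,|u|_{g,2t^{\prime}}^{2}$ rather than $\frac{1}{q}\int_{M}u^{2}$, gives $\Vert\Psi(W_{\varepsilon,\xi}+\varphi)\Vert_{g}\leq C\left(\varepsilon^{\vartheta}+\Vert\varphi\Vert_{g}^{2}\right)$ for every $\vartheta=2/t^{\prime}\in(1,2)$, i.e.\ almost $\varepsilon^{2}$. Pairing $\Psi$ in $L^{t}$ with $t\geq4$ against $W_{\varepsilon,\xi}+\phi$ in $L^{s}$, $s=\frac{tp^{\prime}}{t-p^{\prime}}$, then gives the total power $\vartheta-\frac{2}{t}=2-\frac{4}{t}\geq1$, which is what \eqref{se11} requires. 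Your treatment of \eqref{se12} via the mean value theorem and the equation \eqref{eq:e2} for $\Psi^{\prime}$ is the right idea (it corresponds to the paper's Lemma \ref{lem:e7}, $\Vert\Psi^{\prime}(W_{\varepsilon,\xi}+k)[h]\Vert_{g}\leq C\Vert h\Vert_{g}(\varepsilon^{2/s}+\Vert k\Vert_{g})$), but it needs the same sharper bookkeeping; as written, the claim that no idea beyond $\Vert\Psi(u)\Vert_{g}=O(\varepsilon)$ is needed is precisely where the argument breaks down.
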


\begin{proof}
Let us prove \eqref{se11}. From the definition of $i^{\ast}$ and inequality
(\ref{eq:istar}) we derive
\begin{align*}
\Vert S_{\varepsilon,\xi}(\phi)\Vert_{\varepsilon} &  \leq C\left(  \left\vert
\Psi^{2}\left(  W_{\varepsilon,\xi}+\phi\right)  \left(  W_{\varepsilon,\xi
}+\phi\right)  \right\vert _{p^{\prime},\varepsilon}+\left\vert \Psi\left(
W_{\varepsilon,\xi}+\phi\right)  \left(  W_{\varepsilon,\xi}+\phi\right)
\right\vert _{p^{\prime},\varepsilon}\right)  \\
&  =:I_{1}+I_{2}.
\end{align*}
For any $t\in\left(  2,\infty\right)  ,$ setting $s:=\frac{tp^{\prime}%
}{t-p^{\prime}}$ and $\vartheta:=\frac{2}{t^{\prime}}\in(1,2)$ and applying
Lemma \ref{lem:e3} and Remark \ref{remark:Weps}, we obtain
\begin{align*}
I_{2} &  \leq C\frac{1}{\varepsilon^{2/p^{\prime}}}\left(  \int_{M}\left\vert
\Psi\left(  W_{\varepsilon,\xi}+\phi\right)  \right\vert ^{t}d\mu_{g}\right)
^{\frac{1}{t}}\left(  \int_{M}\left\vert W_{\varepsilon,\xi}+\phi\right\vert
^{s}d\mu_{g}\right)  ^{\frac{1}{s}}\\
&  \leq C\frac{1}{\varepsilon^{2/p^{\prime}}}\left\Vert \Psi\left(
W_{\varepsilon,\xi}+\phi\right)  \right\Vert _{g}\left(  \varepsilon^{\frac
{2}{s}}\left(  \frac{1}{\varepsilon^{2}}\int_{M}\left\vert W_{\varepsilon,\xi
}\right\vert ^{s}d\mu_{g}\right)  ^{\frac{1}{s}}+\left\vert \phi\right\vert
_{g,s}\right)  \\
&  \leq C\frac{1}{\varepsilon^{2/p^{\prime}}}\left(  \varepsilon^{\vartheta
}+\Vert\phi\Vert_{\varepsilon}^{2}\right)  \left(  \varepsilon^{\frac{2}{s}%
}+\Vert\phi\Vert_{\varepsilon}\right)  \\
&  \leq C\left(  \varepsilon^{\vartheta+\frac{2}{s}-\frac{2}{p^{\prime}}%
}+\varepsilon^{\vartheta+1-\frac{2}{p^{\prime}}}\right)  =C\left(
\varepsilon^{\vartheta-\frac{2}{t}}+\varepsilon^{\vartheta+1-\frac
{2}{p^{\prime}}}\right)  \\
&  \leq C\varepsilon
\end{align*}
for all $\Vert\phi\Vert_{\varepsilon}\leq R\varepsilon$. From this estimate we
deduce that $I_{1}\leq C\varepsilon$ and, hence, \eqref{se11} follows.

Next, we prove \eqref{se12}. From inequality (\ref{eq:istar}) we obtain that
\begin{align*}
\Vert S_{\varepsilon,\xi}(\phi_{1})-S_{\varepsilon,\xi}(\phi_{2}%
)\Vert_{\varepsilon}\leq &  C\left\vert \left[  \Psi\left(  W_{\varepsilon
,\xi}+\phi_{1}\right)  -\Psi\left(  W_{\varepsilon,\xi}+\phi_{2}\right)
\right]  W_{\varepsilon,\xi}\right\vert _{p^{\prime},\varepsilon}\\
&  +C\left\vert \left[  \Psi^{2}\left(  W_{\varepsilon,\xi}+\phi_{1}\right)
-\Psi^{2}\left(  W_{\varepsilon,\xi}+\phi_{2}\right)  \right]  W_{\varepsilon
,\xi}\right\vert _{p^{\prime},\varepsilon}\\
&  +C\left\vert \Psi\left(  W_{\varepsilon,\xi}+\phi_{1}\right)  \phi_{1}%
-\Psi\left(  W_{\varepsilon,\xi}+\phi_{2}\right)  \phi_{2}\right\vert
_{p^{\prime},\varepsilon}\\
&  +C\left\vert \Psi^{2}\left(  W_{\varepsilon,\xi}+\phi_{1}\right)  \phi
_{1}-\Psi^{2}\left(  W_{\varepsilon,\xi}+\phi_{2}\right)  \phi_{2}\right\vert
_{p^{\prime},\varepsilon}\\
= &  :I_{1}+I_{2}+I_{3}+I_{4}.
\end{align*}
By Remark \ref{remark:Weps} and Lemma \ref{lem:e7} with $s:=\frac{3}{2}$, for
some $\theta\in(0,1)$ we have that
\begin{align*}
I_{1}^{p^{\prime}} &  \leq\frac{C}{\varepsilon^{2}}\left(  \int_{M}\left\vert
\Psi^{\prime}\left(  W_{\varepsilon,\xi}+\theta\phi_{1}+(1-\theta)\phi
_{2}\right)  (\phi_{1}-\phi_{2})\right\vert ^{p}\right)  ^{\frac{p^{\prime}%
}{p}}\left(  \frac{1}{\varepsilon^{2}}\int_{M}\left\vert W_{\varepsilon,\xi
}\right\vert ^{\frac{p^{\prime}p}{p-p^{\prime}}}\right)  ^{\frac{p-p^{\prime}%
}{p}}\varepsilon^{\frac{2(p-p^{\prime})}{p}}\\
&  \leq C\frac{\varepsilon^{\frac{2(p-p^{\prime})}{p}}}{\varepsilon^{2}%
}\left(  \varepsilon^{\frac{4}{3}}+\Vert\phi_{1}\Vert_{g}+\Vert\phi_{2}%
\Vert_{g}\right)  ^{p^{\prime}}\Vert\phi_{1}-\phi_{2}\Vert_{g}^{p^{\prime}}\\
&  \leq Cl_{\varepsilon}\Vert\phi_{1}-\phi_{2}\Vert_{\varepsilon}^{p^{\prime}%
},
\end{align*}
for $\Vert\phi_{1}\Vert_{\varepsilon},\Vert\phi_{2}\Vert_{\varepsilon}\leq
R\varepsilon,$ with $l_{\varepsilon}:=\varepsilon^{\frac{p^{\prime}(p-2)}{p}%
}\rightarrow0$ as $\varepsilon\rightarrow0$. From the estimate of $I_{1}$,
recalling that $0\leq\Psi(u)\leq\frac{1}{q},$ we derive
\begin{align*}
I_{2}^{p^{\prime}} &  =\frac{1}{\varepsilon^{2}}\int_{M}\left\vert \Psi\left(
W_{\varepsilon,\xi}+\phi_{1}\right)  +\Psi\left(  W_{\varepsilon,\xi}+\phi
_{2}\right)  \right\vert ^{p^{\prime}}\left\vert \Psi\left(  W_{\varepsilon
,\xi}+\phi_{1}\right)  -\Psi\left(  W_{\varepsilon,\xi}+\phi_{2}\right)
\right\vert ^{p^{\prime}}\left\vert W_{\varepsilon,\xi}\right\vert
^{p^{\prime}}\\
&  \leq CI_{1}^{p^{\prime}}.
\end{align*}
On the other hand, choosing $\vartheta\in(1,2)$ in Lemma \ref{lem:e3} such
that $\vartheta p^{\prime}>2$ and applying Lemma \ref{lem:e7} with
$s:=\frac{3}{2}$, we obtain
\begin{align*}
I_{3}^{p^{\prime}} &  \leq\frac{1}{\varepsilon^{2}}\int_{M}\left\vert
\Psi^{\prime}\left(  W_{\varepsilon,\xi}+\theta\phi_{1}+(1-\theta)\phi
_{2}\right)  (\phi_{1}-\phi_{2})\right\vert ^{p^{\prime}}\left\vert \phi
_{1}\right\vert ^{p^{\prime}}\\
&  \qquad+\frac{1}{\varepsilon^{2}}\int_{M}\left\vert \Psi\left(
W_{\varepsilon,\xi}+\phi_{2}\right)  \right\vert ^{p^{\prime}}\left\vert
\phi_{1}-\phi_{2}\right\vert ^{p^{\prime}}\\
&  \leq C\frac{1}{\varepsilon^{2}}\left(  \int_{M}\left\vert \Psi^{\prime
}\left(  W_{\varepsilon,\xi}+\theta\phi_{1}+(1-\theta)\phi_{2}\right)
(\phi_{1}-\phi_{2})\right\vert ^{p}\right)  ^{\frac{p^{\prime}}{p}}\left(
\int_{M}\left\vert \phi_{1}\right\vert ^{\frac{p^{\prime}p}{p-p^{\prime}}%
}\right)  ^{\frac{p-p^{\prime}}{p}}\\
&  \qquad+C\frac{1}{\varepsilon^{2}}\left(  \int_{M}\left\vert \phi_{1}%
-\phi_{2}\right\vert ^{p}\right)  ^{\frac{p^{\prime}}{p}}\left(  \int
_{M}\left\vert \Psi\left(  W_{\varepsilon,\xi}+\phi_{2}\right)  \right\vert
^{\frac{p^{\prime}p}{p-p^{\prime}}}\right)  ^{\frac{p-p^{\prime}}{p}}\\
&  \leq C\frac{1}{\varepsilon^{2}}\left(  \varepsilon^{\frac{4}{3}}+\Vert
\phi_{1}\Vert_{g}+\Vert\phi_{2}\Vert_{g}\right)  ^{p^{\prime}}\Vert\phi
_{1}-\phi_{2}\Vert_{g}^{p^{\prime}}\Vert\phi_{1}\Vert_{g}^{p^{\prime}}\\
&  \qquad+C\frac{\varepsilon^{\vartheta p^{\prime}}}{\varepsilon^{2}}%
(1+\Vert\phi_{2}\Vert_{\varepsilon}^{2})\Vert\phi_{1}-\phi_{2}\Vert
_{g}^{p^{\prime}}\\
&  \leq C\left(  \frac{\varepsilon^{2p^{\prime}}}{\varepsilon^{2}}%
+\frac{\varepsilon^{\vartheta p^{\prime}}}{\varepsilon^{2}}\right)  \Vert
\phi_{1}-\phi_{2}\Vert_{\varepsilon}^{p^{\prime}}=l_{\varepsilon}\Vert\phi
_{1}-\phi_{2}\Vert_{\varepsilon}^{p^{\prime}},
\end{align*}
for $\Vert\phi_{1}\Vert_{\varepsilon},\Vert\phi_{2}\Vert_{\varepsilon}\leq
R\varepsilon,$ where $l_{\varepsilon}\rightarrow0$ as $\varepsilon
\rightarrow0$.

Finally, from the estimate of $I_{2}$ we derive $I_{4}^{p^{\prime}}\leq
CI_{3}^{p^{\prime}}$ Collecting the previous estimates we obtain \eqref{se12}.
\end{proof}

\begin{proof}
[\textbf{Proof of Proposition \ref{resto}}]From Proposition \ref{inv} we
deduce
\[
\left\Vert T_{\varepsilon,\xi}(\phi)\right\Vert _{\varepsilon}\leq C\left(
\left\Vert N_{\varepsilon,\xi}(\phi)\right\Vert _{\varepsilon}+\left\Vert
S_{\varepsilon,\xi}(\phi)\right\Vert _{\varepsilon}+\left\Vert R_{\varepsilon
,\xi}\right\Vert _{\varepsilon}\right)
\]
and
\[
\left\Vert T_{\varepsilon,\xi}(\phi_{1})-T_{\varepsilon,\xi}(\phi
_{2})\right\Vert _{\varepsilon}\leq C\left\Vert N_{\varepsilon,\xi}(\phi
_{1})-N_{\varepsilon,\xi}(\phi_{2})\right\Vert _{\varepsilon}+C\left\Vert
S_{\varepsilon,\xi}(\phi_{1})-S_{\varepsilon,\xi}(\phi_{2})\right\Vert
_{\varepsilon}.
\]
Lemmas \ref{re1}, \ref{ne1} and \ref{se1} imply that $T_{\varepsilon,\xi}$ is
a contraction in the ball centered at $0$ of radius $R\varepsilon$ in
$K_{\varepsilon,\xi}^{\perp},$ for a suitable constant $R.$ Hence,
$T_{\varepsilon,\xi}$ has a unique fixed point.

In order to prove that the map $\xi\mapsto\phi_{\varepsilon,\xi}$ is
$\mathcal{C}^{1}$ we apply the implicit function theorem to the $\mathcal{C}%
^{1}$-function $G:M\times H_{\varepsilon}\rightarrow H_{\varepsilon}$ defined
by
\begin{align*}
G(\xi,u) &  :=\Pi_{\varepsilon,\xi}^{\perp}\left\{  W_{\varepsilon,\xi}%
+\Pi_{\varepsilon,\xi}^{\perp}u-i_{\varepsilon}^{\ast}\left[  b(x)f\left(
W_{\varepsilon,\xi}+\Pi_{\varepsilon,\xi}^{\perp}u\right)  +\omega
^{2}b(x)g\left(  W_{\varepsilon,\xi}+\Pi_{\varepsilon,\xi}^{\perp}u\right)
\right]  \right\}  \\
&  \qquad+\Pi_{\varepsilon,\xi}u.
\end{align*}
Note that $G\left(  \xi,\phi_{\varepsilon,\xi}\right)  =0.$ Next we show that
the linearized operator $\frac{\partial G}{\partial u}\left(  \xi
,\phi_{\varepsilon,\xi}\right)  :H_{\varepsilon}\rightarrow H_{\varepsilon}$
defined by
\begin{align*}
&  \frac{\partial G}{\partial u}\left(  \xi,\phi_{\varepsilon,\xi}\right)
(u)\\
&  =\Pi_{\varepsilon,\xi}^{\perp}\left\{  \Pi_{\varepsilon,\xi}^{\perp
}(u)-i_{\varepsilon}^{\ast}\left[  b(x)f^{\prime}\left(  W_{\varepsilon,\xi
}+\phi_{\varepsilon,\xi}\right)  \Pi_{\varepsilon,\xi}^{\perp}(u)+\omega
^{2}b(x)g^{\prime}\left(  W_{\varepsilon,\xi}+\phi_{\varepsilon,\xi}\right)
\Pi_{\varepsilon,\xi}^{\perp}(u)\right]  \right\}  \\
&  \qquad+\Pi_{\varepsilon,\xi}(u)
\end{align*}
is invertible, provided $\varepsilon$ is small enough. For any $\phi$ with
$\left\Vert \phi\right\Vert _{\varepsilon}\leq C\varepsilon$ we have that
\begin{align*}
&  \left\Vert \frac{\partial G}{\partial u}\left(  \xi,\phi_{\varepsilon,\xi
}\right)  (u)\right\Vert _{\varepsilon}\geq C\left\Vert \Pi_{\varepsilon,\xi
}(u)\right\Vert _{\varepsilon}\\
&  \qquad+C\left\Vert \Pi_{\varepsilon,\xi}^{\perp}\left\{  \Pi_{\varepsilon
,\xi}^{\perp}(u)-i_{\varepsilon}^{\ast}\left[  f^{\prime}\left(
W_{\varepsilon,\xi}+\phi_{\varepsilon,\xi}\right)  \Pi_{\varepsilon,\xi
}^{\perp}(u)+\omega^{2}g^{\prime}\left(  W_{\varepsilon,\xi}+\phi
_{\varepsilon,\xi}\right)  \Pi_{\varepsilon,\xi}^{\perp}(u)\right]  \right\}
\right\Vert _{\varepsilon}\\
&  \geq C\left\Vert \Pi_{\varepsilon,\xi}(u)\right\Vert _{\varepsilon
}+C\left\Vert L_{\varepsilon,\xi}\left(  \Pi_{\varepsilon,\xi}^{\perp
}(u)\right)  \right\Vert _{\varepsilon}\\
&  \qquad-C\left\Vert \Pi_{\varepsilon,\xi}^{\perp}\left\{  i_{\varepsilon
}^{\ast}\left[  \left(  f^{\prime}\left(  W_{\varepsilon,\xi}+\phi
_{\varepsilon,\xi}\right)  -f^{\prime}\left(  W_{\varepsilon,\xi}\right)
\right)  \Pi_{\varepsilon,\xi}^{\perp}(u)\right]  \right\}  \right\Vert
_{\varepsilon}\\
&  \qquad-C\left\Vert \Pi_{\varepsilon,\xi}^{\perp}\left\{  i_{\varepsilon
}^{\ast}\left[  \omega^{2}g^{\prime}\left(  W_{\varepsilon,\xi}+\phi
_{\varepsilon,\xi}\right)  \Pi_{\varepsilon,\xi}^{\perp}(u)\right]  \right\}
\right\Vert _{\varepsilon}\\
&  \geq C\left\Vert \Pi_{\varepsilon,\xi}(u)\right\Vert _{\varepsilon
}+C\left\Vert \Pi_{\varepsilon,\xi}^{\perp}(u)\right\Vert _{\varepsilon
}-o(1)\left\Vert \Pi_{\varepsilon,\xi}^{\perp}(u)\right\Vert _{\varepsilon}\\
&  \geq C\left\Vert u\right\Vert _{\varepsilon}.
\end{align*}
Indeed, by (\ref{eq:stimaf}) we have
\begin{align*}
\left\Vert \Pi_{\varepsilon,\xi}^{\perp}\left\{  i_{\varepsilon}^{\ast}\left[
\left(  f^{\prime}\left(  W_{\varepsilon,\xi}+\phi_{\varepsilon,\xi}\right)
-f^{\prime}\left(  W_{\varepsilon,\xi}\right)  \right)  \Pi_{\varepsilon,\xi
}^{\perp}(u)\right]  \right\}  \right\Vert _{\varepsilon} &  \leq C\left(
\left\Vert \phi\right\Vert _{\varepsilon}^{p-2}+\left\Vert \phi\right\Vert
_{\varepsilon}\right)  \left\Vert \Pi_{\varepsilon,\xi}^{\perp}(u)\right\Vert
_{\varepsilon}\\
&  =o(1)\left\Vert \Pi_{\varepsilon,\xi}^{\perp}(u)\right\Vert _{\varepsilon}.
\end{align*}
Moreover,
\begin{align*}
&  \left\Vert \Pi_{\varepsilon,\xi}^{\perp}\left\{  i_{\varepsilon}^{\ast
}\left[  \omega^{2}g^{\prime}\left(  W_{\varepsilon,\xi}+\phi_{\varepsilon
,\xi}\right)  \Pi_{\varepsilon,\xi}^{\perp}(u)\right]  \right\}  \right\Vert
_{\varepsilon}\\
&  \leq C\left\vert \left(  W_{\varepsilon,\xi}+\phi_{\varepsilon,\xi}\right)
\left(  2q-2q^{2}\Psi\left(  W_{\varepsilon,\xi}+\phi_{\varepsilon,\xi
}\right)  \right)  \Psi^{\prime}\left(  W_{\varepsilon,\xi}+\phi
_{\varepsilon,\xi}\right)  \left[  \Pi_{\varepsilon,\xi}^{\perp}(u)\right]
\right\vert _{p^{\prime},\varepsilon}\\
&  \qquad+C\left\vert \left[  2q\Psi\left(  W_{\varepsilon,\xi}+\phi
_{\varepsilon,\xi}\right)  -q^{2}\Psi^{2}\left(  W_{\varepsilon,\xi}%
+\phi_{\varepsilon,\xi}\right)  \right]  \Pi_{\varepsilon,\xi}^{\perp
}(u)\right\vert _{p^{\prime},\varepsilon}\\
&  :=I_{1}+I_{2}.
\end{align*}
From Lemma \ref{lem:e7} we derive
\begin{align*}
I_{1} &  \leq\frac{C}{\varepsilon^{\frac{2}{p^{\prime}}}}\left\vert
W_{\varepsilon,\xi}+\phi_{\varepsilon,\xi}\right\vert _{g,2}\left\vert
\Psi^{\prime}\left(  W_{\varepsilon,\xi}+\phi_{\varepsilon,\xi}\right)
\Pi_{\varepsilon,\xi}^{\perp}(u)\right\vert _{g,\frac{4p^{\prime}}%
{2-p^{\prime}}}\left\vert 2q-2q^{2}\Psi\left(  W_{\varepsilon,\xi}%
+\phi_{\varepsilon,\xi}\right)  \right\vert _{g,\frac{4p^{\prime}}%
{2-p^{\prime}}}\\
&  \leq C\frac{1}{\varepsilon^{\frac{2}{p^{\prime}}}}\varepsilon
(\varepsilon^{\frac{4}{3}}+\varepsilon)\left\Vert \Pi_{\varepsilon,\xi}%
^{\perp}u\right\Vert _{g}\leq\varepsilon^{2-\frac{2}{p^{\prime}}}\left\Vert
\Pi_{\varepsilon,\xi}^{\perp}u\right\Vert _{g}=o(1)\left\Vert \Pi
_{\varepsilon,\xi}^{\perp}u\right\Vert _{g},
\end{align*}
and, since $0\leq\Psi(u)\leq1/q$, from Lemma \ref{lem:e3} with $\vartheta
p^{\prime}>2$ we get
\begin{align*}
I_{2} &  \leq\frac{C}{\varepsilon^{\frac{2}{p^{\prime}}}}\left\vert
\Pi_{\varepsilon,\xi}^{\perp}u\right\vert _{g,p}\left\vert \Psi\left(
W_{\varepsilon,\xi}+\phi_{\varepsilon,\xi}\right)  \right\vert _{g,\frac
{p^{\prime}p}{p-p^{\prime}}}\\
&  \leq C\frac{\varepsilon^{\vartheta}}{\varepsilon^{\frac{2}{p^{\prime}}}%
}\left(  1+\left\Vert \phi_{\varepsilon,\xi}\right\Vert _{\varepsilon}%
^{2}\right)  \left\Vert \Pi_{\varepsilon,\xi}^{\perp}u\right\Vert
_{g}=o(1)\left\Vert \Pi_{\varepsilon,\xi}^{\perp}u\right\Vert _{g}%
\end{align*}
This concludes the proof.
\end{proof}

\section{The reduced energy}

\label{sec:red-exp}This section is devoted to the proof of Proposition
\ref{lem:tool2}.

\begin{lemma}
\label{fine4}The following estimate
\begin{align}
\widetilde{I}_{\varepsilon}(\xi) &  =I_{\varepsilon}\left(  W_{\varepsilon
,\xi}+\phi_{\varepsilon,\xi}\right)  \label{fine41}\\
&  =I_{\varepsilon}\left(  W_{\varepsilon,\xi}\right)  +o(1)=J_{\varepsilon
}\left(  W_{\varepsilon,\xi}\right)  +\frac{\omega^{2}}{2}G_{\varepsilon
}\left(  W_{\varepsilon,\xi}\right)  +o\left(  1\right)  \nonumber
\end{align}
holds true $\mathcal{C}^{0}$-uniformly with respect to $\xi$ as $\varepsilon$
goes to zero. Moreover, setting $\xi(y):=\exp_{\xi}(y),$ $y\in B(0,r),$ we
have that
\begin{align*}
\left(  \frac{\partial}{\partial y_{h}}\widetilde{I}_{\varepsilon}%
(\xi(y))\right)  _{|_{y=0}} &  =\left(  \frac{\partial}{\partial y_{h}%
}I_{\varepsilon}\left(  W_{\varepsilon,\xi(y)}+\phi_{\varepsilon,\xi
(y)}\right)  \right)  _{|_{y=0}}\\
&  =\left(  \frac{\partial}{\partial y_{h}}I_{\varepsilon}\left(
W_{\varepsilon,\xi(y)}\right)  \right)  _{|_{y=0}}+o(1)\\
&  =\left(  \frac{\partial}{\partial y_{h}}J_{\varepsilon}\left(
W_{\varepsilon,\xi(y)}\right)  \right)  _{|_{y=0}}+\frac{\omega^{2}}{2}\left(
\frac{\partial}{\partial y_{h}}G_{\varepsilon}\left(  W_{\varepsilon,\xi
(y)}\right)  \right)  _{|_{y=0}}+o\left(  1\right)  ,
\end{align*}
$\mathcal{C}^{0}$-uniformly with respect to $\xi$ as $\varepsilon$ goes to zero.
\end{lemma}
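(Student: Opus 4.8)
The plan is a second-order Taylor expansion of $I_{\varepsilon}$ at $W_{\varepsilon,\xi}$, controlled by the a priori bound $\Vert\phi_{\varepsilon,\xi}\Vert_{\varepsilon}\le C\varepsilon$ of Proposition \ref{resto}. Write $W:=W_{\varepsilon,\xi}$ and $\phi:=\phi_{\varepsilon,\xi}$. For the $\mathcal{C}^{0}$ assertion I would use
\[
I_{\varepsilon}(W+\phi)=I_{\varepsilon}(W)+I_{\varepsilon}^{\prime}(W)[\phi]+\tfrac{1}{2}I_{\varepsilon}^{\prime\prime}(W+t\phi)[\phi,\phi],\qquad t\in(0,1).
\]
Since $\phi\in K_{\varepsilon,\xi}^{\perp}$ one has $I_{\varepsilon}^{\prime}(W)[\phi]=\langle\Pi_{\varepsilon,\xi}^{\perp}\nabla I_{\varepsilon}(W),\phi\rangle_{\varepsilon}=-\langle R_{\varepsilon,\xi}+S_{\varepsilon,\xi}(0),\phi\rangle_{\varepsilon}$, so Lemmas \ref{re1} and \ref{se1} give $|I_{\varepsilon}^{\prime}(W)[\phi]|\le(\Vert R_{\varepsilon,\xi}\Vert_{\varepsilon}+\Vert S_{\varepsilon,\xi}(0)\Vert_{\varepsilon})\Vert\phi\Vert_{\varepsilon}\le C\varepsilon^{2}$. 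For the quadratic remainder I would check that $\Vert I_{\varepsilon}^{\prime\prime}(u)\Vert\le C$ whenever $\Vert u\Vert_{\varepsilon}$ is bounded — using the embedding (\ref{r2}) and Hölder on the term $\varepsilon^{-2}\int_{M}b|u|^{p-2}\varphi\psi$, and Lemmas \ref{lem:e1}--\ref{lem:e2} for the terms containing $\Psi$ — which, since $\Vert W_{\varepsilon,\xi}\Vert_{\varepsilon}$ is bounded uniformly in $\xi$, yields $|\tfrac{1}{2}I_{\varepsilon}^{\prime\prime}(W+t\phi)[\phi,\phi]|\le C\Vert\phi\Vert_{\varepsilon}^{2}\le C\varepsilon^{2}$. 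All constants being independent of $\xi$, this gives $\widetilde{I}_{\varepsilon}(\xi)=I_{\varepsilon}(W_{\varepsilon,\xi})+O(\varepsilon^{2})$ $\mathcal{C}^{0}$-uniformly, and the identity $I_{\varepsilon}=J_{\varepsilon}+\tfrac{\omega^{2}}{2}G_{\varepsilon}$ gives the last equality of (\ref{fine41}).

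For the derivative, with $\xi(y)=\exp_{\xi}(y)$, the chain rule gives
\[
\Bigl(\tfrac{\partial}{\partial y_{h}}\widetilde{I}_{\varepsilon}(\xi(y))\Bigr)_{|y=0}-\Bigl(\tfrac{\partial}{\partial y_{h}}I_{\varepsilon}(W_{\varepsilon,\xi(y)})\Bigr)_{|y=0}=\bigl[I_{\varepsilon}^{\prime}(W+\phi)-I_{\varepsilon}^{\prime}(W)\bigr]\bigl[\partial_{y_{h}}W_{\varepsilon,\xi(y)}|_{0}\bigr]+I_{\varepsilon}^{\prime}(W+\phi)\bigl[\partial_{y_{h}}\phi_{\varepsilon,\xi(y)}|_{0}\bigr].
\]
For the second summand I would use that, by (\ref{red1}), $\nabla I_{\varepsilon}(W+\phi)\in K_{\varepsilon,\xi}$, i.e.\ $I_{\varepsilon}^{\prime}(W+\phi)$ annihilates $K_{\varepsilon,\xi}^{\perp}$; writing $\partial_{y_{h}}\phi=\Pi_{\varepsilon,\xi}^{\perp}\partial_{y_{h}}\phi+\Pi_{\varepsilon,\xi}\partial_{y_{h}}\phi$ only the $K_{\varepsilon,\xi}$-component survives. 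Differentiating $\langle\phi_{\varepsilon,\xi(y)},Z_{\varepsilon,\xi(y)}^{i}\rangle_{\varepsilon}\equiv0$ gives $\langle\partial_{y_{h}}\phi|_{0},Z_{\varepsilon,\xi}^{i}\rangle_{\varepsilon}=-\langle\phi,\partial_{y_{h}}Z_{\varepsilon,\xi(y)}^{i}|_{0}\rangle_{\varepsilon}$, which is $O(1)$ because $\Vert\phi\Vert_{\varepsilon}\le C\varepsilon$ while the expansions (\ref{eq:derWeps-1})--(\ref{eq:espE}) applied to $Z^{i}$ give $\Vert\partial_{y_{h}}Z_{\varepsilon,\xi}^{i}\Vert_{\varepsilon}=O(\varepsilon^{-1})$; with Proposition \ref{prop:Zi} this yields $\Vert\Pi_{\varepsilon,\xi}\partial_{y_{h}}\phi\Vert_{\varepsilon}=O(1)$. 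On the other hand $|I_{\varepsilon}^{\prime}(W+\phi)[Z_{\varepsilon,\xi}^{i}]|\le C\varepsilon$ (expand it, use $\langle\phi,Z_{\varepsilon,\xi}^{i}\rangle_{\varepsilon}=0$, the limit equation for $V^{\xi}$ to handle the linear-in-$W$ contribution, and Lemmas \ref{ne1}, \ref{se1} for the nonlinear and $\Psi$-remainders). Hence $I_{\varepsilon}^{\prime}(W+\phi)[\partial_{y_{h}}\phi|_{0}]=O(\varepsilon)=o(1)$.

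The delicate term is the first summand, because $\partial_{y_{h}}W_{\varepsilon,\xi(y)}|_{0}$ has norm of order $\varepsilon^{-1}$. By (\ref{eq:derWeps-1}), (\ref{eq:espE}) one has $\partial_{y_{h}}W_{\varepsilon,\xi(y)}|_{0}=-\varepsilon^{-1}Z_{\varepsilon,\xi}^{h}+\rho_{\varepsilon,\xi}$ with $\Vert\rho_{\varepsilon,\xi}\Vert_{\varepsilon}=O(1)$ uniformly; since $\Vert I_{\varepsilon}^{\prime}(W+\phi)-I_{\varepsilon}^{\prime}(W)\Vert\le\sup_{t}\Vert I_{\varepsilon}^{\prime\prime}(W+t\phi)\Vert\,\Vert\phi\Vert_{\varepsilon}\le C\varepsilon$, the $\rho_{\varepsilon,\xi}$-part contributes $O(\varepsilon)$. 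For the singular part, writing out $I_{\varepsilon}^{\prime}$ and using $\langle\phi,Z_{\varepsilon,\xi}^{h}\rangle_{\varepsilon}=0$ to cancel the quadratic form (the linearization of the $\Psi$-terms is cancelled as well), one is left with
\[
-\varepsilon^{-1}\bigl[I_{\varepsilon}^{\prime}(W+\phi)-I_{\varepsilon}^{\prime}(W)\bigr][Z_{\varepsilon,\xi}^{h}]=\varepsilon^{-3}\!\int_{M}b\bigl(f(W+\phi)-f(W)\bigr)Z^{h}\,d\mu_{g}+\omega^{2}\varepsilon^{-3}\!\int_{M}b\bigl(g(W+\phi)-g(W)\bigr)Z^{h}\,d\mu_{g}.
\]
In the first integral, the part beyond the linear term $(p-1)W^{p-2}\phi$ integrates to $o(\varepsilon^{2})$ by (\ref{eq:stimaf}), (\ref{r2}) and $p>2$, hence is negligible after the factor $\varepsilon^{-1}$; for the leading term I would rescale and use the equation satisfied by $\psi_{\xi}^{h}$ — namely that $Z_{\varepsilon,\xi}^{h}$ satisfies $-\varepsilon^{2}\mathrm{div}_{g}(c\nabla_{g}Z_{\varepsilon,\xi}^{h})+dZ_{\varepsilon,\xi}^{h}=(p-1)bW_{\varepsilon,\xi}^{p-2}Z_{\varepsilon,\xi}^{h}$ modulo an error of order $\varepsilon$ supported on the core of $W_{\varepsilon,\xi}$ (from the cut-off, the freezing of the coefficients at $\xi$, and the constant-coefficient linearized equation for $\psi_{\xi}^{h}$) — so that, pairing with $\phi$ and integrating by parts, $\varepsilon^{-2}\int_{M}bW_{\varepsilon,\xi}^{p-2}\phi Z_{\varepsilon,\xi}^{h}\,d\mu_{g}=\frac{1}{p-1}\langle Z_{\varepsilon,\xi}^{h},\phi\rangle_{\varepsilon}+O(\varepsilon^{2})=O(\varepsilon^{2})$, whence this contribution is $O(\varepsilon)$. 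The $g$-integral is $o(1)$ after the factor $\varepsilon^{-1}$ thanks to $\Vert\Psi(W_{\varepsilon,\xi}+v)\Vert_{H_{g}^{1}}=O(\varepsilon^{\vartheta})$ and $\Vert\Psi^{\prime}(u)[\phi]\Vert_{H_{g}^{1}}=O(\varepsilon^{2})$ (Lemmas \ref{lem:e1}, \ref{lem:e3}, \ref{lem:e7}). Collecting everything, and noting that every bound is uniform in $\xi$, the difference is $o(1)$ $\mathcal{C}^{0}$-uniformly. The main obstacle is precisely extracting one extra power of $\varepsilon$ from this $\varepsilon^{-1}$-singular term: a priori it is only $O(\Vert\phi\Vert_{\varepsilon})\cdot O(\varepsilon^{-1})=O(1)$, and the gain forces one to use simultaneously the orthogonality $\phi\perp_{\varepsilon}Z_{\varepsilon,\xi}^{h}$ and the fact that $\psi_{\xi}^{h}$ solves the linearized limit equation, which transfers the vanishing of $\langle Z_{\varepsilon,\xi}^{h},\phi\rangle_{\varepsilon}$ into the nonlinear term; the $\Psi$-contributions are comparatively harmless because $\Psi$ is already higher-order small on concentrated profiles.
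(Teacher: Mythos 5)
Your overall architecture is the same as the paper's: split the derivative into $\bigl[I_{\varepsilon}^{\prime}(W+\phi)-I_{\varepsilon}^{\prime}(W)\bigr][\partial_{y_{h}}W]$ plus $I_{\varepsilon}^{\prime}(W+\phi)[\partial_{y_{h}}\phi]$, reduce the latter to $I_{\varepsilon}^{\prime}(W+\phi)[Z^{l}]$ through the $K_{\varepsilon,\xi}$-component of $\partial_{y_{h}}\phi$, and treat the former by writing $\partial_{y_{h}}W\sim-\varepsilon^{-1}Z^{h}+O(1)$. But the emphasis is inverted relative to the paper. The paper disposes of every $J_{\varepsilon}$-estimate by quoting Lemma 5.1 of \cite{CGM} and spends essentially the entire proof on the $\Psi$-terms, i.e.\ on (\ref{new1})--(\ref{new3}); you declare these ``comparatively harmless'' in one line. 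They are small, but showing it is the actual content of the lemma: in (\ref{new2}) the test function $\partial_{y_{h}}W$ costs a factor $\varepsilon^{-1}$, and one must extract the compensating gains $\varepsilon^{4/3}$ from Lemma \ref{lem:e7} (with $s=3/2$) and $\varepsilon^{\vartheta}$ from Lemma \ref{lem:e3}, via a five-term decomposition with specific H\"older exponents ($L^{3}$--$L^{3}$--$L^{3}$, $L^{6}$--$L^{6}$--$L^{3/2}$, \dots). A proof needs at least this bookkeeping. Your $\mathcal{C}^{0}$-argument via a second-order Taylor expansion is fine and marginally different from the paper's (which uses the mean value theorem on $G_{\varepsilon}$ directly), but note that the uniform bound on $I_{\varepsilon}^{\prime\prime}$ also requires Lemma \ref{lem:e7}, not only Lemmas \ref{lem:e1}--\ref{lem:e2}, because of the term $\varepsilon^{-2}\int b\,u\,\Psi^{\prime}(u)[\varphi]\psi$.

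In the part you reconstruct instead of citing, there is a concrete gap. You claim $\varepsilon^{-2}\int_{M}bW_{\varepsilon,\xi}^{p-2}\phi Z_{\varepsilon,\xi}^{h}\,d\mu_{g}=\frac{1}{p-1}\langle Z_{\varepsilon,\xi}^{h},\phi\rangle_{\varepsilon}+O(\varepsilon^{2})$, asserting that $Z^{h}$ satisfies the linearized equation with zeroth-order coefficient $d$ up to an $O(\varepsilon)$ error. It does not: $\psi_{\xi}^{h}$ solves $-c(\xi)\Delta\psi+a(\xi)\psi=(p-1)b(\xi)(V^{\xi})^{p-2}\psi$, whereas $\langle\cdot,\cdot\rangle_{\varepsilon}$ is weighted by $d=a-\omega^{2}b$. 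Integrating by parts therefore leaves, besides the genuinely $O(\varepsilon^{2})$ errors from freezing coefficients and the metric, the residual $\frac{\omega^{2}}{p-1}\,\varepsilon^{-2}\int_{M}bZ^{h}\phi\,d\mu_{g}$, which the available bounds ($|Z^{h}|_{2,\varepsilon}=O(1)$, $|\phi|_{2,\varepsilon}\leq C\Vert\phi\Vert_{\varepsilon}\leq C\varepsilon$) control only by $O(\varepsilon)$ --- exactly the borderline order that the prefactor $\varepsilon^{-1}$ turns into $O(1)$ rather than $o(1)$. Your argument as written does not close this; the paper sidesteps the whole computation by invoking \cite{CGM}, where the quadratic part of the functional and the limit profile are built from the same coefficient so no such residual appears. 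A smaller inaccuracy: the superlinear remainder of $f$ tested against $Z^{h}$ is $O(\varepsilon^{p-1})$, which is not $o(\varepsilon^{2})$ when $2<p<3$ (though $o(\varepsilon)$, which is all you need there).
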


\begin{proof}
In Lemma 5.1 of \cite{CGM} we have proved the following two estimates:
\[
J_{\varepsilon}\left(  W_{\varepsilon,\xi(y)}+\phi_{\varepsilon,\xi
(y)}\right)  -J_{\varepsilon}\left(  W_{\varepsilon,\xi(y)}\right)  =o(1),
\]%
\[
\left(  J_{\varepsilon}^{\prime}\left(  W_{\varepsilon,\xi(y)}+\phi
_{\varepsilon,\xi(y)}\right)  -J_{\varepsilon}^{\prime}\left(  W_{\varepsilon
,\xi(y)}\right)  \right)  \left[  \left(  \frac{\partial}{\partial y_{h}%
}W_{\varepsilon,\xi(y)}\right)  _{|_{y=0}}\right]  =o(1).
\]
To complete the proof we shall prove the the following three estimates:
\begin{equation}
G_{\varepsilon}\left(  W_{\varepsilon,\xi}+\phi_{\varepsilon,\xi}\right)
-G_{\varepsilon}\left(  W_{\varepsilon,\xi}\right)  =o(1), \label{new1}%
\end{equation}%
\begin{equation}
\left[  G_{\varepsilon}^{\prime}\left(  W_{\varepsilon,\xi_{0}}+\phi
_{\varepsilon,\xi_{0}}\right)  -G_{\varepsilon}^{\prime}\left(  W_{\varepsilon
,\xi_{0}}\right)  \right]  \left[  \left(  \frac{\partial}{\partial y_{h}%
}W_{\varepsilon,\xi(y)}\right)  _{|_{y=0}}\right]  =o(1), \label{new2}%
\end{equation}%
\begin{equation}
\left(  J_{\varepsilon}^{\prime}\left(  W_{\varepsilon,\xi(y)}+\phi
_{\varepsilon,\xi(y)}\right)  +\frac{\omega^{2}}{2}G_{\varepsilon}^{\prime
}\left(  W_{\varepsilon,\xi(y)}+\phi_{\varepsilon,\xi(y)}\right)  \right)
\left[  \frac{\partial}{\partial y_{h}}\phi_{\varepsilon,\xi(y)}\right]
=o(1). \label{new3}%
\end{equation}

We start with (\ref{new1}). For some $\theta\in\lbrack0,1]$ we have
\begin{align*}
&  G_{\varepsilon}\left(  W_{\varepsilon,\xi}+\phi_{\varepsilon,\xi}\right)
-G_{\varepsilon}\left(  W_{\varepsilon,\xi}\right) \\
&  =\frac{1}{\varepsilon^{2}}\int\limits_{M}b(x)\left[  \Psi\left(
W_{\varepsilon,\xi}+\phi_{\varepsilon,\xi}\right)  \left(  W_{\varepsilon,\xi
}+\phi_{\varepsilon,\xi}\right)  ^{2}-\Psi\left(  W_{\varepsilon,\xi}\right)
\left(  W_{\varepsilon,\xi}\right)  ^{2}\right] \\
&  =\frac{1}{\varepsilon^{2}}\int\limits_{M}b(x)\Psi^{\prime}\left(
W_{\varepsilon,\xi}+\theta\phi_{\varepsilon,\xi}\right)  [\phi_{\varepsilon
,\xi}]\left(  W_{\varepsilon,\xi}\right)  ^{2}\\
&  \qquad+\frac{1}{\varepsilon^{2}}\int\limits_{M}b(x)\Psi\left(
W_{\varepsilon,\xi}+\phi_{\varepsilon,\xi}\right)  \left[  2\phi
_{\varepsilon,\xi}W_{\varepsilon,\xi}+\phi_{\varepsilon,\xi}^{2}\right]
\end{align*}
Since $\left\Vert \phi_{\varepsilon,\xi}\right\Vert _{\varepsilon}\leq
C\varepsilon$, from Lemma \ref{lem:e7} we obtain (\ref{new1}).

Next, we prove (\ref{new2}). For some $\theta\in\lbrack0,1]$ we have
\begin{align*}
&  \left[  G_{\varepsilon}^{\prime}\left(  W_{\varepsilon,\xi_{0}}%
+\phi_{\varepsilon,\xi_{0}}\right)  -G_{\varepsilon}^{\prime}\left(
W_{\varepsilon,\xi_{0}}\right)  \right]  \left[  \left(  \frac{\partial
}{\partial y_{h}}W_{\varepsilon,\xi(y)}\right)  _{|_{y=0}}\right]  \\
&  \leq\frac{q}{2\varepsilon^{2}}\left\vert \int_{M}b(x)\left\{  \left[
2\Psi(W_{\varepsilon,\xi_{0}}+\phi_{\varepsilon,\xi_{0}})-\Psi(W_{\varepsilon
,\xi_{0}})\right]  -\left[  q\Psi^{2}(W_{\varepsilon,\xi_{0}}+\phi
_{\varepsilon,\xi_{0}})-q\Psi^{2}(W_{\varepsilon,\xi_{0}})\right]  \right\}
\right.  \\
&  \qquad\qquad\left.  \cdot W_{\varepsilon,\xi_{0}}\left(  \frac{\partial
}{\partial y_{h}}W_{\varepsilon,\xi(y)}\right)  _{|_{y=0}}\right\vert \\
&  \quad+\left\vert \frac{q}{2\varepsilon^{2}}\int_{M}2b(x)\left[
\Psi(W_{\varepsilon,\xi_{0}}+\phi_{\varepsilon,\xi_{0}})-q\Psi^{2}%
(W_{\varepsilon,\xi_{0}}+\phi_{\varepsilon,\xi_{0}})\right]  \phi
_{\varepsilon,\xi_{0}}\left(  \frac{\partial}{\partial y_{h}}W_{\varepsilon
,\xi(y)}\right)  _{|_{y=0}}\right\vert \\
&  \leq\left\vert \frac{q}{2\varepsilon^{2}}\int_{M}2b(x)\Psi^{\prime
}(W_{\varepsilon,\xi_{0}}+\theta\phi_{\varepsilon,\xi_{0}})(\phi
_{\varepsilon,\xi_{0}})W_{\varepsilon,\xi_{0}}\left(  \frac{\partial}{\partial
y_{h}}W_{\varepsilon,\xi(y)}\right)  _{|_{y=0}}\right\vert \\
&  \quad+\left\vert \frac{q}{\varepsilon^{2}}\int_{M}b(x)\Psi(W_{\varepsilon
,\xi_{0}}+\theta\phi_{\varepsilon,\xi_{0}})\Psi^{\prime}(W_{\varepsilon
,\xi_{0}}+\theta\phi_{\varepsilon,\xi_{0}})(\phi_{\varepsilon,\xi_{0}%
})W_{\varepsilon,\xi_{0}}\left(  \frac{\partial}{\partial y_{h}}%
W_{\varepsilon,\xi(y)}\right)  _{|_{y=0}}\right\vert \\
&  \quad+\left\vert \frac{q}{\varepsilon^{2}}\int_{M}b(x)\Psi(W_{\varepsilon
,\xi_{0}})\phi_{\varepsilon,\xi_{0}}\left(  \frac{\partial}{\partial y_{h}%
}W_{\varepsilon,\xi(y)}\right)  _{|_{y=0}}\right\vert \\
&  \quad+\left\vert \frac{q}{\varepsilon^{2}}\int_{M}b(x)\Psi^{\prime
}(W_{\varepsilon,\xi_{0}}+\theta\phi_{\varepsilon,\xi_{0}})(\phi
_{\varepsilon,\xi_{0}})\phi_{\varepsilon,\xi_{0}}\left(  \frac{\partial
}{\partial y_{h}}W_{\varepsilon,\xi(y)}\right)  _{|_{y=0}}\right\vert \\
&  \quad+\left\vert \frac{q}{2\varepsilon^{2}}\int_{M}b(x)\Psi^{2}%
(W_{\varepsilon,\xi_{0}}+\phi_{\varepsilon,\xi_{0}})(\phi_{\varepsilon,\xi
_{0}})\left(  \frac{\partial}{\partial y_{h}}W_{\varepsilon,\xi(y)}\right)
_{|_{y=0}}\right\vert \\
&  :=I_{1}+I_{2}+I_{3}+I_{4}+I_{5}%
\end{align*}
From Lemma \ref{lem:e7}, Remark \ref{remark:Weps} and equations
(\ref{eq:derWeps-1}), (\ref{eq:espE}), (\ref{eq:espg1}), (\ref{eq:espg2}),
recalling that $\left\Vert \phi_{\varepsilon,\xi(y)}\right\Vert _{\varepsilon
}\leq C\varepsilon,$ we get
\begin{align*}
I_{1} &  \leq C\frac{\varepsilon^{\frac{4}{3}}}{\varepsilon^{2}}\left(
\int_{M}\left[  \Psi^{\prime}(W_{\varepsilon,\xi_{0}}+\phi_{\varepsilon
,\xi_{0}})(\phi_{\varepsilon,\xi_{0}})\right]  ^{3}\right)  ^{\frac{1}{3}%
}\left(  \frac{1}{\varepsilon^{2}}\int_{M}W_{\varepsilon,\xi_{0}}^{3}\right)
^{\frac{1}{3}}\left(  \frac{1}{\varepsilon^{2}}\int_{M}\left[  \left(
\frac{\partial}{\partial y_{h}}W_{\varepsilon,\xi(y)}\right)  _{|_{y=0}%
}\right]  ^{3}\right)  ^{\frac{1}{3}}\\
&  \leq C\varepsilon^{\frac{4}{3}}\left(  \int_{\mathbb{R}^{2}}\left[
\sum_{k=1}^{2}\left\vert \frac{1}{\varepsilon}\frac{\partial U}{\partial
z_{k}}(z)\chi(\varepsilon z)+\left(  \chi(\varepsilon z)+\frac{\partial\chi
}{\partial z_{k}}(\varepsilon z)\right)  U(z)\right\vert \right]
^{3}dz\right)  ^{\frac{1}{3}}\\
&  \leq C\varepsilon^{\frac{4}{3}}\frac{1}{\varepsilon}=O(\varepsilon
^{\frac{1}{3}})
\end{align*}
In a similar way, using Lemma \ref{lem:e7} and embedding the first and the
second term in $L^{6}$ and the third one in $L^{3/2}$, we get
\[
I_{4}\leq C\frac{1}{\varepsilon^{2}}[\varepsilon^{4/3}\left\Vert
\phi_{\varepsilon,\xi}\right\Vert _{\varepsilon}+\left\Vert \phi
_{\varepsilon,\xi}\right\Vert _{\varepsilon}^{2}]\left\Vert \phi
_{\varepsilon,\xi}\right\Vert _{\varepsilon}\varepsilon^{\frac{4}{3}%
-1}=O(\varepsilon^{\frac{4}{3}}).
\]
For $I_{3}$ by Lemma \ref{lem:e3} we have
\begin{align*}
I_{3} &  \leq C\frac{\varepsilon^{\frac{4}{3}}}{\varepsilon^{2}}\left(
\int_{M}\left[  \Psi(W_{\varepsilon,\xi_{0}})\right]  ^{3}\right)  ^{\frac
{1}{3}}\left(  \frac{1}{\varepsilon^{2}}\int_{M}\phi_{\varepsilon,\xi_{0}}%
^{3}\right)  ^{\frac{1}{3}}\left(  \frac{1}{\varepsilon^{2}}\int_{M}\left[
\left(  \frac{\partial}{\partial y_{h}}W_{\varepsilon,\xi(y)}\right)
_{|_{y=0}}\right]  ^{3}\right)  ^{\frac{1}{3}}\\
&  \leq C\frac{\varepsilon^{\frac{4}{3}}}{\varepsilon^{2}}\Vert\Psi
(W_{\varepsilon,\xi_{0}})\Vert_{g}\Vert\phi_{\varepsilon,\xi_{0}}%
\Vert_{\varepsilon}\left(  \int_{\mathbb{R}^{2}}\left[  \sum_{k=1}%
^{2}\left\vert \frac{1}{\varepsilon}\frac{\partial U}{\partial z_{k}}%
(z)\chi(\varepsilon z)+\left(  \chi(\varepsilon z)+\frac{\partial\chi
}{\partial z_{k}}(\varepsilon z)\right)  U(z)\right\vert \right]
^{3}dz\right)  ^{\frac{1}{3}}\\
&  \leq C\frac{\varepsilon^{\frac{4}{3}}}{\varepsilon^{2}}\varepsilon
^{\frac{5}{3}}\varepsilon\frac{1}{\varepsilon}=O(\varepsilon)
\end{align*}
and, from the estimate for $I_{3}$, since $0<\Psi(W_{\varepsilon,\xi_{0}}%
+\phi_{\varepsilon,\xi_{0}})<1/q$, we obtain
\[
I_{5}\leq CI_{3}=O(\varepsilon).
\]

Finally, we prove (\ref{new3}). Following the proof of Lemma 5.1 in
\cite{CGM}, we need only to prove that
\[
\left\vert G_{\varepsilon}^{\prime}\left(  W_{\varepsilon,\xi(y)}%
+\phi_{\varepsilon,\xi(y)}\right)  [Z_{\varepsilon,\xi(y)}^{l}]\right\vert
=o(1),
\]
that is
\[
\left\vert \frac{1}{\varepsilon^{2}}\int_{M}\left[  \Psi(W_{\varepsilon
,\xi(y)}+\phi_{\varepsilon,\xi(y)})-q\Psi^{2}(W_{\varepsilon,\xi(y)}%
+\phi_{\varepsilon,\xi(y)})\right]  (W_{\varepsilon,\xi(y)}+\phi
_{\varepsilon,\xi(y)})Z_{\varepsilon,\xi(y)}^{l}\right\vert =o(1).
\]
We have
\begin{align*}
&  \left\vert \frac{1}{\varepsilon^{2}}\int_{M}\left[  \Psi(W_{\varepsilon
,\xi(y)}+\phi_{\varepsilon,\xi(y)})-q\Psi^{2}(W_{\varepsilon,\xi(y)}%
+\phi_{\varepsilon,\xi(y)})\right]  (W_{\varepsilon,\xi(y)}+\phi
_{\varepsilon,\xi(y)})Z_{\varepsilon,\xi(y)}^{l}\right\vert \\
&  \leq\frac{C}{\varepsilon^{2}}\int_{M}\left\vert \Psi(W_{\varepsilon,\xi
(y)}+\phi_{\varepsilon,\xi(y)})(W_{\varepsilon,\xi(y)}+\phi_{\varepsilon
,\xi(y)})Z_{\varepsilon,\xi(y)}^{l}\right\vert \\
&  \qquad+\frac{C}{\varepsilon^{2}}\int_{M}\left\vert \Psi^{2}(W_{\varepsilon
,\xi(y)}+\phi_{\varepsilon,\xi(y)})(W_{\varepsilon,\xi(y)}+\phi_{\varepsilon
,\xi(y)})Z_{\varepsilon,\xi(y)}^{l}\right\vert :=I_{1}+I_{2}.
\end{align*}
By Proposition \ref{prop:Zi}, we have that $\Vert Z_{\varepsilon,\xi(y)}%
^{l}\Vert_{\varepsilon}=O(1).$ So, by Lemma \ref{lem:e3} and Remark
\ref{remark:Weps}, we have
\begin{align*}
I_{1} &  \leq C\frac{\varepsilon^{\frac{4}{3}}}{\varepsilon^{2}}\left(
\int_{M}\left[  \Psi(W_{\varepsilon,\xi_{0}}+\phi_{\varepsilon,\xi_{0}%
})\right]  ^{3}\right)  ^{\frac{1}{3}}\left(  \frac{1}{\varepsilon^{2}}%
\int_{M}(W_{\varepsilon,\xi_{0}}+\phi_{\varepsilon,\xi_{0}})^{3}\right)
^{\frac{1}{3}}\left(  \frac{1}{\varepsilon^{2}}\int_{M}|Z_{\varepsilon,\xi
(y)}^{l}|^{3}\right)  ^{\frac{1}{3}}\\
&  \leq C\frac{\varepsilon^{\frac{4}{3}}}{\varepsilon^{2}}\Vert\Psi
(W_{\varepsilon,\xi_{0}}+\phi_{\varepsilon,\xi_{0}})\Vert_{g}\left(  \Vert
W_{\varepsilon,\xi_{0}}\Vert_{3,\varepsilon}+\Vert\phi_{\varepsilon,\xi_{0}%
}\Vert_{\varepsilon}\right)  \Vert Z_{\varepsilon,\xi(y)}^{l}\Vert
_{\varepsilon}=O(\varepsilon).
\end{align*}
Again, as $0<\Psi(W_{\varepsilon,\xi_{0}}+\phi_{\varepsilon,\xi_{0}})<1/q$, we
obtain
\[
I_{2}\leq CI_{1}=O(\varepsilon).
\]
This concludes the proof.
\end{proof}

\begin{lemma}
\label{lem:expJeps}The expansion
\[
I_{\varepsilon}(W_{\varepsilon,\xi})=\left(  \frac{1}{2}-\frac{1}{p}\right)
\frac{c(\xi)^{\frac{n}{2}}a(\xi)^{\frac{p}{p-2}-\frac{n}{2}}}{b(\xi)^{\frac
{2}{p-2}}}\int_{\mathbb{R}^{n}}U^{p}dz+o(1)
\]
holds true $\mathcal{C}^{1}$-uniformly with respect to $\xi\in M$.
\end{lemma}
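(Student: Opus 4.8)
The plan is to expand $I_{\varepsilon}(W_{\varepsilon,\xi})=J_{\varepsilon}(W_{\varepsilon,\xi})+\tfrac{\omega^{2}}{2}G_{\varepsilon}(W_{\varepsilon,\xi})$ by treating the two summands separately. The first one is the reduced energy of the scalar equation and its expansion is obtained exactly as in Lemma 5.1 of \cite{CGM} (and, for $n=3$, as in the appendix of \cite{GMP}); the second one carries the Proca coupling and the whole point is to show that it is negligible, i.e.\ $o(1)$ $\mathcal{C}^{1}$-uniformly in $\xi$. Granting these two facts, the lemma follows at once.

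For $J_{\varepsilon}(W_{\varepsilon,\xi})$ I would pass to normal coordinates at $\xi$, set $x=\exp_{\xi}(\varepsilon z)$ so that $W_{\varepsilon,\xi}(\exp_{\xi}(\varepsilon z))=V^{\xi}(z)\chi(\varepsilon z)$ and $d\mu_{g}=\varepsilon^{n}|g_{\xi}(\varepsilon z)|^{1/2}dz$, and then use the Taylor expansions \eqref{eq:espg1}--\eqref{eq:espg2}, the analogous expansions of $c,d,b$ along $\exp_{\xi}$, the exponential decay of $V^{\xi}$ and $\nabla V^{\xi}$, and the fact that $\chi$ differs from $1$ only on an exponentially small region. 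This identifies $J_{\varepsilon}(W_{\varepsilon,\xi})$, up to $o(1)$, with the value at $V^{\xi}$ of the limit energy functional of the associated problem on $\mathbb{R}^{n}$; differentiating in $\xi$ and using \eqref{eq:derWeps-1}, \eqref{eq:espE} to control $\left.\partial_{y_{h}}W_{\varepsilon,\xi(y)}\right|_{y=0}$, the same holds $\mathcal{C}^{1}$-uniformly. Since $V^{\xi}$ is a critical point of that functional, testing the equation it satisfies against $V^{\xi}$ and integrating over $\mathbb{R}^{n}$ collapses the kinetic and the quadratic contributions, leaving $\left(\frac{1}{2}-\frac{1}{p}\right)b(\xi)\int_{\mathbb{R}^{n}}(V^{\xi})^{p}dz$; inserting the explicit profile $V^{\xi}(z)=\gamma(\xi)U(\sqrt{A(\xi)}z)$ and using the definitions of $A$ and $\gamma$ turns this into $\left(\frac{1}{2}-\frac{1}{p}\right)\Gamma(\xi)\int_{\mathbb{R}^{n}}U^{p}dz$, with $\Gamma$ as in Theorem~\ref{thm:main}, which is the asserted main term.

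For the Proca term, by the definition of $G_{\varepsilon}$ and H\"{o}lder's inequality,
\[
G_{\varepsilon}(W_{\varepsilon,\xi})=\frac{q}{\varepsilon^{2}}\int_{M}b(x)\Psi(W_{\varepsilon,\xi})W_{\varepsilon,\xi}^{2}\,d\mu_{g}\leq\frac{C}{\varepsilon^{2}}\,\big|\Psi(W_{\varepsilon,\xi})\big|_{g,3}\,\big|W_{\varepsilon,\xi}\big|_{g,3}^{2}.
\]
Here Remark~\ref{remark:Weps} gives $|W_{\varepsilon,\xi}|_{g,3}^{3}\leq C\varepsilon^{2}$, while Lemma~\ref{lem:e3} gives $\|\Psi(W_{\varepsilon,\xi})\|_{g}\leq C\varepsilon^{\vartheta}$ with $\vartheta\in(1,2)$ (and $\vartheta$ as close to $2$ as one likes, by an appropriate choice of H\"{o}lder exponents); hence $G_{\varepsilon}(W_{\varepsilon,\xi})\leq C\varepsilon^{\vartheta+\frac{4}{3}-2}=o(1)$. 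For the $\mathcal{C}^{1}$ statement I would write $\left.\partial_{y_{h}}G_{\varepsilon}(W_{\varepsilon,\xi(y)})\right|_{y=0}=G_{\varepsilon}'(W_{\varepsilon,\xi})\big[\left.\partial_{y_{h}}W_{\varepsilon,\xi(y)}\right|_{y=0}\big]$, use the explicit formula for $G_{\varepsilon}'$ from Lemma~\ref{lem:e2}, the bound $0\leq\Psi\leq 1/q$, Lemma~\ref{lem:e3}, and the expansions \eqref{eq:derWeps-1}, \eqref{eq:espE} to bound $\big|\left.\partial_{y_{h}}W_{\varepsilon,\xi(y)}\right|_{y=0}\big|_{g,q}$ (which is of order $\varepsilon^{\frac{n}{q}-1}$); choosing the H\"{o}lder exponents so that the $\varepsilon$-gain from $\Psi(W_{\varepsilon,\xi})$ beats the $\varepsilon^{-2}$ and the $\varepsilon^{-1}$ coming from the derivative of $W$, one again gets $o(1)$. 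The uniformity in $\xi$ is automatic, since every estimate involves only $V^{\xi}$ and the values of $c,a,b$ at $\xi$, which vary continuously on the compact manifold $M$.

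The change of variables and the Taylor bookkeeping for $J_{\varepsilon}$ are routine — identical to the scalar case. The genuine obstacle is the quantitative smallness of the Proca potential $\Psi(W_{\varepsilon,\xi})$ in the norms above: in dimension $2$ this is delicate because the second equation of \eqref{eq:P} has a Green function with logarithmic behaviour, so one can only afford $\|\Psi(W_{\varepsilon,\xi})\|_{g}=O(\varepsilon^{\vartheta})$ with $\vartheta<2$. This is precisely what Lemma~\ref{lem:e3} provides (its $n=3$ counterpart being established in the appendix of \cite{GMP}); once that smallness is available, $G_{\varepsilon}$ and $G_{\varepsilon}'$ evaluated on $W_{\varepsilon,\xi}$ and on its $\xi$-derivatives are controlled by nothing more than H\"{o}lder's inequality and the scaling of $W_{\varepsilon,\xi}$.
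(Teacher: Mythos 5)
Your proposal is correct and follows essentially the same route as the paper: the splitting $I_{\varepsilon}=J_{\varepsilon}+\frac{\omega^{2}}{2}G_{\varepsilon}$, the reliance on the scalar-case expansion of $J_{\varepsilon}(W_{\varepsilon,\xi})$ from \cite{CGM}, and the reduction of the whole matter to showing $G_{\varepsilon}(W_{\varepsilon,\xi})=o(1)$ in $\mathcal{C}^{1}$ via H\"older's inequality together with Remark~\ref{remark:Weps} and Lemmas~\ref{lem:e3} and~\ref{lem:e7}. The only differences are cosmetic choices of H\"older exponents (you use a $(3,3,3)$ splitting where the paper uses $(2,4,4)$ for the $\mathcal{C}^{0}$ bound, and you route the $\mathcal{C}^{1}$ estimate through the formula for $G_{\varepsilon}'$ from Lemma~\ref{lem:e2} rather than differentiating $\Psi(W_{\varepsilon,\xi(y)})W_{\varepsilon,\xi(y)}^{2}$ directly), and your exponent bookkeeping checks out.
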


\begin{proof}
In Lemma 5.2 of \cite{CGM} we proved that
\[
J_{\varepsilon}(W_{\varepsilon,\xi})=\left(  \frac{1}{2}-\frac{1}{p}\right)
\frac{c(\xi)^{\frac{n}{2}}a(\xi)^{\frac{p}{p-2}-\frac{n}{2}}}{b(\xi)^{\frac
{2}{p-2}}}\int_{\mathbb{R}^{n}}U^{p}dz+O(\varepsilon).
\]
Hence, it suffices to show now that $\left\vert G_{\varepsilon}(W_{\varepsilon
,\xi})\right\vert =o(1)$, $\mathcal{C}^{1}$-uniformly with respect to $\xi\in
M$.

Regarding the $\mathcal{C}^{0}$-convergence, by Remark \ref{remark:Weps} and
Lemma \ref{lem:e3}, we have that
\begin{align*}
\left\vert G_{\varepsilon}(W_{\varepsilon,\xi})\right\vert  &  \leq\frac
{C}{\varepsilon^{2}}\int_{M}\Psi(W_{\varepsilon,\xi})W_{\varepsilon,\xi}%
^{2}d\mu_{g}\\
&  \leq C\frac{\varepsilon}{\varepsilon^{2}}\left(  \int_{M}\Psi
(W_{\varepsilon,\xi})^{2}\right)  ^{\frac{1}{2}}\left(  \frac{1}%
{\varepsilon^{2}}\int_{M}W_{\varepsilon,\xi}^{4}\right)  ^{\frac{1}{2}}\\
&  \leq C\frac{1}{\varepsilon}\Vert\Psi(W_{\varepsilon,\xi})\Vert_{g}\leq
\frac{\varepsilon^{\frac{5}{3}}}{\varepsilon}=O(\varepsilon^{\frac{2}{3}}).
\end{align*}
Regarding the $\mathcal{C}^{1}$-convergence observe that
\begin{align*}
\left\vert \left.  \frac{\partial}{\partial y_{h}}G_{\varepsilon
}(W_{\varepsilon,\xi})\right\vert _{y=0}\right\vert  &  \leq\left\vert
\frac{C}{\varepsilon^{2}}\left.  \frac{\partial}{\partial y_{h}}\int_{M}%
\Psi(W_{\varepsilon,\xi(y)})W_{\varepsilon,\xi(y)}^{2}\right\vert _{y=0}%
d\mu_{g}\right\vert \\
&  \leq\left\vert \left.  \frac{C}{\varepsilon^{2}}\int_{M}\Psi(W_{\varepsilon
,\xi(y)})2W_{\varepsilon,\xi(y)}\left(  \frac{\partial}{\partial y_{h}%
}W_{\varepsilon,\xi(y)}\right)  \right\vert _{y=0}d\mu_{g}\right\vert \\
&  \qquad+\left\vert \frac{C}{\varepsilon^{2}}\int_{M}W_{\varepsilon,\xi
(y)}^{2}\Psi^{\prime}(W_{\varepsilon,\xi(y)})\left[  \left.  \frac{\partial
}{\partial y_{h}}W_{\varepsilon,\xi(y)}\right\vert _{y=0}\right]  d\mu
_{g}\right\vert \\
&  :=I_{1}+I_{2}.
\end{align*}
Now, from Remark \ref{remark:Weps}, Lemma \ref{lem:e3}, and the estimates
(\ref{eq:derWeps-1}) and (\ref{eq:espE}), we derive
\begin{align*}
I_{1} &  \leq C\frac{\varepsilon^{\frac{8}{5}}}{\varepsilon^{2}}\left(
\int_{M}\Psi(W_{\varepsilon,\xi(y)})^{5}\right)  ^{\frac{1}{5}}\left(
\frac{1}{\varepsilon^{2}}\int_{M}W_{\varepsilon,\xi(y)}^{\frac{5}{2}}\right)
^{\frac{2}{5}}\left(  \frac{1}{\varepsilon^{2}}\int_{M}\left(  \left.  \left(
\frac{\partial}{\partial y_{h}}W_{\varepsilon,\xi(y)}\right)  \right\vert
_{y=0}\right)  ^{\frac{5}{2}}\right)  ^{\frac{2}{5}}\\
&  \leq C\frac{\varepsilon^{\frac{8}{5}}}{\varepsilon^{2}}\varepsilon
^{\frac{8}{5}}\frac{1}{\varepsilon}=o(1).
\end{align*}
On the other hand, from Remark \ref{remark:Weps}, the proof of Lemma
\ref{lem:e7}, and the estimates (\ref{eq:derWeps-1}) and (\ref{eq:espE}), for
some $t\in(1,3/2)$ we obtain%
\begin{align*}
I_{2} &  \leq C\frac{\varepsilon^{\frac{2}{t}}}{\varepsilon^{2}}\left(
\frac{1}{\varepsilon^{2}}\int_{M}W_{\varepsilon,\xi(h)}^{2t}\right)
^{\frac{1}{t}}\left(  \int_{M}\left(  \Psi^{\prime}(W_{\varepsilon,\xi
(y)})\left[  \left.  \frac{\partial}{\partial y_{h}}W_{\varepsilon,\xi
(h)}\right\vert _{y=0}\right]  \right)  ^{t^{\prime}}\right)  ^{\frac
{1}{t^{\prime}}}\\
&  \leq C\frac{\varepsilon^{\frac{2}{t}}}{\varepsilon^{2}}\left\Vert
\Psi^{\prime}(W_{\varepsilon,\xi(y)})\left[  \left.  \frac{\partial}{\partial
y_{h}}W_{\varepsilon,\xi(h)}\right\vert _{y=0}\right]  \right\Vert _{g}\\
&  \leq C\frac{\varepsilon^{\frac{2}{t}}}{\varepsilon^{2}}\varepsilon
^{\frac{4}{3}}\left\vert \left.  \frac{\partial}{\partial y_{h}}%
W_{\varepsilon,\xi(h)}\right\vert _{y=0}\right\vert _{g,6}\\
&  \leq C\frac{\varepsilon^{\frac{2}{t}}}{\varepsilon^{2}}\varepsilon
^{\frac{4}{3}}\varepsilon^{\frac{1}{3}}\left(  \frac{1}{\varepsilon^{2}}%
\int_{M}\left(  \left.  \frac{\partial}{\partial y_{h}}W_{\varepsilon,\xi
(h)}\right\vert _{y=0}\right)  ^{6}\right)  ^{\frac{1}{6}}\\
&  \leq C\frac{\varepsilon^{\frac{2}{t}}}{\varepsilon^{2}}\varepsilon
^{\frac{4}{3}}\varepsilon^{\frac{1}{3}}\frac{1}{\varepsilon}=C\varepsilon
^{\frac{2}{t}-\frac{4}{3}}=o(1)\text{.}%
\end{align*}
This concludes the proof.
\end{proof}

\section{Some estimates involving $\Psi$}

\label{sec:tech}

We start by pointing out the following facts.

\begin{remark}
\label{rem:17}There exists a constant $C>0$ such that, for every $\varphi\in
H_{g}^{1}(M)$ and every $0<\varepsilon<1,$ we have
\begin{align*}
C\Vert\varphi\Vert_{g}^{2} &  =C\int_{M}\left(  |\nabla_{g}\varphi
|^{2}+\varphi^{2}\right)  d\mu_{g}\\
&  \leq\int_{M}\left(  c(x)|\nabla_{g}\varphi|^{2}+\frac{d(x)}{\varepsilon
^{2}}\varphi^{2}\right)  d\mu_{g}=\Vert\varphi\Vert_{\varepsilon}^{2}.
\end{align*}

\end{remark}

\begin{remark}
\label{remark:Weps}The following estimates
\[
\lim_{\varepsilon\rightarrow0}\frac{1}{\varepsilon^{2}}\left\vert
W_{\varepsilon,\xi}\right\vert _{g,p}^{p}\leq C|U|_{p}^{p},\qquad p\geq2,
\]%
\[
\lim_{\varepsilon\rightarrow0}\left\vert \nabla_{g}W_{\varepsilon,\xi
}\right\vert _{g,2}^{2}\leq C|\nabla U|_{2}^{2}%
\]
hold true uniformly with respect to $\xi\in M$.
\end{remark}

Abusing notation we write%
\[
\Vert u\Vert_{g}^{2}=\int_{M}\left(  c(x)|\nabla_{g}\varphi|^{2}%
+b(x)u^{2}\right)  d\mu_{g}.
\]
This norm is equivalent to the standard norm (\ref{norms}) of $H_{g}^{1}(M).$
From equations (\ref{eq:e1}), (\ref{psipos}) and (\ref{eq:e2}) we obtain%
\begin{align}
\Vert\Psi(u)\Vert_{g}^{2} &  =\int_{M}b(x)qu^{2}\Psi(u)d\mu_{g}-\int
_{M}b(x)q^{2}u^{2}\left(  \Psi(u)\right)  ^{2}d\mu_{g}\label{psi1}\\
&  \leq C\int_{M}u^{2}\Psi(u)d\mu_{g},\nonumber
\end{align}%
\begin{align}
\Vert\Psi^{\prime}(u)\left[  h\right]  \Vert_{g}^{2} &  =\int_{M}%
2b(x)qu(1-q\Psi(u))h\Psi^{\prime}(u)\left[  h\right]  d\mu_{g}\label{psi2}\\
&  \qquad-\int_{M}b(x)q^{2}u^{2}\left(  \Psi^{\prime}(u)\left[  h\right]
\right)  ^{2}d\mu_{g}\nonumber\\
&  \leq C\int_{M}\left\vert u\right\vert \left\vert h\right\vert \left\vert
\Psi^{\prime}(u)\left[  h\right]  \right\vert d\mu_{g},\nonumber
\end{align}
for all $u,h\in H_{g}^{1}(M).$

\begin{lemma}
\label{lem:e3}Given $\vartheta\in(1,2)$ there is a constant $C>0$ such that
the inequality
\[
\Vert\Psi(W_{\varepsilon,\xi}+\varphi)\Vert_{g}\leq C(\varepsilon^{\vartheta
}+\Vert\varphi\Vert_{g}^{2})
\]
holds true for every $\varphi\in H_{g}^{1}(M)$, $\xi\in M$ and small enough
$\varepsilon>0.$
\end{lemma}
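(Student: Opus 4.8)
The plan is to estimate $\Vert\Psi(W_{\varepsilon,\xi}+\varphi)\Vert_g$ by plugging $u=W_{\varepsilon,\xi}+\varphi$ into the variational identity \eqref{psi1}, which gives
\[
\Vert\Psi(W_{\varepsilon,\xi}+\varphi)\Vert_g^2\leq C\int_M (W_{\varepsilon,\xi}+\varphi)^2\,\Psi(W_{\varepsilon,\xi}+\varphi)\,d\mu_g.
\]
First I would split $(W_{\varepsilon,\xi}+\varphi)^2\leq 2W_{\varepsilon,\xi}^2+2\varphi^2$ and treat the two resulting integrals separately. For the $\varphi^2$-term, since $0<\Psi<1/q$ by \eqref{psipos}, one simply bounds it by $C\int_M\varphi^2\leq C\Vert\varphi\Vert_g^2$, and since for $\Vert\varphi\Vert_g$ small this is dominated by $\Vert\varphi\Vert_g^2$ when the latter is $\leq 1$ — more carefully, $\int_M\varphi^2\Psi\leq\frac1q\Vert\varphi\Vert_g^2$ unconditionally — this contributes a term of order $\Vert\varphi\Vert_g^2$, hence after taking square roots a term of order $\Vert\varphi\Vert_g$; but we want $\Vert\varphi\Vert_g^2$ on the right, so I must be a little more careful and instead keep $\int_M\varphi^2\Psi\,d\mu_g$ bounded via Hölder with a gain: write it as $\leq|\varphi|_{g,p}^2\,|\Psi|_{g,p/(p-2)}$ and use that $|\Psi|_{g,p/(p-2)}\leq C\Vert\Psi\Vert_g$, then absorb. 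Actually the cleanest route is to feed the crude bound $\Vert\Psi(u)\Vert_g\leq C$ (from \eqref{psi1} and $0<\Psi<1/q$, giving $\Vert\Psi\Vert_g^2\leq\frac Cq\int_M u^2\leq C$ whenever $\Vert u\Vert_g$ stays bounded — but $\Vert u\Vert_g$ need not be bounded here) back into the Hölder estimate; so the genuinely correct scheme is the iterative one below.

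The key mechanism is a \emph{bootstrap}. Using Hölder with exponents $p$, $p$, and $p/(p-2)$ (valid since $\frac1p+\frac1p+\frac{p-2}{p}=1$), one gets
\[
\int_M u^2\Psi(u)\,d\mu_g\leq |u|_{g,p}^2\,|\Psi(u)|_{g,p/(p-2)}\leq C|u|_{g,p}^2\,\Vert\Psi(u)\Vert_g,
\]
where the last step is the Sobolev embedding $H^1_g(M)\hookrightarrow L^{p/(p-2)}(M)$ (here $n=2$ so all exponents are admissible). Combined with \eqref{psi1} this yields $\Vert\Psi(u)\Vert_g^2\leq C|u|_{g,p}^2\Vert\Psi(u)\Vert_g$, hence
\[
\Vert\Psi(W_{\varepsilon,\xi}+\varphi)\Vert_g\leq C\,|W_{\varepsilon,\xi}+\varphi|_{g,p}^2\leq C\bigl(|W_{\varepsilon,\xi}|_{g,p}^2+|\varphi|_{g,p}^2\bigr).
\]
Now I invoke Remark \ref{remark:Weps}: $|W_{\varepsilon,\xi}|_{g,p}^p\leq C\varepsilon^2|U|_p^p(1+o(1))$, so $|W_{\varepsilon,\xi}|_{g,p}^2\leq C\varepsilon^{4/p}$, and the Sobolev embedding gives $|\varphi|_{g,p}^2\leq C\Vert\varphi\Vert_g^2$. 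Since $4/p>\vartheta$ fails in general (for $p$ close to $2$, $4/p$ is close to $2>\vartheta$, good; for large $p$, $4/p\to 0$), I need to be more precise: interpolate between $L^2$ and $L^{p}$. Write $|W_{\varepsilon,\xi}|_{g,p}\leq|W_{\varepsilon,\xi}|_{g,2}^{1-t}|W_{\varepsilon,\xi}|_{g,\bar p}^{t}$ for a conveniently large $\bar p$, using $|W_{\varepsilon,\xi}|_{g,s}^s\leq C\varepsilon^2$ for every $s\geq 2$; this gives $|W_{\varepsilon,\xi}|_{g,p}^2\leq C\varepsilon^{4/p}$, and since $4/p$ can be made as close to $2$ as one needs only when $p$ is near $2$, the honest statement is: for any fixed $\vartheta<2$, choosing the estimate carefully one has $|W_{\varepsilon,\xi}|_{g,p}^2\geq c\varepsilon^{4/p}$ — the exponent is forced. \textbf{Here the resolution is that the lemma only asserts $\varepsilon^\vartheta$ for a \emph{given} $\vartheta\in(1,2)$, and one checks $4/p\geq\vartheta$ is NOT what is used}; rather the estimate $|W_{\varepsilon,\xi}|_{g,p}^2\leq C\varepsilon^{2\cdot 2/p}$ is replaced by using $p\geq2$ so $2/p\leq 1$, hence $|W_{\varepsilon,\xi}|_{g,p}^2 = (|W_{\varepsilon,\xi}|_{g,p}^p)^{2/p}\leq (C\varepsilon^2)^{2/p}=C\varepsilon^{4/p}$; and for $n=2$, $p$ is at most arbitrary but $W$ is concentrated so $|W_{\varepsilon,\xi}|_{g,p}^p\sim\varepsilon^2\int_{\mathbb R^2}U^p$ regardless of $p$, giving exponent $4/p\in(0,2)$. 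To land on $\vartheta$ I would simply note $4/p$ may be below $\vartheta$, so instead I bound $|W_{\varepsilon,\xi}|_{g,p}$ by $|W_{\varepsilon,\xi}|_{g,2}^{2/p}|W_{\varepsilon,\xi}|_{g,\infty}^{1-2/p}$... which fails as $W$ is not bounded uniformly. The working fix is: take any $\vartheta\in(1,2)$, pick $q_0\in[2,\infty)$ with $4/q_0\geq\vartheta$ (possible: $q_0\leq 4/\vartheta$, and $4/\vartheta>2\geq2$, so such $q_0$ with $q_0\leq 4/\vartheta$ exists, e.g. $q_0=4/\vartheta$), and apply Hölder with $L^{q_0}$ in the step $\int u^2\Psi\leq|u|_{g,q_0}^2|\Psi|_{g,q_0/(q_0-2)}$; then $|W_{\varepsilon,\xi}|_{g,q_0}^2\leq C\varepsilon^{4/q_0}\leq C\varepsilon^\vartheta$.

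So the clean argument is: fix $\vartheta\in(1,2)$, set $t:=\frac{4}{\vartheta}\in(2,4)$, use Hölder $\frac1t+\frac1t+\frac{t-2}{t}=1$ to get $\int_M u^2\Psi(u)\leq|u|_{g,t}^2\,|\Psi(u)|_{g,\frac{t}{t-2}}\leq C|u|_{g,t}^2\Vert\Psi(u)\Vert_g$ via $H^1_g\hookrightarrow L^{t/(t-2)}$ (here $n=2$, so this embedding holds for all exponents). Feed into \eqref{psi1}:
\[
\Vert\Psi(W_{\varepsilon,\xi}+\varphi)\Vert_g\leq C\,|W_{\varepsilon,\xi}+\varphi|_{g,t}^2\leq C|W_{\varepsilon,\xi}|_{g,t}^2+C|\varphi|_{g,t}^2\leq C\varepsilon^{\vartheta}+C\Vert\varphi\Vert_g^2,
\]
where $|W_{\varepsilon,\xi}|_{g,t}^2=(|W_{\varepsilon,\xi}|_{g,t}^t)^{2/t}\leq(C\varepsilon^2)^{2/t}=C\varepsilon^{4/t}=C\varepsilon^\vartheta$ by Remark \ref{remark:Weps}, and $|\varphi|_{g,t}^2\leq C\Vert\varphi\Vert_g^2$ by Sobolev. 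This gives the claim. The main obstacle — and the only place requiring thought — is organizing the Hölder exponents so that the power of $W_{\varepsilon,\xi}$ that comes out matches the prescribed $\vartheta$ while simultaneously keeping $\Vert\Psi\Vert_g$ (rather than $\Vert\Psi\Vert_g^2$) on the right so that \eqref{psi1} closes after one division; everything else is the Sobolev embedding in dimension two and the concentration estimate from Remark \ref{remark:Weps}.
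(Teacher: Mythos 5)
Your final ``clean argument'' is correct and is essentially identical to the paper's proof: both apply the estimate \eqref{psi1}, use H\"older with exponents tuned so that the $W_{\varepsilon,\xi}$ factor produces exactly $\varepsilon^{\vartheta}$ (your $t=4/\vartheta$ is precisely the paper's $2t'$ with $\vartheta=2/t'$), absorb $\Vert\Psi\Vert_g$ via the two-dimensional Sobolev embedding, and conclude with Remark \ref{remark:Weps}. The exploratory detours in the middle are harmless since you correctly identify and execute the working choice of exponents at the end.
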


\begin{proof}
Let $t\in(2,\infty)$ be such that $\frac{2}{t^{\prime}}=\vartheta$ where
$t^{\prime}$ is the exponent conjugate to $t.$ From inequality (\ref{psi1}) we
obtain
\begin{align*}
\Vert\Psi(W_{\varepsilon,\xi}+\varphi)\Vert_{g}^{2} &  \leq C\left(  \int
_{M}\left[  \Psi\left(  W_{\varepsilon,\xi}+\varphi\right)  \right]  ^{t}%
d\mu_{g}\right)  ^{1/t}\left(  \int_{M}(W_{\varepsilon,\xi}+\varphi
)^{2t^{\prime}}\right)  ^{1/t^{\prime}}\\
&  \leq C\Vert\Psi(W_{\varepsilon,\xi}+\varphi)\Vert_{g}\left\vert
W_{\varepsilon,\xi}+\varphi\right\vert _{g,2t^{\prime}}^{2}.
\end{align*}
Thus, by Remark \ref{remark:Weps},
\begin{align*}
\Vert\Psi(W_{\varepsilon,\xi}+\varphi)\Vert_{g} &  \leq C\left(
\varepsilon^{2/t^{\prime}}\left(  \frac{1}{\varepsilon^{2}}\int_{M}%
W_{\varepsilon,\xi}^{2t^{\prime}}\right)  ^{1/t^{\prime}}+\left(  \int
_{M}\varphi^{2t^{\prime}}\right)  ^{1/t^{\prime}}\right)  \\
&  \leq C(\varepsilon^{\vartheta}+\Vert\varphi\Vert_{g}^{2}),
\end{align*}
as claimed.
\end{proof}

\begin{lemma}
\label{lem:e7}Given $s\in(1,2)$ there is a constant $C>0$ such that the
inequality
\[
\Vert\Psi^{\prime}(W_{\varepsilon,\xi}+k)[h]\Vert_{g}\leq C\Vert h\Vert
_{g}\left(  \varepsilon^{\frac{2}{s}}+\Vert k\Vert_{g}\right)
\]
holds true for every $k,h\in H_{g}^{1}(M)$, $\xi\in M$ and small enough
$\varepsilon>0.$
\end{lemma}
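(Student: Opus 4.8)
The plan is to mimic the proof of Lemma~\ref{lem:e3}, using the linearized identity (\ref{psi2}) in place of (\ref{psi1}). Write $u:=W_{\varepsilon,\xi}+k$. From (\ref{psi2}) we have
\[
\Vert\Psi'(u)[h]\Vert_g^2 \leq C\int_M |u|\,|h|\,|\Psi'(u)[h]|\,d\mu_g .
\]
First I would apply H\"{o}lder's inequality with three exponents $s,\tau,\rho\in(1,\infty)$ satisfying $\tfrac1s+\tfrac1\tau+\tfrac1\rho=1$; since $s\in(1,2)$ one may take $\tau=\rho=\tfrac{2s}{s-1}\in(4,\infty)$, so that the factor $|u|$ lands in $L^s$ while the other two factors land in spaces with a \emph{finite} exponent. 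This gives $\Vert\Psi'(u)[h]\Vert_g^2\leq C\,|u|_{g,s}\,|h|_{g,\tau}\,|\Psi'(u)[h]|_{g,\rho}$.

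Because $\dim M=2$, the embeddings $H^1_g(M)\hookrightarrow L^\tau(M)$ and $H^1_g(M)\hookrightarrow L^\rho(M)$ are continuous (both exponents being finite), and $\Psi'(u)[h]\in H^1_g(M)$ by Lemma~\ref{lem:e1}; using also that $\Vert\cdot\Vert_g$ is equivalent to the standard norm of $H^1_g(M)$, we get $|h|_{g,\tau}\leq C\Vert h\Vert_g$ and $|\Psi'(u)[h]|_{g,\rho}\leq C\Vert\Psi'(u)[h]\Vert_g$. Dividing by $\Vert\Psi'(u)[h]\Vert_g$ (the estimate being trivial when it vanishes) and using the triangle inequality for $|\cdot|_{g,s}$,
\[
\Vert\Psi'(W_{\varepsilon,\xi}+k)[h]\Vert_g \leq C\,|W_{\varepsilon,\xi}+k|_{g,s}\,\Vert h\Vert_g \leq C\bigl(|W_{\varepsilon,\xi}|_{g,s}+|k|_{g,s}\bigr)\Vert h\Vert_g .
\]

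To finish I would bound the two terms separately. Since $M$ is compact and $s<2$, H\"{o}lder gives $|k|_{g,s}\leq C|k|_{g,2}\leq C\Vert k\Vert_g$. For the concentrating term, the change of variables $x=\exp_\xi(\varepsilon z)$ together with the exponential decay of $U$ (so that $U\in L^s(\mathbb{R}^2)$) yields, exactly as in Remark~\ref{remark:Weps}, $\tfrac1{\varepsilon^2}|W_{\varepsilon,\xi}|_{g,s}^s\leq C|U|_s^s$ uniformly in $\xi$ for $\varepsilon$ small, hence $|W_{\varepsilon,\xi}|_{g,s}\leq C\varepsilon^{2/s}$; combining the two estimates gives the claim. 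The only point that needs a little care is the bookkeeping of the H\"{o}lder exponents: in dimension $2$ there is no embedding into $L^\infty$, so $\tau$ and $\rho$ must be kept finite, while $W_{\varepsilon,\xi}$ has to be placed in $L^s$ with the subcritical index $s<2$, which forces one to invoke the $s\in(1,2)$ case of the scaling estimate behind Remark~\ref{remark:Weps} rather than its stated form.
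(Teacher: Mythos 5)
Your proof is correct and follows essentially the same route as the paper: both start from the identity (\ref{psi2}), apply H\"older with the exponent triple $(s,2s',2s')$, use the two-dimensional Sobolev embeddings into $L^{q}$ for finite $q$, and invoke the scaling bound $|W_{\varepsilon,\xi}|_{g,s}\leq C\varepsilon^{2/s}$ (the $s<2$ extension of Remark~\ref{remark:Weps}, which the paper also uses implicitly). The only cosmetic difference is that you keep $W_{\varepsilon,\xi}+k$ together in $L^{s}$ and split by the triangle inequality afterwards (bounding $|k|_{g,s}\leq C|k|_{g,2}$ by finiteness of the volume), whereas the paper splits first and treats the $k$-term with a $(3,3,3)$ H\"older; both yield the same estimate.
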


\begin{proof}
From inequality (\ref{psi2}) we obtain,
\begin{align*}
\Vert\Psi^{\prime}(W_{\varepsilon,\xi}+k)[h]\Vert_{g}^{2} &  \leq C\int
_{M}\left\vert W_{\varepsilon,\xi}+k\right\vert \left\vert h\right\vert
\left\vert \Psi^{\prime}(W_{\varepsilon,\xi}+k)\left[  h\right]  \right\vert
d\mu_{g}\\
&  \leq C\left(  \int_{M}\left\vert W_{\varepsilon,\xi}\right\vert \left\vert
h\right\vert \left\vert \Psi^{\prime}(W_{\varepsilon,\xi}+k)\left[  h\right]
\right\vert d\mu_{g}+\int_{M}\left\vert k\right\vert \left\vert h\right\vert
\left\vert \Psi^{\prime}(W_{\varepsilon,\xi}+k)\left[  h\right]  \right\vert
d\mu_{g}\right)  \\
&  =:I_{1}+I_{2}.
\end{align*}
Set $t:=2s^{\prime}\in(4,\infty),$ where $s^{\prime}$ is the conjugate
exponent to $s.$ Using Remark \ref{remark:Weps} we conclude that%
\begin{align*}
I_{1} &  \leq C\left\vert \Psi^{\prime}(W_{\varepsilon,\xi}+k)[h]\right\vert
_{g,t}|h|_{g,t}\left\vert W_{\varepsilon,\xi}\right\vert _{g,s}\\
&  =C\Vert\Psi^{\prime}(W_{\varepsilon,\xi}+k)[h]\Vert_{g}\Vert h\Vert
_{g}\varepsilon^{\frac{2}{s}}\left(  \frac{1}{\varepsilon^{2}}\int
_{M}W_{\varepsilon,\xi}^{s}\right)  ^{1/s}\\
&  =C\Vert\Psi^{\prime}(W_{\varepsilon,\xi}+k)[h]\Vert_{g}\Vert h\Vert
_{g}\varepsilon^{\frac{2}{s}}.
\end{align*}
Since%
\[
I_{2}\leq C\left\vert \Psi^{\prime}(W_{\varepsilon,\xi}+k)[h]\right\vert
_{g,3}|h|_{g,3}\left\vert k\right\vert _{g,3}\leq C\Vert\Psi^{\prime
}(W_{\varepsilon,\xi}+k)[h]\Vert_{g}\Vert h\Vert_{g}\Vert k\Vert_{g},
\]
the claim follows.
\end{proof}

\begin{lemma}
\label{lem:e5}Consider the functions
\[
\tilde{v}_{\varepsilon,\xi}(z):=\left\{
\begin{array}
[c]{ll}%
\Psi(W_{\varepsilon,\xi})\left(  \exp_{\xi}(\varepsilon z)\right)  & \text{
for }z\in B(0,r/\varepsilon),\\
0 & \text{ for }z\in\mathbb{R}^{2}\smallsetminus B(0,r/\varepsilon).
\end{array}
\right.
\]
Then, for any $\vartheta\in(1,2)$, there exists a constant $C>0$, independent
of $\varepsilon,\xi$, such that
\begin{align*}
\left\vert \tilde{v}_{\varepsilon,\xi}(z)\right\vert _{L^{2}(\mathbb{R}^{3})}
&  \leq C\varepsilon^{\vartheta-1},\\
\left\vert \nabla\tilde{v}_{\varepsilon,\xi}(z)\right\vert _{L^{2}%
(\mathbb{R}^{3})}  &  \leq C\varepsilon^{\vartheta}.
\end{align*}

\end{lemma}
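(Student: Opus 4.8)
The plan is to reduce the two estimates to the single global bound $\Vert\Psi(W_{\varepsilon,\xi})\Vert_{g}\le C\varepsilon^{\vartheta}$, which is exactly Lemma \ref{lem:e3} applied with $\varphi=0$, and then to pass to normal coordinates at $\xi$ and rescale. Concretely, Lemma \ref{lem:e3} with $\varphi=0$ gives, for the fixed $\vartheta\in(1,2)$,
\[
\int_{M}\left(c(x)|\nabla_{g}\Psi(W_{\varepsilon,\xi})|^{2}+b(x)\Psi(W_{\varepsilon,\xi})^{2}\right)d\mu_{g}=\Vert\Psi(W_{\varepsilon,\xi})\Vert_{g}^{2}\le C\varepsilon^{2\vartheta},
\]
and since $b$ and $c$ are continuous and strictly positive on the compact manifold $M$, the same bound holds, with a new constant independent of $\xi$ and of small $\varepsilon$, for $\int_{M}\bigl(|\nabla_{g}\Psi(W_{\varepsilon,\xi})|^{2}+\Psi(W_{\varepsilon,\xi})^{2}\bigr)\,d\mu_{g}$.

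Next I would set $w:=\Psi(W_{\varepsilon,\xi})$ and use that $\tilde v_{\varepsilon,\xi}(z)=(w\circ\exp_{\xi})(\varepsilon z)$ on $B(0,r/\varepsilon)$ and vanishes outside. Since $r$ is smaller than the injectivity radius and $M$ is compact, the metric components $(g_{ij})$ read in normal coordinates on $B(0,r)$ are, uniformly in the base point $\xi$, bounded and uniformly positive definite; in particular there is a constant $c_{0}>0$, independent of $\xi$, with $|g_{\xi}(y)|^{1/2}\ge c_{0}$ and $|\eta|^{2}\le c_{0}^{-1}g_{\xi}^{ij}(y)\eta_{i}\eta_{j}$ for all $y\in B(0,r)$ and $\eta\in\mathbb{R}^{2}$ (this is the uniform form of the expansions (\ref{eq:espg1})--(\ref{eq:espg2})). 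The change of variables $y=\varepsilon z$ then gives
\[
\int_{B(0,r/\varepsilon)}\tilde v_{\varepsilon,\xi}(z)^{2}\,dz=\frac{1}{\varepsilon^{2}}\int_{B(0,r)}(w\circ\exp_{\xi})(y)^{2}\,dy\le\frac{C}{\varepsilon^{2}}\int_{B_{g}(\xi,r)}w^{2}\,d\mu_{g}\le\frac{C}{\varepsilon^{2}}\,\varepsilon^{2\vartheta},
\]
which yields the first estimate after taking square roots. For the gradient, the chain rule gives $\partial_{z_{i}}\tilde v_{\varepsilon,\xi}(z)=\varepsilon\,\partial_{y_{i}}(w\circ\exp_{\xi})(\varepsilon z)$, so the same substitution, together with the comparison between the Euclidean gradient in normal coordinates and $|\nabla_{g}w|$ and the uniform bound on $|g_{\xi}|^{1/2}$, yields
\[
\int_{B(0,r/\varepsilon)}|\nabla\tilde v_{\varepsilon,\xi}(z)|^{2}\,dz=\int_{B(0,r)}|\nabla_{y}(w\circ\exp_{\xi})(y)|^{2}\,dy\le C\int_{B_{g}(\xi,r)}|\nabla_{g}w|^{2}\,d\mu_{g}\le C\varepsilon^{2\vartheta},
\]
and taking square roots gives the second estimate.

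There is no genuine obstacle here: the argument is just Lemma \ref{lem:e3} followed by a rescaling, the $\varepsilon$-powers coming out correctly because the Jacobian of $z\mapsto\varepsilon z$ is $\varepsilon^{2}$ in dimension $n=2$ while the gradient rescales by a single factor of $\varepsilon$. The only point deserving a line of care is the uniformity in $\xi$ of the constants comparing the Euclidean and Riemannian quantities in normal coordinates; this follows from the compactness of $M$ together with $r$ being below the injectivity radius, and is the same input already used, in its $\varepsilon$-expanded form, in (\ref{eq:espg1})--(\ref{eq:espg2}) and in the proof of Proposition \ref{prop:Zi}. (The exponents displayed are those for $n=2$; the corresponding estimates for $n=3$ are the ones carried out in the appendix of \cite{GMP}.)
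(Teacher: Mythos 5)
Your proposal is correct and follows essentially the same route as the paper: apply Lemma \ref{lem:e3} with $\varphi=0$ to get $\Vert\Psi(W_{\varepsilon,\xi})\Vert_{g}\leq C\varepsilon^{\vartheta}$, then rescale in normal coordinates, where the Jacobian $\varepsilon^{2}$ produces the loss of one power of $\varepsilon$ for the $L^{2}$ norm while the factor $\varepsilon$ from the chain rule exactly cancels it for the gradient. The only cosmetic difference is that the paper packages the change of variables as a single lower bound $\Vert\Psi(W_{\varepsilon,\xi})\Vert_{g}^{2}\geq C\bigl(\vert\nabla\tilde{v}_{\varepsilon,\xi}\vert_{L^{2}}^{2}+\varepsilon^{2}\vert\tilde{v}_{\varepsilon,\xi}\vert_{L^{2}}^{2}\bigr)$, whereas you treat the two terms separately; the uniformity in $\xi$ of the metric comparison, which you rightly flag, is the same compactness input in both.
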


\begin{proof}
After a change of variables we have that
\begin{multline*}
\int_{B_{g}(\xi,r)}|\nabla\Psi(W_{\varepsilon,\xi})|^{2}+|\Psi(W_{\varepsilon
,\xi})|^{2}d\mu_{g}\\
=\varepsilon^{2}\int_{B(0,r/\varepsilon)}|g_{\xi}(\varepsilon z)|^{1/2}\left(
\sum_{ij}g_{\xi}^{ij}(\varepsilon z)\frac{1}{\varepsilon^{2}}\frac
{\partial\tilde{v}_{\varepsilon,\xi}(z)}{\partial z_{i}}\frac{\partial
\tilde{v}_{\varepsilon,\xi}(z)}{\partial z_{i}}+\tilde{v}_{\varepsilon,\xi
}^{2}(z)\right)  dz.
\end{multline*}
Thus
\[
\Vert\Psi(W_{\varepsilon,\xi})\Vert_{g}^{2}\geq C(\left\vert \nabla\tilde
{v}_{\varepsilon,\xi}\right\vert _{L^{2}(\mathbb{R}^{3})}^{2}+\varepsilon
^{2}\left\vert \tilde{v}_{\varepsilon,\xi}\right\vert _{L^{2}(\mathbb{R}^{3}%
)}^{2}).
\]
This, combined with Lemma \ref{lem:e3}, gives
\[
\left\vert \nabla\tilde{v}_{\varepsilon,\xi}\right\vert _{L^{2}(\mathbb{R}%
^{3})}+\varepsilon\left\vert \tilde{v}_{\varepsilon,\xi}\right\vert
_{L^{2}(\mathbb{R}^{3})}\leq C\varepsilon^{\vartheta},
\]
as claimed.
\end{proof}

\bigskip

\end{document}